\newtheorem{theorem}{Theorem}[section] 
\newtheorem{lemma}[theorem]{Lemma}     
\newtheorem{corollary}[theorem]{Corollary}
\newtheorem{proposition}[theorem]{Proposition}
\newtheorem{example}[theorem]{Example}
\newtheorem{definition}[theorem]{Definition}
\newtheorem{remark}[theorem]{Remark}
\newtheorem{notation}[theorem]{Notation}
\newtheorem*{theorem*}{Theorem}
\begin{document}

\title{Generalized universal series}
\author{S. Charpentier, A. Mouze, V. Munnier}
\address{St\'ephane Charpentier, 
Laboratoire d'Analyse, Topologie, Probabilit\'es, UMR 7353, Aix-Marseille Universit\'e, Technop\^ole Ch\^ateau-Gombert, CMI, 39 rue F. Joliot Curie,
13453 Marseille Cedex 13 FRANCE}
\email{stephane.charpentier1@univ-amu.fr}
\address{Augustin Mouze, Laboratoire Paul Painlev\'e, UMR 8524, Current address: \'Ecole Centrale de
Lille, Cit\'e Scientifique, CS20048, 59651 Villeneuve d'Ascq cedex}
\email{Augustin.Mouze@math.univ-lille1.fr}
\address{Vincent Munnier, \'Ecole Centrale de
Lille, Cit\'e Scientifique, CS20048, 59651 Villeneuve d'Ascq cedex}
\email{Vincent.Munnier@ec-lille.fr}
\keywords{universal series, approximations by polynomials, Taylor series}
\subjclass[2010]{30K05, 40A05, 41A10, 47A16}

\begin{abstract} We unify the recently developed abstract theories of universal series and extended universal series 
to include sums of the form $\sum_{k=0}^n a_k x_{n,k}$ for given sequences of vectors $(x_{n,k})_{n\geq k\geq 0}$ 
in a topological vector space $X$. The algebraic and topological genericity as well as the spaceability are discussed. Then we provide various examples of such \emph{generalized} universal series which do not proceed from the classical theory. In particular, we build universal series involving Bernstein's polynomials, we obtain a universal series version of MacLane's Theorem, and we extend a result of Tsirivas concerning  universal Taylor series on simply connected domains, exploiting Bernstein-Walsh quantitative approximation theorem. 
\end{abstract}

\maketitle

\section{Introduction} In 1914, Fekete proved that there exists a formal Taylor series on $[-1,1]$ whose subsequences of partial sums approximate uniformly any function continuous on $[-1,1]$ which vanishes at $0$ \cite{Fekete-Pal}. Since then, several results of this 
type have appeared. In 1996, Nestoridis provided renewed vigor to the research on this kind of maximal divergence in \cite{Nes} by 
showing that there exists a power series $\sum_{k\geq 0}a_kz^k$ of radius of convergence $1$ such that for any compact set $K\subset \mathbb{C}\setminus \mathbb{D}$ (where $\mathbb{D}=\{z\in\mathbb{C};\ \vert z\vert<1\}$) with connected complement, and any function $h$ continuous on $K$ and holomorphic in the interior of $K$, there exists an increasing sequence $\left(\lambda _n\right)_n\subset \mathbb{N}$ such that
\[
\sup_{z\in K}\left\vert\sum_{k=0}^{\lambda _n}a_kz^k-h(z)\right\vert\rightarrow 0,\hbox{ as }n\rightarrow +\infty.
\]
These approximating series are now referred to as \emph{universal series}. A lot of other universal series appeared in different context (Faber series, Laurent series, Jacobi series...). We refer the reader to the excellent surveys \cite{GE} and \cite{Kah}.

\smallskip{}
The theory of universal series is a part of the theory of universality in operator theory which consists in the study of the orbit of a vector under the action of a sequence of operators. In fact universal series and hypercyclic operators constitute for now the most important instances of universality. The abstract theories of hypercyclicity and universal series respectively are now rather well-known (see \cite{BAY-MATH,bgnp} and the references therein) and provide a very efficient tool to exhibit new instances of hypercyclicity or universal series. But in a lot of situations we have to deal with objects which cannot be handled by these theories. For example we can be interested in studying the universal properties of the Ces\`aro means of the sequence of the iterates of a hypercyclic operator $T$ (\cite{LeonF}) or in studying the averaged sums 
$\left(\frac{1}{n}\sum_{k=0}^na_kz^k \right)_n .$ Yet these new objects cannot be considered from the point of view of hypercyclicity or universal series any more. 
Furthermore the previous abstract theory of universal series cannot provide a tool to study the action of summability processes on universal series. A very abstract theory of universality has been developed in 
\cite{GE1} and in Part C of \cite{bgnp} in order to provide criteria for universality and a description of the set of universal vectors. The point of view is very general and, so far, it was only used in the context of both universality and universal series in \cite{m}.

The purpose of this paper is to generalize the notion of classical universal series and to provide, like \cite{bgnp} for universal series, a rather complete abstract theory of \emph{generalized universal series} in order to significantly enlarge the class of universal objects which can be understood in a systematic way. Our theory is based on the more general under-exploited result of \cite[Part C]{bgnp}. We introduce the following notion of \emph{generalized universal series}. Given two convenient Fr\'echet spaces $E$ and $X$, a sequence $\left(e_k\right)_k\subset E$ and a double-indexed sequence $\left(x_{n,k}\right)_{n\geq k\geq 0}\subset X$ we say that $f\in E$ is a \emph{generalized universal series} if there exists $\left(a_k \right)_k\subset \mathbb{K}^{\mathbb{N}}$ ($\mathbb{K}=\mathbb{R}$ or $\mathbb{C}$) such that $f=\sum_k a_ke_k \in E$ and the set $\left\{\sum_{k=0}^na_kx_{n,k},\,n\geq k\geq 0\right\}$ is dense in $X$. This definition differs from that of a classical universal series by the only fact that a simple sequence $\left(x_{n}\right)_{n\geq 0}$ is replaced by a double-indexed sequence $\left(x_{n,k}\right)_{n\geq k\geq 0}$ but we will see that this modification is significant. When $E=\mathbb{K}^{\mathbb{N}}$ we will refer to a generalized universal series as a \emph{formal} generalized universal series.

\smallskip{}
Up to now, very specific results have been provided towards this direction. Hadjiloucas \cite{hadji} defined \emph{extended universal series} by requiring the set $\left\{\frac{1}{\phi(n)}\sum_{k=0}^na_kx_k,\,n\geq 0\right\}$ to be dense in $X$, where $\phi$ is an increasing function converging in $(0,+\infty]$. He extended the abstract results obtained for classical universal series in \cite{bgnp} but all his examples settled in $\mathbb{R}^{\mathbb{N}}$ or $\mathbb{C}^{\mathbb{N}}$ and directly came from classical universal series. In 2012, Tsirivas \cite{tsirivas} continued the study of \cite{hadji} by producing the first examples of extended universal Taylor series in simply connected domains. In particular, he linked the asymptotic behavior of $\phi$ to the existence of function $f$ holomorphic on a simply connected domain $\Omega$ such that any entire function is the uniform limit on every compact subset in $\Omega^c$, with connected complement, of a subsequence of $\left(\frac{1}{\phi(n)}\sum_{k=0}^na_k\left(z-\xi \right)^k\right)_n$, where $a_k=f^{(k)}(\xi)/k!$, $\xi \in \Omega$. Now, this essentially reduces to study generalized universal series where $x_{n,k}=\frac{x_k}{\phi(n)}$ where $\left(x_k \right)_k$ is a sequence in $X$. We also mention that Melas and Nestoridis exhibited large classes of power series which are universal and whose Ces\`aro means are also universal \cite{melanes1} (more general summability processes are considered). Notice that we unify all the previous results with our formalism.

\smallskip{}
In this paper we characterize the existence of generalized universal series in terms of easy-to-check approximating lemmas and, using \cite[Theorem 27]{bgnp} we obtain a description of the set of generalized universal series in terms of genericity ($G_{\delta}$-dense and algebraic genericity). We also adapt a general criterion given in \cite{CMM} for the spaceability of these sets, i.e. the existence of a closed infinite dimensional subspace of $E$ consisting of generalized universal series. When $E=\mathbb{K}^{\mathbb{N}}$ the existence of formal generalized universal series reduces to a very simple criterion and the spaceability can be characterized, extending \cite[Theorem 4.1]{CMM} (see below for more details). The results contained in \cite{hadji} - in which the spaceability was not discussed - then appear as a very particular case.
The rest of the paper is dedicated to examples of generalized universal series which are connected to classical phenomena of convergence or universality but which follow 
from neither hypercyclicity nor classical universal series.

The organization of the paper is as follows: Section \ref{S2} deals with the above mentioned abstract theory of generalized universal series. In Section \ref{ExGe}, we provide the first results about the genericity of the set of extended universal series in the sense of \cite{hadji}, in settings different from $\mathbb{R}^{\mathbb{N}}$ or $\mathbb{C}^{\mathbb{N}}$. A simple idea to produce potentially generalized universal series consists in considering partial sums of the form $\sum_{k=0}^n a_k\alpha _{n,k}x_k$, where $\left(\alpha _{n,k}\right)_{n\geq k\geq 0}$ is a sequence in $\mathbb{K}=\mathbb{R}\hbox{ or }\mathbb{C}$, and where the sequence $\left(x_k\right)_k$ induces classical universal series of the form $\sum_{k=0}^na_kx_k$. In particular, it is motivated by the natural question: how much or in which manner can we perturb a classical universal series in order to keep a universal object (to get a generalized universal series)? In Section \ref{ExGe}, we derive from Fekete's Theorem that, there exists a function $f\in \mathcal{C}^{\infty}$ such that every continuous functions on $\mathbb{R}$ which vanishes at $0$ can be uniformly approximated on 
each compact subset of $\mathbb{R}$ by some partial sums of the form 
$\sum_{k=0}^n\frac{f^{(k)}(0)}{k!}\alpha _{n,k}x^k,$ provided that the sequence $(\alpha_{n,k})_n$ is convergent for every $k\geq 0.$ 
We prove in an abstract fashion how the algebraic and topological genericity, as well as the spaceability, are preserved under such a perturbation. In the same section, we also build generalized universal series with respect to a sequence $\left(x_{n,k}\right)_{n\geq k\geq 0}$ of 
Bernstein's type polynomials (e.g. ${n\choose k}x^k(1-x)^{n-k}$, $n\geq k\geq 0$). We recall that 
this sequence is associated to any continuous function $g$ on $[0,1]$ and the sums 
$\sum_{k=0}^n{n\choose k}g(k/n)x^k(1-x)^{n-k}$ converge to $g(x)$ uniformly on $[0,1]$ 
as $n\rightarrow +\infty$ \cite{korovkin}. Thus, these Bernstein generalized universal series come in a natural way. 
Such a simple example is again specific to the formalism of generalized universal series and do not proceed from the classical theory. 

In every instance of classical universal series, the main idea is that whatever can be approximated by polynomials can also be approximated by some partial sums of a series. Then the notion of universal series cannot be but connected to approximation theory. Nevertheless, non-quantitative approximation theorems such as Weierstrass Theorem or Mergelyan Theorem were sufficient to construct classical universal series. In \cite{tsirivas}, the author used Bernstein's Walsh quantitative approximation theorem to build the extended universal series mentioned above. We improve his result by showing the existence of generalized universal series induced by a sequence $\left(x_{n,k}\right)_{n\geq k\geq 0}$ of the form $\left(\alpha_{n,k}z^k\right)_{n\geq k\geq 0}$. With such a kind of ``perturbations'', the situation is technically more involved. We obtain a result of the following type (see Section \ref{SectionBernstein}):
\begin{theorem*} Let $\left(\alpha_{n,k}\right)_{n\geq k\geq 0}$ be non-zero complex numbers. Denote by $\mathcal{U}_{\alpha}(\mathbb{D})$ (where $\mathbb{D}$ stands for the open unit disk) the set 
of holomorphic functions $f\in \mathcal{H}\left({\mathbb{D}}\right)$ such that, 
for every compact subset $K\subset \mathbb{C}$, $K\cap \mathbb{D}=\emptyset$, $K^c$ connected, every function $h$ continuous on $K$ and holomorphic in the interior of $K$, there exists a sequence $\left(\lambda _n\right)_n$ such that 
$\sum_{k=0}^{\lambda _n}\alpha_{n,k}\frac{f^{(k)}\left(0 \right)}{k!}z^k\rightarrow h(z),$ uniformly on $K,$ as $n\rightarrow +\infty.$ The following assertions hold:
\begin{enumerate}\item If the sequence $\left(\alpha_{n,k}\right)_{n}$ is convergent in $\mathbb{C}$ for every $k\geq 0$ and if we have
\begin{equation*}\limsup\left(\min _{0\leq k\leq n}\left\{\sqrt[n]{\left\vert \alpha _{n,k}\right\vert }\right\}\right)\geq 1,\end{equation*}
then $\mathcal{U}_{\alpha}(\mathbb{D})\ne\emptyset$;
\item If we have $\mathcal{U}_{\alpha}(\mathbb{D})\ne\emptyset$ then
\begin{equation*}\limsup\left(\max _{0\leq k\leq n}\left\{\sqrt[n]{\left\vert \alpha _{n,k}\right\vert }\right\}\right)\geq 1.\end{equation*}
\end{enumerate}
\end{theorem*}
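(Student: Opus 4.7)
\emph{Strategy.} I will prove (1) and (2) separately. For (1), I invoke the abstract generalized-universal-series criterion of Section~\ref{S2} applied with $E=\mathcal{H}(\mathbb{D})$, $e_k(z)=z^k$, and $x_{n,k}=\alpha_{n,k}z^k$ viewed in the natural Fr\'echet space $X$ built from $A(K)$ over an exhausting family of admissible compacts $K\subset\mathbb{C}\setminus\mathbb{D}$ (connected complement); the hypothesis that $(\alpha_{n,k})_n$ converges for each fixed $k$ is used exactly to put the problem within the scope of Section~\ref{S2}. The criterion reduces the non-emptiness of $\mathcal{U}_\alpha(\mathbb{D})$ to the following approximation property: for every such $K$, every $h\in A(K)$, every $\epsilon>0$, every $N\geq 0$, and every $J\geq 1$ coding the seminorm $\|\cdot\|_J=\sup_{|z|\leq 1-1/J}|\cdot|$ on $\mathcal{H}(\mathbb{D})$, there exist $n\geq N$ and $(b_k)_{k=0}^n$ with $b_k=0$ for $k<N$ satisfying
\[
\Bigl\|h-\sum_{k=0}^n b_k\alpha_{n,k}z^k\Bigr\|_K<\epsilon\quad\text{and}\quad\Bigl\|\sum_{k=0}^n b_kz^k\Bigr\|_J<\epsilon.
\]

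\emph{Verification.} Fix $(K,h,\epsilon,N,J)$, set $\rho=1-1/J$, pick $r\in(\rho,1)$, and put $L=K\cup r\overline{\mathbb{D}}$ (whose complement remains connected since $r\overline{\mathbb{D}}\subset\mathbb{D}\subset K^c$). Apply Mergelyan to get an entire polynomial $\tilde P$ with $\|h-\tilde P\|_K<\epsilon/3$. Then $g:=\tilde P$ on a neighborhood of $K$ and $g:=0$ on a neighborhood of $r\overline{\mathbb{D}}$ is holomorphic near $L$, so Bernstein--Walsh produces polynomials $P_m(z)=\sum_{k=0}^m c_kz^k$ of degree $m$ with $\|g-P_m\|_L\leq C\tau^m$, where $\tau=\tau(K,r)\in(0,1)$ depends only on the geometry of $L$ (the inverse of the Green's-function level at which the components of $L$ merge). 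Thus $\|h-P_m\|_K\leq\epsilon/3+C\tau^m$, $\|P_m\|_{r\overline{\mathbb{D}}}\leq C\tau^m$, and $|c_k|\leq C\tau^m/r^k$ by Cauchy's estimates. Now pick $\delta\in(0,1-\tau)$; the hypothesis yields arbitrarily large $n$ with $|\alpha_{n,k}|\geq(1-\delta)^n$ for $k\leq n$. Set $b_k=c_k/\alpha_{n,k}$ for $N\leq k\leq m$ and $b_k=0$ otherwise. Then $\sum_{k=0}^n b_k\alpha_{n,k}z^k=P_m(z)-\sum_{k=0}^{N-1}c_kz^k$ on $K$, where the truncated part is $O(\tau^m)$ on $K$ and hence absorbable in $\epsilon$, while
\[
\Bigl\|\sum_{k=0}^n b_kz^k\Bigr\|_J\leq(1-\delta)^{-n}\sum_{k=N}^m|c_k|\rho^k\leq\frac{C\tau^m(1-\delta)^{-n}}{1-\rho/r}.
\]
Since $\tau<1-\delta$, the gap $\log(1/\tau)-\log(1/(1-\delta))>0$ is strictly positive, so for every $n$ large enough in the good subsequence one may pick $m\leq n$ making both $C\tau^m<\epsilon/6$ and $C\tau^m(1-\delta)^{-n}/(1-\rho/r)<\epsilon/2$. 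This verifies the approximation property.

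\emph{Proof of (2) and main obstacle.} For (2), assume $q:=\limsup_n\max_{0\leq k\leq n}|\alpha_{n,k}|^{1/n}<1$; fix $s\in(q,1)$, $R\in(1,s/q)$, take $K=\{R\}$ (so $K\cap\mathbb{D}=\emptyset$ and $K^c$ is connected) and $h\equiv 1$. For any $f\in\mathcal{H}(\mathbb{D})$ with Taylor coefficients $a_k$, Cauchy's estimate gives $|a_k|\leq M_s s^{-k}$, hence for every $n$ beyond some $N_0$
\[
\Bigl|\sum_{k=0}^n\alpha_{n,k}a_kR^k\Bigr|\leq q^n M_s\sum_{k=0}^n(R/s)^k\leq C'(qR/s)^n\xrightarrow[n\to\infty]{}0,
\]
since $qR/s<1$; no subsequence can converge to $h(R)=1$, whence $\mathcal{U}_\alpha(\mathbb{D})=\emptyset$. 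The real technical obstacle lies in (1): one must simultaneously arrange $m\leq n$ and $m\gtrsim n\log(1/(1-\delta))/\log(1/\tau)$, which requires the strict inequality $\tau<1-\delta$—hence the freedom to shrink $\delta$ granted by the hypothesis $\limsup\min|\alpha_{n,k}|^{1/n}\geq 1$. This is the precise point where the quantitative Bernstein--Walsh rate, and not merely a qualitative Mergelyan-type statement, is genuinely used.
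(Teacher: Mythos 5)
Your overall strategy coincides with the paper's: reduce to the approximation criterion of Theorem \ref{thm-gen-approx-lem} (Part B, over the countable family of compacts from Lemma \ref{lem-geo-melanes}), apply Bernstein--Walsh to the union of a fattened copy of $K$ with a closed disc inside $\mathbb{D}$ carrying the function that is $h$ near $K$ and $0$ near the disc, divide the coefficients by $\alpha_{n,k}$, and balance the geometric decay rate $\tau^{n}$ against the allowed growth $(1-\delta)^{-n}$; your part (2) is the same Cauchy-estimate argument as Proposition \ref{propopt}, run in the contrapositive. Your one genuine deviation is harmless and arguably cleaner: where the paper controls $\sup_{L}\vert Q\vert$ via the Hardy-space Bernstein inequality of Lemma \ref{prop-Bern-Ineq}, you get the same bound (up to the constant) from the Cauchy coefficient estimates $\vert c_k\vert\leq C\tau^m/r^k$ and the geometric series $\sum(\rho/r)^k$.

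There is, however, one concrete gap in part (1): you apply Bernstein--Walsh directly to $L=K\cup r\overline{\mathbb{D}}$, but Theorem \ref{BWT} requires the compact set to be \emph{regular for the Dirichlet problem at infinity}, and an arbitrary compact $K$ with connected complement need not be regular --- take $K=\{2\}$, or any compact with an isolated point or an irregular boundary point; then $L$ has no continuous Green function vanishing on $L$, and the statement you invoke (geometric rate $\tau^m$ with $\tau<1$ governed by ``the Green's-function level at which the components merge'') is not available as stated. This is exactly why the paper first replaces $K$ by a compact $K'\supset K$ with connected complement, finitely many components and $\mathcal{C}^\infty$ boundary (built from polygonal Jordan curves inside $K_\eta\setminus K$), and applies Bernstein--Walsh to $K'\cup L'$; approximation on $K'$ then restricts to $K$. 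Your argument needs this fattening step (or some other regularization of $L$) to be complete; once inserted, the rest of your quantitative bookkeeping --- taking $m=n$ along the subsequence where $\vert\alpha_{n,k}\vert\geq(1-\delta)^{-n}$ fails, i.e.\ $\vert\alpha_{n,k}\vert\geq(1-\delta)^{n}$, and using $\tau<1-\delta$ --- is correct and matches the paper's proof of Theorem \ref{gen-Tayl-series}.
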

In the case $\alpha_{n,k}=1/\phi(n)$ where $\phi(n)$ converges in $\mathbb{C}\setminus\{0\}\cup\{\infty\},$ 
the above result, in an equivalent form, was proven by Tsirivas in \cite{tsirivas}. Our result is more general and our proof is direct since we are using a precise version of Bernstein-Walsh's Theorem (see 
Theorem \ref{BWT}). The proof of this result shows that having approximating polynomials is no more sufficient to construct generalized universal series, but we also need a control on the degree of the approximating polynomials involved in the construction of the series. In addition, this control is also connected to the way in which the perturbation $\phi$ grows along some of its subsequences. This connection clearly appears in the proof of this theorem. This is the purpose of Section \ref{SectionBernstein}.

\smallskip{}
Some examples of generalized universal series also highlight other strange behaviors. In Section \ref{deriv-Section5}, we construct functions $f$ holomorphic around $0$ which are universal in the sense that the set $\left\{\alpha_n S_n\left(f^{(n)}\right),\,n\in \mathbb{N}\right\}$ is dense in $\mathcal{H}\left(\mathbb{C}\right)$, where $S_n(g)$ stands for the $n$-th partial sum of the Taylor expansion of $g$ at $0$ and $(\alpha_n)_n$ is a sequence of non zero complex numbers. When $\alpha _n=1$ for every $n$, it reminds MacLane's result concerning the hypercyclicity of the differentiation operator in $\mathcal{H}(\mathbb{C})$. In general, we relate in a precise way the radius of convergence of the Taylor expansion of $f$ at $0$ to the asymptotic behavior of the sequence $(\alpha_n)_n$.

\section{Abstract theory of generalized universal series}\label{S2}
\subsection{Background on universality}\label{S2S1}
The first part of this subsection is mostly inspired by Part C of \cite{bgnp} and Section 3 of \cite{CMM}. We will recall the formalism introduced in \cite{bgnp}.

Let $\mathbb{K}=\mathbb{R}$ or $\mathbb{C}$. Let $Y$ be a separable complete metrizable topological vector space (over $\mathbb{K}$) and $Z$ a metrizable topological vector space (over $\mathbb{K}$), whose topologies are induced by translation-invariant metrics $d_Y$ and $\varrho$, respectively. Let $M$ be a separable closed subspace of $Z$. Let $L_n:Y\rightarrow Z$, $n\in \mathbb{N}$, be continuous linear mappings.

\begin{definition}{\rm With the above notations, we say that $y\in Y$ is universal with respect to $\left(L_n\right)_n$ and $M$ if
\[
M\subset \overline{\left\{L_ny:\,n\in \mathbb{N}\right\}}.
\]
We denote by $\mathcal{U}\left(L_n,M\right)$ the set of such universal elements.}
\end{definition}

We make the following assumption.
\\
\textbf{Assumption (I).} There exists a dense subset $Y_0$ of $Y$ such that, for any $y\in Y_0$, $\left(L_ny\right)_n$ converges to an element in $M$.

We recall the following result from \cite{bgnp}.
\begin{theorem}\label{thms-27-and-28} (1) \emph{(}\cite[Theorems 26 and 27]{bgnp}\emph{)} Under Assumption (I), the following are equivalent.
\begin{enumerate}[(i)]
\item $\mathcal{U}\left(L_n,M\right)\neq \emptyset$;
\item For any open subset $U\neq \emptyset$ of $Y$ and any open subset $V\neq \emptyset$ of $Z$ with $V\cap M\neq \emptyset$ there is some $n\in \mathbb{N}$ with $L_n(U)\cap V\neq \emptyset$;
\item For every $z\in M$ and $\varepsilon>0$, there exist $n\geq 0$ and $y\in Y$ such that
\[
\varrho\left(L_ny,z\right)<\varepsilon\mbox{ and }d_Y\left(y,0\right)<\varepsilon;
\]
\item $\mathcal{U}\left(L_n,M\right)$ is a dense $G_{\delta}$ subset of $Y$.
\end{enumerate}
(2) \emph{(}\cite[Theorem 28 (1)]{bgnp}\emph{)} If, for every increasing sequence $\left(\mu_n\right)_n\subset \mathbb{N}$, $\mathcal{U}\left(L_{\mu_n},M\right)$ is non-empty, then $\mathcal{U}\left(L_n,M\right)$ contains, apart from $0$, a dense subspace of $Y$.
\end{theorem}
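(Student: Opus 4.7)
I would run the standard cycle (iv)$\Rightarrow$(i)$\Rightarrow$(ii)$\Rightarrow$(iii)$\Rightarrow$(iv). The implication (iv)$\Rightarrow$(i) is immediate since $Y$ is Baire. For (i)$\Rightarrow$(ii), I would first observe that Assumption (I) implies that $\mathcal{U}(L_n,M)$ is invariant under translation by any $y_0\in Y_0$: if $y\in\mathcal{U}(L_n,M)$ and $L_ny_0\to\ell(y_0)\in M$, then $\{L_n(y+y_0)\}=\{L_ny+L_ny_0\}$ is asymptotically a translate of $\{L_ny\}$ by $\ell(y_0)\in M$, which preserves density in $M$. Since $Y_0$ is dense in $Y$, the existence of one universal vector forces $\mathcal{U}(L_n,M)$ to be dense; then for any open $U$ we pick universal $y\in U$ and use density of $\{L_ny\}$ in $M$ to meet any $V$ with $V\cap M\neq\emptyset$. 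The step (ii)$\Rightarrow$(iii) is just choosing $U=\{y:d_Y(y,0)<\varepsilon\}$ and $V=\{w:\varrho(w,z)<\varepsilon\}$.

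\textbf{The main work is (iii)$\Rightarrow$(iv).} Since $M$ is separable, fix a countable dense set $\{z_j\}\subset M$ and write
\[
\mathcal{U}(L_n,M)=\bigcap_{j\geq 0}\bigcap_{k\geq 1}\bigcup_{n\geq 0}L_n^{-1}\bigl(\{w\in Z:\varrho(w,z_j)<1/k\}\bigr),
\]
which exhibits it as a $G_\delta$ by continuity of the $L_n$'s. To apply Baire's theorem in the Polish space $Y$, it suffices to show each open set $A_{j,k}:=\bigcup_nL_n^{-1}(B_\varrho(z_j,1/k))$ is dense. Given $U\subset Y$ open, choose $y_0\in U\cap Y_0$ and $\delta>0$ with $B_{d_Y}(y_0,\delta)\subset U$, and set $\ell:=\lim_nL_ny_0\in M$. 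Applying (iii) to $z_j-\ell\in M$ with a parameter $\eta>0$ gives $n_1$ and $y_1$ with $d_Y(y_1,0)<\eta$ and $\varrho(L_{n_1}y_1,z_j-\ell)<\eta$; by translation-invariance of $\varrho$,
\[
\varrho(L_{n_1}(y_0+y_1),z_j)\leq \varrho(L_{n_1}y_1,z_j-\ell)+\varrho(L_{n_1}y_0,\ell).
\]
The delicate point is forcing the second term below $1/k$, since (iii) does not directly produce a large index $n_1$. The trick is to apply (iii) with $\eta=1/m$ for $m\to\infty$: the resulting indices $n_1=n_1(m)$ must be unbounded whenever $z_j-\ell\neq 0$, for otherwise a subsequential diagonal gives a fixed $n$ with $L_n0=0$ arbitrarily close to $z_j-\ell\neq 0$, a contradiction. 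One can therefore select $n_1$ as large as needed to make $\varrho(L_{n_1}y_0,\ell)<1/(2k)$ while keeping $y_1$ in $B_{d_Y}(0,\delta)$. The degenerate case $z_j=\ell$ is handled by choosing $y_1=0$ and any large $n_1$. This yields $y_0+y_1\in U\cap A_{j,k}$.

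\textbf{Plan for part (2).} Applying part (1) to every increasing sequence $(\mu_n)$ shows that $\mathcal{U}(L_{\mu_n},M)$ is a dense $G_\delta$ in $Y$. The goal is now to produce a dense linear subspace $F\subset Y$ with $F\setminus\{0\}\subset\mathcal{U}(L_n,M)$. I would use the standard inductive Baire construction: fix a countable dense set $(w_j)_{j\geq 1}$ in $Y$, and build vectors $v_j\in Y$ with $d_Y(v_j,w_j)<1/j$ such that every non-zero finite rational linear combination $\sum_{j=1}^N q_jv_j$ lies in $\mathcal{U}(L_n,M)$. At step $N$, having selected $v_1,\dots,v_{N-1}$, I would note that for each non-zero rational tuple $(q_1,\dots,q_N)$ the map $v\mapsto\sum_{j<N}q_jv_j+q_Nv$ is an affine homeomorphism of $Y$, so the preimage of $\mathcal{U}(L_n,M)$ is again a dense $G_\delta$; intersecting over the countable family of rational tuples and with $B(w_N,1/N)$ keeps the result non-empty by Baire. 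Any $v_N$ in this intersection works. The $\mathbb{Q}$-linear span of $(v_j)_j$ is then dense and contained (apart from $0$) in $\mathcal{U}(L_n,M)$, and one extends to a $\mathbb{K}$-linear subspace using that $\mathcal{U}(L_{\mu_n},M)$ is preserved by non-zero scalar multiplication (invoking (1) applied to scalar multiples of the $L_n$), or directly by enlarging the countable family of combinations to include rationals in a dense countable subfield of $\mathbb{K}$. The principal obstacle throughout is bookkeeping the countable families correctly so that Baire applies cleanly at each inductive step.
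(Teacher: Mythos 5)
The paper does not actually prove this statement: it is quoted from \cite[Theorems 26, 27 and 28]{bgnp}, so there is no in-paper proof to compare against, and your proposal has to be judged on its own. Your part (1) is correct and is the standard Baire-category argument: the translation trick showing $y+Y_0\subset\mathcal{U}(L_n,M)$ for (i)$\Rightarrow$(ii), the representation $\mathcal{U}(L_n,M)=\bigcap_{j,k}\bigcup_n L_n^{-1}(B_\varrho(z_j,1/k))$, and the observation that the indices produced by (iii) must be unbounded when $z_j\neq\ell$ (else continuity of a fixed $L_n$ at $0$ forces $z_j-\ell=0$), which lets you absorb the error term $\varrho(L_{n_1}y_0,\ell)$, are all sound.

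Part (2) has a genuine gap. Your inductive construction only controls the countably many rational (or countable-subfield) coefficient tuples, so what it delivers is a dense $\mathbb{Q}$-linear (resp.\ $F$-linear) set of universal vectors, not a dense $\mathbb{K}$-linear subspace, which is what ``dense subspace of $Y$'' means here. Neither of your proposed repairs closes this. Invariance of $\mathcal{U}(L_n,M)$ under multiplication by a nonzero scalar only rescales a given combination: writing $\sum_j\lambda_jv_j=\lambda_1\bigl(v_1+\sum_{j\geq 2}(\lambda_j/\lambda_1)v_j\bigr)$ still leaves arbitrary ratios $\lambda_j/\lambda_1\in\mathbb{K}$, which your induction never treated. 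And enlarging $\mathbb{Q}$ to a countable dense subfield of $\mathbb{K}$ still yields only an $F$-subspace; one cannot then pass to $\mathbb{K}$-tuples by approximation, because universality is a $G_\delta$ condition, not an open one.

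The telltale symptom is that your argument for (2) never uses the hypothesis that $\mathcal{U}(L_{\mu_n},M)\neq\emptyset$ for \emph{every} increasing sequence $(\mu_n)$ --- if the argument were complete, the conclusion would follow from (i) alone, whereas Theorem 28 of \cite{bgnp} is stated precisely with this stronger hypothesis. That hypothesis is what upgrades topological transitivity to the mixing property for $M$ (Proposition \ref{munmixing} of the present paper): for each pair of open sets $(U,V)$ with $V\cap M\neq\emptyset$, \emph{all sufficiently large} $n$ satisfy $L_n(U)\cap V\neq\emptyset$. This is the mechanism that allows one, at each inductive step, to pick a single large index $n$ (or a finite set of indices) that works simultaneously for a whole compact sphere of coefficient tuples $(\lambda_1,\dots,\lambda_N)\in\mathbb{K}^N$, which is how the uncountability of $\mathbb{K}$ is overcome. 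Without an ingredient of this kind, the passage from rational to arbitrary scalars does not go through.
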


\begin{remark} {\rm When the sequence $\left(L_n\right)_n$ satisfies Condition (ii) in the above theorem, we say that $\left(L_n\right)_n$ is topologically transitive for $M$.}
\end{remark}

\noindent{}In the classical theory of universal series, it is useful to  assume that $\left(L_n\right)_n$ satisfies the following additional assumptions. 
\\
\textbf{Assumption (II).} There exist continuous linear maps $U_n:Y\rightarrow Y$, $n\in \mathbb{N}$, and a continuous map $P:Z\rightarrow Z$ with $P_{|M}=\mbox{id}_M$ such that
\begin{itemize}
\item For any $y\in Y$, $n\in \mathbb{N}$, $L_mU_ny\rightarrow PL_ny$ as $m\rightarrow +\infty$;
\item For any $y\in Y$, if $\left(L_{n_k}\right)_k$ converges to an element in $M$, then $U_{n_k}\rightarrow y$ as $k\rightarrow +\infty$.
\end{itemize}

We use the notations $L_{-1}=U_{-1}=0$. Under these extra assumptions, Theorem \ref{thms-27-and-28} can be improved as follows. We refer to \cite[Theorem 29]{bgnp} and \cite[Remark 27 (a)]{bgnp}.
\begin{theorem}\label{GEthm}Under Assumptions (I) and (II), the following assertions are equivalent.
\begin{enumerate}[(1)]
\item $\mathcal{U}\left(L_n,M\right)\neq \emptyset$;
\item $\mathcal{U}\left(L_n,M\right)$ is a dense $G_\delta$ subset of $Y$;
\item For every $z\in M$ and $\varepsilon>0$, there are $n\geq 0$ and $y\in Y$ such that
\[
\varrho\left(L_ny,z\right)<\varepsilon\quad \mbox{and}\quad d_Y\left(U_ny,0\right)<\varepsilon;
\]
\item For every $z\in M$, $\varepsilon>0$, and $p\geq -1$, there are $n>p$ and $y\in Y$ such that
\[
\varrho\left(L_ny-L_py,z\right)<\varepsilon\quad \mbox{and}\quad d_Y\left(U_ny-U_py,0\right)<\varepsilon;
\]
\item For every increasing sequence $\left(\mu_n\right)_n\subset \mathbb{N}$, $\mathcal{U}\left(L_{\mu_n},M\right)$ is a dense $G_\delta$ subset of $Y$;
\item For every increasing sequence $\left(\mu_n\right)_n\subset \mathbb{N}$, $\mathcal{U}\left(L_{\mu_n},M\right)$ contains, apart from $0$, a dense subspace of $Y$.
\end{enumerate}
\end{theorem}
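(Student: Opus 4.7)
The plan is to build on Theorem \ref{thms-27-and-28}, which already gives $(1) \Leftrightarrow (2)$ (and both equivalent to the topological transitivity condition (iii)) under Assumption (I). What Assumption (II) adds is the ability to recast transitivity in terms of the auxiliary maps $U_n$, yielding conditions (3) and (4), and to propagate universality to every subsequence, yielding (5) and (6). I would organize the proof into two cycles: $(1) \Rightarrow (2) \Rightarrow (5) \Rightarrow (6) \Rightarrow (1)$ for the hereditary and spaceability statements, and $(4) \Rightarrow (3) \Rightarrow (1)$ together with $(1) \Rightarrow (4)$ for the $U_n$-characterizations.

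For the first cycle, $(1) \Rightarrow (2)$ is Theorem \ref{thms-27-and-28}, while $(5) \Rightarrow (2)$ and $(6) \Rightarrow (1)$ are trivial (take $\mu_n = n$). The central step is $(2) \Rightarrow (5)$: given a subsequence $(\mu_n)_n$, Assumption (I) is inherited (same $Y_0$), so Theorem \ref{thms-27-and-28} reduces the task to verifying topological transitivity of $(L_{\mu_n})_n$. For an open $U \subseteq Y$ and $z \in M$, $\varepsilon > 0$, use (2) to pick a universal $y^* \in U$ and a subsequence $(n_k)$ with $L_{n_k} y^* \to z \in M$; the second bullet of Assumption (II) gives $U_{n_k} y^* \to y^*$, placing $U_{n_k} y^*$ in $U$ for $k$ large. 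Fixing such a $k$, the first bullet gives $L_m U_{n_k} y^* \to P L_{n_k} y^*$ as $m \to \infty$, and continuity of $P$ together with $P_{|M} = \mbox{id}_M$ keeps the limit close to $z$; hence $\varrho(L_{\mu_j} U_{n_k} y^*, z) < \varepsilon$ for some $\mu_j$. Then $(5) \Rightarrow (6)$ follows from part (2) of Theorem \ref{thms-27-and-28} applied to $(L_{\mu_n})_n$, since every sub-subsequence is itself a subsequence of $(L_n)_n$ and so has nonempty $\mathcal{U}$ by (5).

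For the second cycle, $(4) \Rightarrow (3)$ is immediate at $p = -1$ using $L_{-1} = U_{-1} = 0$. For $(3) \Rightarrow (1)$: given $z \in M$ and $\varepsilon > 0$, extract $n, y$ from (3) and set $\tilde y := U_n y$; then $d_Y(\tilde y, 0) < \varepsilon$, and by the first bullet $L_m \tilde y = L_m U_n y \to P L_n y$ as $m \to \infty$, which continuity of $P$ keeps close to $z$; so condition (iii) of Theorem \ref{thms-27-and-28} holds and (1) follows. The delicate implication is $(1) \Rightarrow (4)$: given $z, \varepsilon, p$, pick a small-norm universal $y^*$ and a subsequence $(n_k)$ with $n_k > p$ and $L_{n_k} y^* \to z$; by the second bullet $U_{n_k} y^* \to y^*$, so $L_{n_k} y^* - L_p y^*$ and $U_{n_k} y^* - U_p y^*$ tend to $z - L_p y^*$ and $y^* - U_p y^*$ respectively. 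The first is close to $z$ by continuity of $L_p$, but making the second small requires $U_p y^* \approx y^*$, which is the main obstacle: the second bullet of Assumption (II) controls $U_{n_k} y^*$ only along convergent subsequences and not at a fixed index $p$. The fix is to replace $y^*$ by a corrected vector---either $y^* + v$ where $v$ absorbs the $U_p$-component, or a vector built inductively so that both $L_p y$ and $U_p y$ vanish up to $\varepsilon$---tracking the simultaneous evolution of its $L_n$- and $U_n$-images. Once $(1) \Rightarrow (4)$ is secured, both cycles close and the equivalence of (1)--(6) is established.
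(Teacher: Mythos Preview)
The paper does not prove this theorem at all: it simply cites \cite[Theorem~29]{bgnp} and \cite[Remark~27(a)]{bgnp}. So there is no ``paper's own proof'' to compare against, and your proposal is a self-contained attempt.

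Your overall architecture is sound, and the arguments for $(2)\Rightarrow(5)$, $(5)\Rightarrow(6)$, $(4)\Rightarrow(3)$, and $(3)\Rightarrow(1)$ are correct (up to routine $\varepsilon/2$ bookkeeping with the continuity of $P$). The problem is your treatment of $(1)\Rightarrow(4)$: you set up the right argument and then misdiagnose an obstacle that is not there. You pick a universal $y^*$ with $d_Y(y^*,0)$ small (using density from (2)), find $n_k>p$ with $L_{n_k}y^*\to z$, and correctly observe that $U_{n_k}y^*-U_py^*\to y^*-U_py^*$. You then claim that making $y^*-U_py^*$ small ``requires $U_py^*\approx y^*$'', and propose vague corrections. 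But this is wrong: since $y^*$ is small and $U_p$ is continuous and linear, $U_py^*$ is also small, and hence
\[
d_Y\bigl(y^*-U_py^*,0\bigr)\le d_Y(y^*,0)+d_Y(U_py^*,0)
\]
is already small. No fix is needed; your own setup finishes the job once you choose the initial smallness of $y^*$ according to the continuity of both $L_p$ and $U_p$ at $0$. The ``corrected vector'' and ``inductive construction'' you suggest are unnecessary and, as written, not substantiated. Replace that paragraph with the one-line observation above and your proof closes.
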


The question of the spaceability of the set of universal elements has always been considered separately. Actually, it is more involved in general because the condition $\mathcal{U}\left(L_n,M\right)\neq \emptyset$ does not always imply that $\mathcal{U}\left(L_n,M\right)$ is spaceable. As far as we know, the most general criterion for the spaceability of the universal set is Proposition 3.7 in \cite{CMM}, given when $M=Z$. This result naturally extends to any subspace $M\subset Z$. To state this latter, we introduce a condition.

\begin{definition} {\rm With the above notations, assume that $Y$ is a Fr\'echet space. The sequence $\left(L_n\right)_n$ satisfies \emph{Condition (C)} for $M$ if there exist an increasing sequence $\left(n_k\right)_k\subset \mathbb{N}$ and a dense subset $Y_1\subset Y$ such that
\begin{itemize}\item For all $y\in Y_1$, $L_{n_k}y\rightarrow 0,$ as $k\rightarrow +\infty$;
\item For every continuous semi-norm $p$ on $Y$, $M\subset \overline{\cup_{k\geq 0}T_{n_k}\left(\left\{y\in Y:\,p(y)<1\right\}\right)}$.
\end{itemize}}
\end{definition}
Condition (C) has been introduced in \cite{Leon} for sequence of operators between Banach spaces, and extended in \cite{Men2} in the case of Fr\'echet spaces, when $M=Z$. In practice, Condition (C) is not quite easy to handle. In fact, it is closely related to a more common notion, that of mixing sequence of operators for a subspace.

\begin{definition} {\rm With the above notations, we say that the sequence $\left(L_n\right)_n$ is mixing for $M$ if the following condition is satisfied: for any open subset $U\neq \emptyset$ of $Y$ and any open subset $V\neq \emptyset$ of $Z$ with $V\cap M\neq \emptyset$ there is some $n\in\mathbb{N}$ such that $L_n(U)\cap V\neq \emptyset$ for any $n\geq N$.}
\end{definition}

The following immediate proposition (see for example \cite[Remark 27 (b)]{bgnp}) reformulates the notion of mixing sequence of operators for $M$.
\begin{proposition}\label{munmixing} With the above notations, if for every increasing sequence $\left(\mu_n\right)_n\subset \mathbb{N}$, $\mathcal{U}\left(L_{\mu_n},M\right)$ is non-empty, then the sequence $\left(L_n\right)_n$ is mixing for $M$.
\end{proposition}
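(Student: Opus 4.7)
The plan is to argue by contrapositive. Suppose that $(L_n)_n$ fails to be mixing for $M$. Unwinding the definition, there exist a non-empty open set $U\subset Y$, an open set $V\subset Z$ with $V\cap M\ne\emptyset$, and, by extracting those indices $n$ for which $L_n(U)\cap V=\emptyset$, an increasing sequence $(\mu_n)_n\subset \mathbb{N}$ such that $L_{\mu_n}(U)\cap V=\emptyset$ for every $n\in\mathbb{N}$. My goal is to contradict the hypothesis that $\mathcal{U}(L_{\mu_n},M)$ is non-empty for this particular subsequence.

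Next, I would exploit the standing Assumption~(I) together with Theorem~\ref{thms-27-and-28} applied to the subsequence $(L_{\mu_n})_n$. Since Assumption~(I) passes trivially to subsequences (the dense set $Y_0$ and the limits in $M$ are unchanged), and since $\mathcal{U}(L_{\mu_n},M)\ne\emptyset$ by hypothesis, condition~(i) of Theorem~\ref{thms-27-and-28} holds for $(L_{\mu_n})_n$. The implication (i)$\Rightarrow$(iv) of that theorem then yields that $\mathcal{U}(L_{\mu_n},M)$ is a dense $G_\delta$ subset of $Y$. As $U$ is non-empty and open, I can pick some $y_0\in U\cap \mathcal{U}(L_{\mu_n},M)$.

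To finish, a routine orbit argument: by universality of $y_0$, the set $\{L_{\mu_n}y_0:n\in\mathbb{N}\}$ has closure containing $M$, and since $V\cap M\ne\emptyset$ is non-empty and open in $M$, there exists $n_0\in\mathbb{N}$ with $L_{\mu_{n_0}}y_0\in V$. But $y_0\in U$ gives $L_{\mu_{n_0}}y_0\in L_{\mu_{n_0}}(U)\cap V$, contradicting $L_{\mu_{n_0}}(U)\cap V=\emptyset$.

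The only mildly subtle point — and hence the only place where the argument could a priori fail — is that bare non-emptiness of $\mathcal{U}(L_{\mu_n},M)$ would not be enough, since a single universal vector might lie outside $U$. The whole proof relies on upgrading non-emptiness to $G_\delta$-density via Theorem~\ref{thms-27-and-28}, which in turn is where Assumption~(I) enters. This is precisely the reason why the statement can be labelled ``immediate''.
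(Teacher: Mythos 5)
Your argument is correct and is precisely the ``immediate'' argument the paper has in mind (the proof is omitted there, with a pointer to \cite{bgnp}): Assumption (I) passes to subsequences, so Theorem \ref{thms-27-and-28} upgrades non-emptiness of $\mathcal{U}\left(L_{\mu_n},M\right)$ to $G_\delta$-density, which lets you place a universal vector inside $U$ and contradict $L_{\mu_n}(U)\cap V=\emptyset$. The only cosmetic simplification is that you could invoke the equivalence (i)$\Leftrightarrow$(ii) of that theorem directly --- non-emptiness of $\mathcal{U}\left(L_{\mu_n},M\right)$ already gives topological transitivity of $\left(L_{\mu_n}\right)_n$, i.e.\ some $n$ with $L_{\mu_n}(U)\cap V\neq\emptyset$ --- bypassing the density step altogether.
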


A straightforward adaptation of \cite[Proposition 3.5]{CMM} yields the following chains of implications:

\begin{equation*}\hbox{ Mixing for }M\hbox{ }\Longrightarrow\hbox{ }\hbox{ Condition (C) for }M\hbox{ }\Longrightarrow\hbox{ }\hbox{ Topologically transitive for }M.
\end{equation*}

Now, \cite[Theorem 1.11 and Remark 1.12]{Men2} extend to universality for a subspace, using Condition (C) for a subspace, as follows. The proof is, up to very slight changes, quite identical to that of Menet's result, and so it is omitted.

\begin{proposition}\label{closedgene} With the above notations, we assume that $Y$ is a Fr\'echet space with continuous norm. Let $\left(p_n\right)_n$ be a non-decreasing sequence of norms and $\left(q_n\right)_n$ a non-decreasing sequence of semi-norms defining the topologies of $Y$ and $Z$ respectively. It the sequence $\left(L_n\right)_n$ satisfies Condition (C) for $M$ and if there exists a non-increasing sequence of infinite dimensional closed subspaces $\left(M_j\right)_j$ of $Y$ such that for every continuous semi-norm $q$ on $Z$, there exists a positive number $C$, an integer $k\geq 1$ and a continuous norm $p$ on $Y$ such that we have, for any $j\geq k$ and any $x\in M_j$
\[
q\left(L_{n_j}(x)\right)\leq Cp(x),
\]
then $\mathcal{U}\left(L_n,M\right)$ is spaceable.
\end{proposition}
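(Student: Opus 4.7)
The plan is to adapt Menet's construction from \cite[Theorem 1.11 and Remark 1.12]{Men2}, originally proved for $M = Z$ (and also treated in that case in \cite[Proposition 3.7]{CMM}), to the present situation where $M$ is a separable closed subspace of $Z$. The goal is to exhibit a basic sequence $(y_j)_j$ in $Y$ whose closed linear span $\mathcal{M}$ is an infinite-dimensional subspace of $Y$ with $\mathcal{M}\setminus\{0\}\subset \mathcal{U}(L_n,M)$.

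First I would fix a countable dense subset $\{z_m\}_{m\geq 1}$ of $M$ and enumerate approximation tasks $(m,l,j_0)$, each demanding that every non-zero vector $y\in\mathcal{M}$ admit some index $n$ beyond a prescribed stage with $q_l(L_n y - z_m) < 1/l$. By the second bullet of Condition (C) applied to the norm $p_1$, for each task there exist an index $k$ and a vector $u\in Y$ with $p_1(u)$ and $q_l(L_{n_k}u - z_m)$ both arbitrarily small; using the density of $Y_1$ and $L_{n_k}y\to 0$ on $Y_1$, I would further arrange that $L_{n_{k'}}u$ is negligible for indices $k'$ much larger than $k$.

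Then I would build $(y_j)_j$ inductively, setting $y_j = w_j + u_j$. Here $w_j\in M_j$ is selected via a Bessaga--Pelczynski-type argument inside the infinite-dimensional closed subspace $M_j$, with $p_j(w_j)\leq 2^{-j}$ and chosen so that $(y_j)_j$ remains basic with uniformly bounded basis constant; $u_j$ is the vector realizing the $j$-th task, with $p_j(u_j)$ still small enough not to destroy the basic-sequence property. The enumeration is arranged so that every triple $(m,l,j_0)$ recurs infinitely often. For a non-zero $y=\sum_j c_j y_j \in\mathcal{M}$ with first non-zero coefficient $c_{j_0}$, one tests density of the orbit along the subsequence of indices $(n_{k_j})_j$ attached to a chosen task $(m,l,j_0)$: the term $L_{n_{k_j}}(c_j y_j)$ is close to $c_j z_m$ by design; the head $L_{n_{k_j}}(\sum_{i<j} c_i y_i)$ vanishes thanks to the smallness arranged on $L_{n_{k'}}u_i$ for $k'\gg k_i$; and the tail $L_{n_{k_j}}(\sum_{i>j} c_i y_i)$ is controlled by the structural bound $q(L_{n_j}|_{M_j})\leq C p(\cdot)$ combined with the nestedness $M_i\subset M_j$ for $i\geq j$, the non-structural perturbations $u_i$ being absorbed by their smallness in $p_j$. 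The scaling factor $c_{j_0}$ is handled by including rational rescalings of the $z_m$ in the enumeration.

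The main obstacle is to satisfy simultaneously, at every inductive step, three competing requirements: that $(y_j)_j$ is basic (forcing the perturbations $u_j$ to fall below a Bessaga--Pelczynski threshold determined by the previously constructed $y_i$), that $L_{n_{k_j}}u_j$ nevertheless approximates the assigned target $z_{m_j}$ to within $1/l_j$ in $q_{l_j}$, and that the continuity bound on $L_{n_j}|_{M_j}$ can be invoked on series involving the perturbations $u_i\notin M_j$. This is resolved by the careful ordering of the smallness parameters, exactly as in the bookkeeping carried out in \cite{Men2}; the changes required to pass from $M=Z$ to a general closed subspace $M$ of $Z$ are only notational, namely replacing the topologically transitive / mixing hypotheses by their subspace versions throughout.
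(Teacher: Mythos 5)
Your proposal is correct and follows essentially the same route as the paper, which in fact omits the proof entirely, stating that it is ``up to very slight changes, quite identical'' to Menet's \cite[Theorem 1.11 and Remark 1.12]{Men2}; your sketch is precisely that adaptation (a basic sequence built from perturbed elements of the nested $M_j$, Condition (C) supplying the small perturbations realizing the approximation tasks, and the structural bound $q(L_{n_j}x)\leq Cp(x)$ controlling the tails). Nothing in your outline departs from the argument the authors defer to.
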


\subsection{Generalized universal series}\label{S2S2}
In this paper, we are interested in generalizing the classical notion of universal series studied in \cite{bgnp}. Let $E$, $A\subset \mathbb{K}^{\mathbb{N}}$ and $X$ be three Fr\'echet spaces whose topologies are defined by translation-invariant metrics $d_E$, $d_A$ and $\varrho$, respectively. Let $\left(e_k\right)_k$ denote the canonical sequence in $\mathbb{K}^{\mathbb{N}}$. For $p$ a polynomial in $\mathbb{K}^{\mathbb{N}}$, i.e. a finite linear combination of the $e_k$'s, we denote by $v(p)$ its valuation (i.e. $v(p)=\min(k; a_k\ne 0)$ where $p=\sum_{k\geq 0}a_ke_k$) and by $d(p)$ its degree (i.e. $d(p)=\max(k; a_k\ne 0)$ where $p=\sum_{k\geq 0}a_ke_k$). Let $\mathcal{F}=\left(f_k\right)_{k\geq 0}$ be sequence in $E$ and let $\mathcal{X}=\left(x_{n,k}\right)_{n\geq k\geq 0}$ be a sequence in $X$. We make the following assumptions.

\noindent{}(i) The set $G\subset \mathbb{K}^{\mathbb{N}}$ of all polynomials is included in $A$ and is dense in $A$;\\
(ii) The coordinate projections $A\rightarrow \mathbb{K}$, $a=\left(a_n\right)_n\longmapsto a_k$, $k\geq 0$, are continuous;\\
(iii) The set $G_E\subset E$ of all polynomials in $E$ (i.e. the finite linear combinations of the $f_k$'s) is dense in $E$;\\
(iv) There exists a linear continuous map $T_0:E\rightarrow A$ such that $T_0\left(f_k\right)=e_k$ for every $k\geq 0$;\\
(v) For every $k\geq 0$, the sequence $\left(x_{n,k}\right)_{n\geq k}$ is convergent in $X$.

\smallskip
We also denote by $S_n^{\mathcal{F}}:A\rightarrow E$ (resp. $S_n^{\mathcal{X}}:A\rightarrow X$) the map which takes $\left(a_n\right)_n$ to $\sum_{k=0}^na_kf_k$ (resp. $\sum_{k=0}^na_kx_{n,k}$).

\begin{remark} {\rm One can observe that (iv) implies that the family $\left(f_k\right)_k$ is linearly independent.}
\end{remark}

\begin{definition} {\rm We keep the above assumptions. Let $\mu=\left(\mu_n\right)_{n\geq 0}\subset \mathbb{N}$ be an increasing sequence. \begin{enumerate}[(1)]
\item An element $f\in E$ is a $\mu$-generalized universal series (with respect to $\left(x_{n,k}\right)$) if
\[
X=\overline{\left\{S_{\mu_n}^{\mathcal{X}}\circ T_0(f):\,n\geq 0\right\}}.
\]
We denote by $\mathcal{U}^{\mu}\left(\mathcal{X}\right)\cap E$ the set of such elements.\\
\item An element $f\in E$ is a $\mu$-restricted generalized universal series (with respect to $\left(f_k\right)$, $\left(x_{n,k}\right)$) if
\[
X\times\{0\}\subset \overline{ \left\{\left(S_{\mu_n}^{\mathcal{X}}\circ T_0(f),f-S_{\mu_n}^{\mathcal{F}}
\circ T_0(f)\right):\,n\geq 0\right\} }.
\]
We denote by $\mathcal{U}^{\mu}_E\left(\mathcal{X}\right)$ the set of such elements.\end{enumerate}}
\end{definition}

\begin{remark} {\rm (1) Observe that the indices $n$ and $k$ in $\left(x_{n,k}\right)_{n\geq k\geq 0}$ do not play the same role in the definition of generalized universal series.\\
(2) When $x_{n,k}=x_k$ for every $n\geq 0$ and every $0\leq k\leq n$, we recover the standard notion of universal (resp. restricted universal) series with respect to $\left(x_k\right)_k$.\\
(3) When $\left(f_k\right)_k$ is a Schauder basis of $E$, we automatically have $\mathcal{U}^{\mu}\cap E\left(\mathcal{X}\right)=\mathcal{U}^{\mu}_E\left(\mathcal{X}\right)$.}
\end{remark}

\begin{notation} {\rm When $\mu=\mathbb{N}$, we simply denote $\mathcal{U}^{\mu}\left(\mathcal{X}\right)\cap E$ (resp. $\mathcal{U}^{\mu}_E\left(\mathcal{X}\right)$) by $\mathcal{U}\left(\mathcal{X}\right)\cap E$ (resp. $\mathcal{U}_E\left(\mathcal{X}\right)$). When  $x_{n,k}=x_k$ for every $n\geq 0$ and every $0\leq k\leq n$, we use the standard notations $\mathcal{U}^{\mu}\cap E$ and $\mathcal{U}^{\mu}_E$ to denote the set of classical universal series and that of classical restricted universal series respectively \cite{bgnp}.}
\end{notation}

\noindent The next result characterizes the existence of formal generalized universal series (i.e. when $E=A=\mathbb{K}^{\mathbb{N}}$).

\begin{proposition}\label{prop1} Let $\mu =\left(\mu _n\right)_n\subset \mathbb{N}$ be an increasing sequence. Under the previous notations and assumptions, the following assertions are equivalent.
\begin{enumerate}[(1)]
\item $\mathcal{U}^{\mu}\left(\mathcal{X}\right)\cap \mathbb{K}^{\mathbb{N}} \neq \emptyset$;
\item For all $K\geq 0$, $\displaystyle{\bigcup_{n\in \mu}\emph{span}\left(x_{n,k},\,K\leq k\leq n\right)}$ is dense in $X$;
\item For all $N\geq K\geq 0$, $\displaystyle{\bigcup_{\substack{n\geq N \\ n\in \mu}}\emph{span}\left(x_{n,k},\,K\leq k\leq n\right)}$ is dense in $X$.
\end{enumerate}
\end{proposition}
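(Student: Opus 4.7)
I would organize the argument as the cycle (3) $\Rightarrow$ (2) $\Rightarrow$ (3) (a cheap loop), then (1) $\Rightarrow$ (2) (which relies on the stabilisation provided by assumption (v)), and finally the substantive direction (3) $\Rightarrow$ (1), which is a standard inductive construction. Note that (3) with $N := K$ immediately yields (2); conversely, applying (2) to the parameter $K' := N$ gives density of $\bigcup_{n \in \mu} \mathrm{span}(x_{n,k}, N \leq k \leq n)$, and since each summand is trivial for $n < N$ and is contained in $\mathrm{span}(x_{n,k}, K \leq k \leq n)$, this forces (3). Hence (2) and (3) are equivalent.

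For (1) $\Rightarrow$ (2), let $a = (a_k) \in \mathcal{U}^{\mu}(\mathcal{X}) \cap \mathbb{K}^{\mathbb{N}}$, fix $K \geq 0$, and write $\tilde{x}_k := \lim_n x_{n,k}$ (provided by (v)). Given $y \in X$ and $\varepsilon > 0$, first choose $N_0 \geq K$ large enough so that $\varrho(\sum_{k=0}^{K-1} a_k x_{n,k},\, \sum_{k=0}^{K-1} a_k \tilde{x}_k) < \varepsilon/2$ for all $n \geq N_0$. Since $\{S_{\mu_n}^{\mathcal{X}}(a) : n \geq 0\}$ is dense in $X$, and removing the finitely many terms with $\mu_n < N_0$ preserves density (a non-trivial Fréchet space has no isolated points), one finds $n_0$ with $\mu_{n_0} \geq N_0$ and $\varrho(S_{\mu_{n_0}}^{\mathcal{X}}(a),\, y + \sum_{k=0}^{K-1} a_k \tilde{x}_k) < \varepsilon/2$. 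Subtracting $\sum_{k=0}^{K-1} a_k x_{\mu_{n_0}, k}$ from $S_{\mu_{n_0}}^{\mathcal{X}}(a)$ and using translation invariance of $\varrho$ then gives $\varrho(\sum_{k=K}^{\mu_{n_0}} a_k x_{\mu_{n_0}, k},\, y) < \varepsilon$, which exhibits $y$ as a limit of elements of the union in (2).

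For (3) $\Rightarrow$ (1) I would construct $(a_k)_k$ block by block along a countable dense sequence $(y_m)_{m \geq 1}$ in $X$. Set $N_0 := 0$ and $a_0 := 0$. At stage $m$, having fixed $a_0, \ldots, a_{N_{m-1}}$, let $K := N_{m-1}+1$ and $z_m := y_m - \sum_{k=0}^{K-1} a_k \tilde{x}_k$; use (v) to pick $N \geq K$ such that $\varrho(\sum_{k=0}^{K-1} a_k x_{n,k},\, \sum_{k=0}^{K-1} a_k \tilde{x}_k) < 1/(2m)$ for every $n \geq N$. Apply (3) with this $K$ and $N$ to extract $n_m \in \mu$ with $n_m \geq N$ and scalars $a_K, \ldots, a_{n_m}$ satisfying $\varrho(\sum_{k=K}^{n_m} a_k x_{n_m, k},\, z_m) < 1/(2m)$. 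Setting $N_m := n_m$ and combining the two estimates via translation invariance yields $\varrho(S_{n_m}^{\mathcal{X}}(a),\, y_m) < 1/m$. Since $n_m \in \mu$ and $(y_m)_m$ is dense in $X$, the set $\{S_{\mu_n}^{\mathcal{X}}(a) : n \geq 0\}$ is dense, so $a \in \mathcal{U}^{\mu}(\mathcal{X}) \cap \mathbb{K}^{\mathbb{N}}$.

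The main technical point, common to both (1) $\Rightarrow$ (2) and (3) $\Rightarrow$ (1), is the mismatch between partial sums starting at $0$ (what $\mathcal{U}^{\mu}$ records) and partial sums starting at $K$ (what the density conditions (2), (3) naturally produce): the ``prefix'' $\sum_{k=0}^{K-1} a_k x_{n,k}$ depends on $n$. Assumption (v) is exactly the ingredient that neutralises this dependence, by forcing the prefix to stabilise as $n \to \infty$, so that the target can be shifted from $y$ to $y + \sum_{k=0}^{K-1} a_k \tilde{x}_k$ and the residual discrepancy absorbed into the error budget.
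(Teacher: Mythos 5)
Your proof is correct and follows essentially the same route as the paper's: the trivial inclusion argument for (2)$\Leftrightarrow$(3), subtraction of the convergent prefix (via assumption (v)) for (1)$\Rightarrow$(2), and a block-by-block construction along a countable dense sequence for the converse. Your version is in fact slightly more explicit than the paper's about using (v) to control the drift of the prefix $\sum_{k<K}a_k x_{n,k}$ inside the block construction, but the underlying idea is identical.
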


\begin{proof}{\it (2)$\Longleftrightarrow$(3):} The implication (3)$\Rightarrow$(2) is obvious. Then, we assume that (2) is satisfied. Let $\left(K,N\right)\in \mathbb{N}^2$, $K\leq N$, be fixed. By (2),
\begin{eqnarray*}X & = & \overline{\bigcup_{n\in \mu}\text{span}\left(x_{n,k},\,N\leq k\leq n\right)}\\
& = & \overline{\bigcup_{\substack{n\geq N \\ n\in \mu}}\text{span}\left(x_{n,k},\,N\leq k\leq n\right)}\\
& \subset & \overline{\bigcup_{\substack{n\geq N \\ n\in \mu}}\text{span}\left(x_{n,k},\,K\leq k\leq n\right)},
\end{eqnarray*}
which gives the conclusion.

{\it (1)$\Rightarrow$(2):} Let $\left(U_j\right)_{j\geq 0}$ be a denumerable basis of open sets in $X$ and let us fix $j_0$. Let also $K\geq 0$ be fixed. Let $a=\left(a_n\right)_n\in \mathcal{U}^{\mu}\left(\mathcal{X}\right)$. Let $p=\left(a_0,a_1,\ldots,a_K,0,\ldots\right)$ and $p_{n}=\sum_{k=0}^Ka_kx_{n,k}$. By assumption (v) above, $p_n$ has a limit in $X$ when $n$ tends to $+\infty$. Hence $a-p=\left(0,\ldots,0,a_K,\ldots,a_n,\ldots\right)\in \mathcal{U}^{\mu}\left(\mathcal{X}\right)$ also. Therefore, there exists $n\geq K$, $n\in \mu$, such that $\sum_{k=K}^na_kx_{n,k}\in U_{j_0}$, which gives (2).

{\it (2)$\Rightarrow$(1):} The assumption allows to build a sequence of polynomials $\left(p_j\right)_{j\geq 0}$ with $d\left(p_j\right)\in \mu$ for any $j\geq 0$, $v\left(p_j\right)>d\left(p_{j-1}\right)$, and such that $S_{d\left(p_{j}\right)}^{\mathcal{X}}\left(p_{j}\right)\in U_{j}-\sum_{i=0}^{j-1}S_{d\left(p_{i}\right)}^{\mathcal{X}}\left(p_i\right)$. Then the (block) sequence $\left(p_j\right)_j\in \mathbb{K}^{\mathbb{N}}$ belongs to $\mathcal{U}^{\mu}\left(\mathcal{X}\right)\cap \mathbb{K}^{\mathbb{N}}$.
\end{proof}

\begin{remark} {\rm When  $x_{n,k}=x_k$ for every $n\geq 0$ and every $0\leq k\leq n$, we recover that $\mathcal{U}\cap \mathbb{K}^{\mathbb{N}}$ is non-empty if and only if $\hbox{span}\left(x_k,\,k\geq n\right)$ is dense in $X$ for every $n\geq 0$.}
\end{remark}

We are interested in a description of the set of generalized universal series. To this purpose, the results of Subsection \ref{S2S1} are very useful. With the notations of this latter, we will consider the two following settings, depending on what we are considering: generalized universal series or restricted generalized universal series. We fix an increasing sequence $\mu=\left(\mu_n\right)_n\subset \mathbb{N}$.
\begin{enumerate}[(A)]\item For generalized universal series:
\begin{enumerate}[{A}-(a)]
\item $Y=E$, $Z=M=X$;
\item $L_n=\left(S_{\mu_n}^{\mathcal{X}}\circ T_0\right)$, $n\in\mathbb{N}$;
\item $Y_0=G_E$, where we recall that $G_E$ is the set of all finite combinations of the $f_k$'s.
\end{enumerate}
\item For restricted generalized universal series:
\begin{enumerate}[{B}-(a)]
\item $Y=E$, $Z=X\times E$ and $M=X\times \{0\}$
\item $L_n=\left(S_{\mu_n}^{\mathcal{X}}\circ T_0,\mbox{id}_E-S_{\mu_n}^{\mathcal{F}}\circ T_0\right)$, $n\in\mathbb{N}$;
\item $Y_0=G_E$.
\end{enumerate}
\end{enumerate}
It is easily checked that Assumption (I) (see above) is satisfied under (A) or (B). The main result of this section states as follows.

\begin{theorem}\label{thm-gen-approx-lem}Let $\mu=\left(\mu_n\right)_n\subset \mathbb{N}$ be an increasing sequence.
\begin{enumerate}[(A)]\item For generalized universal series, we have:
\begin{enumerate}[{(A)}-(1)]\item The following assertions are equivalent.\\
(i) $\mathcal{U}^{\mu}\left(\mathcal{X}\right)\cap E\neq \emptyset$;\\
(ii) For every $x\in X$ and every $\varepsilon>0$, there exist $n\geq 0$ and $f\in E$ such that
\[
\varrho\left(S_{\mu _n}^{\mathcal{X}}\circ T_0(f),x\right)<\varepsilon\mbox{ and }d_E\left(f,0\right)<\varepsilon;
\]\\
(iii) $\mathcal{U}^{\mu}\left(\mathcal{X}\right)\cap E$ is a dense $G_\delta$ subset of $E$.
\item If, for every increasing sequence $\nu =\left(\nu _n\right)_n\subset \mu$, $\mathcal{U}^{\nu}\left(\mathcal{X}\right)\cap E$ is non-empty, then $\mathcal{U}^{\mu}\left(\mathcal{X}\right)\cap E$ contains, apart from $0$, a dense subspace of $E$.
\item If $E$ admits a continuous norm and if, for every increasing sequence $\nu =\left(\nu _n\right)_n\subset \mu$, $\mathcal{U}^{\nu}\left(\mathcal{X}\right)\cap E$ is non-empty, then $\mathcal{U}^{\mu}\left(\mathcal{X}\right)\cap E$ is spaceable.
\end{enumerate}
\item For restricted generalized universal series, we have:
\begin{enumerate}[{(B)}-(1)]\item The following assertions are equivalent.\\
(i) $\mathcal{U}^{\mu}_E\left(\mathcal{X}\right)\neq \emptyset$;\\
(ii) For every $x\in X$ and every $\varepsilon>0$, there exist $n\geq 0$ and $f\in E$ such that
\[
\varrho\left(S_{\mu _n}^\mathcal{X}\circ T_0(f),x\right)<\varepsilon\mbox{ and }d_E\left(S_{\mu _n}^{\mathcal{F}}\circ T_0(f),0\right)<\varepsilon;
\]
(iii) $\mathcal{U}^{\mu}_E\left(\mathcal{X}\right)$ is a dense $G_\delta$ subset of $E$.
\item  If, for every increasing sequence $\nu =\left(\nu _n\right)_n\subset \mu$, $\mathcal{U}^{\nu}_E\left(\mathcal{X}\right)$ is non-empty, then $\mathcal{U}^{\mu}_E\left(\mathcal{X}\right)$ contains, apart from $0$, a dense subspace of $E$.
\item If $E$ admits a continuous norm and if, for every increasing sequence $\nu =\left(\nu _n\right)_n\subset \mu$, $\mathcal{U}^{\nu}_E\left(\mathcal{X}\right)$ is non-empty, then $\mathcal{U}^{\mu}_E\left(\mathcal{X}\right)$ is spaceable.
\end{enumerate}\end{enumerate}
\end{theorem}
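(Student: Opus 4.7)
The plan is to derive both parts (A) and (B) from the abstract tools of Subsection \ref{S2S1}, namely Theorem \ref{thms-27-and-28} and Proposition \ref{closedgene}, applied to the two functional setups described just before the theorem. I would first verify Assumption (I) in each setting with $Y_0 = G_E$: for a polynomial $y = \sum_{k \leq N} c_k f_k$ and $\mu_n \geq N$, one has $L_n y = \sum_{k \leq N} c_k x_{\mu_n, k}$, which converges in $X$ by assumption (v); in setting (B) the second coordinate $y - S_{\mu_n}^{\mathcal{F}} \circ T_0(y)$ vanishes as soon as $\mu_n \geq N$, so $L_n y$ converges in $M = X \times \{0\}$.

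With Assumption (I) in hand, the equivalences (A)-(1) and (B)-(1) would come from Theorem \ref{thms-27-and-28}, once the stated approximation conditions (ii) are recognized as criterion (iii) of that theorem. In (A)-(1) the correspondence is immediate. In (B)-(1) a short argument is needed: starting from $(n, f)$ satisfying (B)-(1)(ii), I would replace $f$ by the polynomial $f' := S_{\mu_n}^{\mathcal{F}} \circ T_0(f)$; a direct computation yields $S_{\mu_n}^{\mathcal{X}} \circ T_0(f') = S_{\mu_n}^{\mathcal{X}} \circ T_0(f)$ and $f' - S_{\mu_n}^{\mathcal{F}} \circ T_0(f') = 0$, so $(n, f')$ verifies criterion (iii) with $d_E(f', 0) = d_E(S_{\mu_n}^{\mathcal{F}} \circ T_0(f), 0) < \varepsilon$. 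The reverse implication is a routine triangle inequality for the translation-invariant metric $d_E$. Parts (A)-(2) and (B)-(2) then follow from Theorem \ref{thms-27-and-28}(2) applied directly to the hypothesis that $\mathcal{U}^{\nu}(\mathcal{X})\cap E$ (resp. $\mathcal{U}^{\nu}_E(\mathcal{X})$) is non-empty for every subsequence $\nu$ of $\mu$.

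For the spaceability statements (A)-(3) and (B)-(3), I would first use Proposition \ref{munmixing} to deduce that $(L_n)_n$ is mixing for $M$, and then invoke the displayed chain of implications to obtain Condition (C) for $M$ with some sequence $(n_k)_k$. The technical core is then to produce the non-increasing chain of infinite-dimensional closed subspaces $M_j \subset E$ required by Proposition \ref{closedgene}. My choice would be
\[
M_j := \overline{\text{span}}\bigl(f_k : k \geq \mu_{n_j} + 1\bigr).
\]
Each $M_j$ is closed by construction and infinite-dimensional because assumption (iv) makes $(f_k)_k$ linearly independent and $E$ carries a continuous norm, while the strict monotonicity of $\mu$ and $(n_k)_k$ yields $M_{j+1} \subset M_j$. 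Continuity of $T_0$ and of the coordinate projections on $A$, combined with density of polynomials, forces $(T_0 x)_k = 0$ for $k \leq \mu_{n_j}$ whenever $x \in M_j$; this gives $L_{n_j}(x) = 0$ in setting (A) and $L_{n_j}(x) = (0, x)$ in setting (B). In both cases the continuity bound in Proposition \ref{closedgene} is immediate (trivial, resp. with $p$ any continuous norm on $E$ dominating the $E$-component of $q$), and the proposition delivers the spaceability.

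The main obstacle in this scheme is the passage from the hypothesis to Condition (C), which forces us to traverse the chain Mixing $\Longrightarrow$ Condition (C) $\Longrightarrow$ topological transitivity and, implicitly, to rely on the Menet-type arguments transplanted from the case $M = Z$ to general $M$; this is already assumed from Proposition \ref{closedgene} and the preceding displayed implications, so no new work is needed here. The lesser but still delicate point is the massaging of the approximation condition in (B)-(1), where one must exploit that the truncations $S_{\mu_n}^{\mathcal{F}}$ and $S_{\mu_n}^{\mathcal{X}}$ interact consistently with $T_0$ on the $A$-side.
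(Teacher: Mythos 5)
Your proposal is correct and follows essentially the same route as the paper: Assumption (I) is verified on $G_E$, parts (1) and (2) are read off from Theorem \ref{thms-27-and-28} (with the same polynomial-truncation trick to reconcile (B)-(1)(ii) with criterion (iii) of that theorem), and the spaceability parts come from Proposition \ref{munmixing}, the mixing-to-Condition-(C) chain, and Proposition \ref{closedgene}. The only divergence is your choice $M_j=\overline{\mathrm{span}}\left(f_k:\,k\ge \mu_{n_j}+1\right)$ where the paper takes $M_j=\bigcap_{k=0}^{n_j}\ker S_k^{\mathcal{X}}\circ T_0$; both are valid witnesses for Proposition \ref{closedgene} (yours even annihilates $S_{\mu_{n_j}}^{\mathcal{F}}\circ T_0$, reducing the required bound to $q(0,x)\le Cp(x)$), so this is a cosmetic difference rather than a different method.
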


\begin{proof}Parts (A)-(1), (A)-(2), (B)-(1) and (B)-(2) are direct applications of Theorem \ref{thms-27-and-28}. We just mention that, to get (B)-(1)-(ii), we use the density of polynomials (with respect to the $f_k$'s) in $E$ and the continuity of the maps $S_{\mu_n}^{\mathcal{F}}\circ T_0$, $n\geq 0$.\\
It remains to prove (A)-(3) and (B)-(3). Both of these assertions will proceed from Proposition \ref{closedgene}. By Proposition \ref{munmixing}, $\left(L_n\right)_n$ is mixing for $M$ in both cases (in case (A), $M=X$, in case (B), $M=X\times\{0\}$). In particular, it satisfies Condition (C) for some increasing sequence $\left(n_k\right)_k\subset \mathbb{N}$. For $j\geq 0$, let $M_j=\cap_{k=0}^{n_j}\ker S_k^{\mathcal{X}}\circ T_0$. Every $M_j$ is an infinite dimensional closed subspace of $E$, and (A)-(3) immediately comes from an application of Proposition \ref{closedgene}. For (B)-(3), we just need to observe that for any seminorm $q$ of the Fr\'echet space $X\times E$, there actually exists a continuous norm $p$ of $E$ such that for any $f\in M_j$,
\[
q\left(L_{n_j}(f)\right)=q\left(0,f-S_{n_j}^{\mathcal{F}}(f)\right)\leq Cp(f),
\]
for any $j\geq 0$, by continuity of $S_{n_j}^{\mathcal{F}}$.
\end{proof}

\begin{remark}{\rm (1) When the sequence $\left(f_k\right)_k$ is a Schauder basis of $E$, we shall say that Part A and Part B of the previous theorem coincide.\\
(2) Using that the set of all finite combinations of the $f_k$'s is dense in $E$, it easily stems that Assertion (A)-(1) (ii) can be rephrased as follows, using the fact that the generalized universal property is preserved under translation by polynomials:

(A)-(1) (ii)' For every $x\in X$ and every $\varepsilon>0$, there exist $m\geq n\geq 0$ such that
\[
\varrho\left(S_{\mu _n}^{\mathcal{X}}\circ T_0(f),x\right)<\varepsilon\mbox{ and }d_E\left(S_{\mu _m}^{\mathcal{F}}\circ T_0(f),0\right)<\varepsilon.
\]}
\end{remark}

On the one hand, if  $x_{n,k}=x_k$ for every $n\geq 0$ and every $0\leq k\leq n$, then we observe that Assumption (II) in Subsection \ref{S2S1} is satisfied in the setting of restricted universal series, that is under (B). Therefore, Theorem \ref{GEthm} immediately yields \cite[Theorem 1]{bgnp} which asserts, in particular, that if $\mathcal{U}_E$ is non-empty, then it automatically contains, apart from $0$, a dense subspace. In this context, \cite{Men} also ensures that if $E$ admits a continuous norm and if $\mathcal{U}_E$ is non-empty, then $\mathcal{U}_E$ is spaceable. On the other hand, the formalism of generalized universal series is too much general to proceed from Theorem \ref{GEthm} and \cite{Men} (or \cite{charp}).

\subsection{Spaceability of generalized universal series in $\mathbb{K}^{\mathbb{N}}$} Let us return in this section to the space $\mathbb{K}^{\mathbb{N}}=\mathbb{R}^{\mathbb{N}}$ or $\mathbb{C}^{\mathbb{N}}$ endowed with the Cartesian topology. It is well known that 
$\mathbb{K}^{\mathbb{N}}$ does not admit a continuous norm, so the spaceability of the set of generalized universal series in $\mathbb{K}^{\mathbb{N}}$ cannot proceed from Theorem \ref{thm-gen-approx-lem}. We keep the previous notations: let $X$ be a metrizable vector space over the field 
$\mathbb{K}=\mathbb{R}$ or $\mathbb{C}.$ 
Let us denote $\mathcal{X}=\left(x_{n,k}\right)_{n\geq k\geq 0}$ a fixed sequence of elements in $X.$ In this context 
we always have $\mathcal{U}_{\mathbb{K}^{\mathbb{N}}}(\mathcal{X})=\mathcal{U}(\mathcal{X})\cap \mathbb{K}^{\mathbb{N}}.$ We are interested in the spaceability of the set $\mathcal{U}(\mathcal{X})\cap {\mathbb{K}^{\mathbb{N}}}$. In the case of classical universal series, a complete answer is given by Theorem 4.1 of \cite{CMM}. 
The general case seems much more complicated. In the next proposition, we give sufficient conditions for 
the spaceability (resp. the non spaceability) of $\mathcal{U}(\mathcal{X})\cap {\mathbb{K}^{\mathbb{N}}}$, 
which will cover the new examples of generalized universal sets. First, 
for every $l\geq 1,$ let us introduce the sets
\begin{multline*}E_l=\{b\in X;\,\exists (n,m)\in\mathbb{N}^2\hbox{ with }n>m\geq l\hbox{ and }
b_l,b_{l+1},\dots,b_m,\dots,b_n\in\mathbb{K}\hbox{ such that }\\
\displaystyle b=\sum_{j=l}^mb_jx_{m,j}\hbox{ and }
\sum_{j=l}^mb_jx_{n,j}=\sum_{j=m+1}^nb_jx_{n,j}\}
\end{multline*}
and
\begin{multline*}T_l=\{b\in X;\,\exists (n,m)\in\mathbb{N}^2\hbox{ with }n>m\geq l
\hbox{ and }
b_l,b_{l+1},\dots,b_m,\dots,b_n,b_{n+1},\dots\in\mathbb{K}\hbox{ such that }\\
\displaystyle b=\sum_{j=l}^mb_jx_{m,j}\hbox{ and for every } k\geq n,\ 
\sum_{j=l}^mb_jx_{k,j}=\sum_{j=m+1}^kb_jx_{k,j}\}\end{multline*}
We are ready to state the following propositions.

\begin{proposition}\label{propospacesequence1} If for every $l\geq 1$ the set $T_l$ is dense in $X,$ then 
$\mathcal{U}(\mathcal{X})\cap \mathbb{K}^{\mathbb{N}}$ is spaceable.

\end{proposition}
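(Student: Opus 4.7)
The strategy is to construct an infinite-dimensional closed subspace $F\subset\mathbb{K}^{\mathbb{N}}$ as the closed linear span of an interleaved sequence $(u_i)_{i\geq 1}$ designed so that, at certain ``hitting'' steps, only one $u_i$ contributes to the partial sum. The density of $T_l$ produces basic ``bursts'' whose partial sum at a prescribed step hits any prescribed target in $X$, and whose partial sums thereafter vanish identically (via the sign-flip $b_r\mapsto -b_r$ for $r>m$ built into the definition of $T_l$). The delicate point is that each such burst has \emph{infinite} support, so one cannot separate bursts by disjoint support; a diagonal interleaving has to be used instead.

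Fix a countable dense subset $\{d_j\}_{j\geq 1}\subset X$ with $d_1\neq 0$, and a sequence $\varepsilon_j\downarrow 0$ with $\varepsilon_1<\varrho(0,d_1)$. Enumerate the pairs $(i,j)\in\{1,2,\dots\}^2$ diagonally (first by $i+j$, then by $i$). Along this enumeration, at the step for $(i,j)$, choose $l_{i,j}$ strictly greater than every previously constructed $n_{i',j'}$, then use the density of $T_{l_{i,j}}$ to pick $b^{(i,j)}\in T_{l_{i,j}}$ with $\varrho(b^{(i,j)},d_j)<\varepsilon_j$. The definition of $T_{l_{i,j}}$ supplies integers $m_{i,j}<n_{i,j}$ and scalars $(b^{(i,j)}_r)_{r\geq l_{i,j}}$ such that
\[
b^{(i,j)}=\sum_{r=l_{i,j}}^{m_{i,j}} b^{(i,j)}_r\, x_{m_{i,j},r},\qquad
\sum_{r=l_{i,j}}^{m_{i,j}} b^{(i,j)}_r\, x_{k,r}=\sum_{r=m_{i,j}+1}^{k} b^{(i,j)}_r\, x_{k,r}\ \ \text{for all } k\geq n_{i,j}.
\]
Define $\tilde{w}^{(i,j)}\in\mathbb{K}^{\mathbb{N}}$ by $\tilde{w}^{(i,j)}_r=b^{(i,j)}_r$ on $[l_{i,j},m_{i,j}]$, $\tilde{w}^{(i,j)}_r=-b^{(i,j)}_r$ on $(m_{i,j},\infty)$ and $\tilde{w}^{(i,j)}_r=0$ for $r<l_{i,j}$. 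A direct computation using the two identities above gives $S_{m_{i,j}}^{\mathcal{X}}(\tilde{w}^{(i,j)})=b^{(i,j)}$ and $S_k^{\mathcal{X}}(\tilde{w}^{(i,j)})=0$ for every $k\geq n_{i,j}$. Finally, set $u_i=\sum_{j\geq 1}\tilde{w}^{(i,j)}$, which defines an element of $\mathbb{K}^{\mathbb{N}}$ coordinatewise since $l_{i,j}\to\infty$ as $j\to\infty$.

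The crucial \emph{separation property} is that, for any fixed pair $(i_0,j_0)$ and any other pair $(i,j)$, the rule $l_{i',j'}>\sup_{\text{earlier}} n_{\cdot,\cdot}$ forces either $n_{i,j}\leq m_{i_0,j_0}$ (if $(i,j)$ precedes $(i_0,j_0)$) or $l_{i,j}>m_{i_0,j_0}$ (if $(i,j)$ comes later), so that $S_{m_{i_0,j_0}}^{\mathcal{X}}(\tilde{w}^{(i,j)})=0$ in either case. Summing over $j$ yields $S_{m_{i_0,j_0}}^{\mathcal{X}}(u_{i_0})=b^{(i_0,j_0)}$ and $S_{m_{i_0,j_0}}^{\mathcal{X}}(u_i)=0$ for every $i\neq i_0$. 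Since $b^{(i,1)}\neq 0$ (as $\varepsilon_1<\varrho(0,d_1)$), the $(u_i)$ are linearly independent. A routine continuity argument, combined with the fact that at each coordinate $r$ only finitely many $u_i$'s contribute (because $l_{i,1}\to\infty$ in $i$), identifies $F$ as the set of all $c=\sum_i c_iu_i$ with $c_i\in\mathbb{K}$, where the coefficients $c_i$ are uniquely determined by $S_{m_{i,1}}^{\mathcal{X}}(c)=c_i\,b^{(i,1)}$; in particular $F$ is infinite-dimensional and closed.

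To conclude, let $c\in F\setminus\{0\}$ and $i_0=\min\{i:c_i\neq 0\}$. For every $j\geq 1$, the separation property gives
\[
S_{m_{i_0,j}}^{\mathcal{X}}(c)=\sum_i c_i\, S_{m_{i_0,j}}^{\mathcal{X}}(u_i)=c_{i_0}\, b^{(i_0,j)},
\]
and since $\{b^{(i_0,j)}\}_{j\geq 1}$ is dense in $X$ (it lies within $\varepsilon_j$ of $\{d_j\}$ with $\varepsilon_j\to 0$) and $c_{i_0}\neq 0$, the set $\{S_{m_{i_0,j}}^{\mathcal{X}}(c):j\geq 1\}$ is dense in $X$; hence $c\in\mathcal{U}(\mathcal{X})\cap\mathbb{K}^{\mathbb{N}}$. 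The principal obstacle is precisely the infinite support of the $T_l$-bursts: the cancellation $S_k^{\mathcal{X}}(\tilde{w}^{(i,j)})=0$ for $k\geq n_{i,j}$, not a disjoint-support argument, is what makes a single ``active'' burst at each hitting step possible, and the diagonal interleaving is designed precisely so that this cancellation covers the hitting step of every other burst.
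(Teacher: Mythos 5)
Your proposal is correct and follows essentially the same route as the paper: the paper extracts from each element of $T_l$ exactly the same sign-flipped burst $a=(0,\dots,0,b_l,\dots,b_m,-b_{m+1},\dots)$ with $S_m^{\mathcal{X}}(a)\in U$ and $S_k^{\mathcal{X}}(a)=0$ for $k\geq n$, and then invokes the block-interleaving assembly of \cite[Theorem 4.1, $(2)\Rightarrow(1)$]{CMM}, which is precisely the diagonal construction you spell out in detail.
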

\begin{proof} Assume that, for every $l\geq 1$,  
the set $T_l$ is dense in $X$. In particular, $X$ is separable and we can consider a countable basis of open sets 
$(U_p)$ in $X$. We want to construct a sequence $(u_p)_{p\geq 0}$ in $\mathbb{K}^{\mathbb{N}}$ 
such that the sequence of valuations $(v(u_p))_{p\geq 0}$ is strictly increasing and 
all the non-zero elements $\sum_{p\geq 0}\alpha_p u_p$ are universal. Using the hypothesis, 
for any $p\geq 0$, any $l\geq 0,$ there exist $b\in\mathbb{K}^{\mathbb{N}},$ $m\geq l,$ $n\geq m$ and 
$b_l,\dots,b_n,b_{n+1},\dots\in\mathbb{K}$ such that 
\[
l\leq v(b),\ S_m^{\mathcal{X}}(b)\in U_p \hbox{ and } \forall k\geq n\ 
\sum_{j=l}^mb_jx_{k,j}=\sum_{j=m+1}^kb_jx_{k,j}.
\]
Define the sequence $a=(0,\dots,0,b_l,\dots,b_{m},-b_{m+1},\dots,-b_{n},-b_{n+1},\dots).$ 
Therefore we have 
\[
l\leq v(a),\ S^{\mathcal{X}}_m(a)\in U_k \hbox{ and } \forall k\geq n\  S^{\mathcal{X}}_{k}(a)=0.
\] 
We finish the proof along the same lines as that of $(2)\Rightarrow (1)$ of \cite[Theorem 4.1]{CMM}.
\end{proof}

\begin{proposition}\label{propospacesequence2} Assume that $X$ admits a continuous norm $\left\Vert \cdot \right\Vert.$ 
Then, if there exists $l\geq 1$ such that $E_l$ is not dense in 
$X,$ then $\mathcal{U}(\mathcal{X})\cap \mathbb{K}^{\mathbb{N}}$ is not spaceable.
\end{proposition}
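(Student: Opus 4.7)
\emph{Plan.} The argument is by contradiction: suppose there exists a closed infinite-dimensional subspace $F\subset\mathbb{K}^{\mathbb{N}}$ with $F\setminus\{0\}\subset\mathcal{U}(\mathcal{X})$. Using the non-density of $E_l$ together with the continuous norm $\left\Vert\cdot\right\Vert$ on $X$, fix an open ball $B=\{x\in X:\left\Vert x-y_0\right\Vert<\eta\}$ with $B\cap\overline{E_l}=\emptyset$; note that $0\in E_l$ (take all $b_j=0$), so $0\notin B$.

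Since the projection $\mathbb{K}^{\mathbb{N}}\to\mathbb{K}^{l}$ sending $a$ to $(a_0,\dots,a_{l-1})$ is continuous with finite-dimensional image, the subspace $F_l:=F\cap\{a\in\mathbb{K}^{\mathbb{N}}:a_0=\dots=a_{l-1}=0\}$ has codimension at most $l$ in $F$, hence is still infinite-dimensional. Pick any nonzero $a\in F_l$; by universality, the orbit $\{S_n^{\mathcal{X}}(a):n\geq 0\}$ is dense in $X$, so there exists $m\geq l$ with $S_m^{\mathcal{X}}(a)\in B$ (observe that $m<l$ would give $S_m^{\mathcal{X}}(a)=0\notin B$). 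The strategy is to derive a contradiction by showing $S_m^{\mathcal{X}}(a)\in\overline{E_l}$.

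To this end, I would use universality once more to extract a strictly increasing sequence $(n_k)_k$ with $n_k>m$ and $\left\Vert S_{n_k}^{\mathcal{X}}(a)\right\Vert\to 0$, then construct, for each $k$, a finitely supported sequence $c^{(k)}\in\mathbb{K}^{\mathbb{N}}$ with $\mathrm{supp}(c^{(k)})\subset[l,n_k]$, $S_{n_k}^{\mathcal{X}}(c^{(k)})=0$ exactly, and $\left\Vert S_m^{\mathcal{X}}(c^{(k)})-S_m^{\mathcal{X}}(a)\right\Vert\to 0$. Each such $c^{(k)}$ is a direct witness that $S_m^{\mathcal{X}}(c^{(k)})\in E_l$ (with parameters $m$ and $n_k$), so passing to the limit yields $S_m^{\mathcal{X}}(a)\in\overline{E_l}$, contradicting $S_m^{\mathcal{X}}(a)\in B$. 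To produce $c^{(k)}$, I would start from the truncation $a^{[n_k]}$ of $a$ to the interval $[l,n_k]$ and add a corrector supported in $(m,n_k]$ so that the residual at index $n_k$ vanishes exactly; the room needed for this comes from the fact that $F\cap\{a:v(a)>m\}$ is itself a closed infinite-dimensional subspace, whose images under the $S_{n_k}^{\mathcal{X}}$'s in the finite-dimensional spans $\mathrm{span}(x_{n_k,j}:m<j\leq n_k)$ must be used to absorb the small residuals.

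The main obstacle is exactly this passage from approximate to exact cancellation: universality gives $S_{n_k}^{\mathcal{X}}(a)\to 0$ in norm, which is not by itself enough for membership in $E_l$, and one has to solve the linear equation $S_{n_k}^{\mathcal{X}}(\cdot)=-S_{n_k}^{\mathcal{X}}(a)$ over a finite-dimensional image of $F\cap\{v>m\}$ while keeping the induced modification of $S_m^{\mathcal{X}}(a)$ of vanishing norm. The continuous norm on $X$ is essential to quantify both the shrinking residuals and the disturbance of $S_m^{\mathcal{X}}$, and the infinite-dimensionality of $F$ is what ensures that, as $k\to\infty$, enough degrees of freedom are available to choose small correctors. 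Once this finite-dimensional linear-algebraic step is carried out, the remainder of the argument is a direct application of the definitions of $E_l$, $\overline{E_l}$, and of spaceability.
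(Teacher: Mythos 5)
Your strategy diverges from the paper's, and its central step does not go through. You reduce everything to the claim that, for a \emph{single} universal element $a$ with $v(a)\geq l$ and a suitable $m$, one has $S_m^{\mathcal{X}}(a)\in\overline{E_l}$, to be witnessed by finitely supported $c^{(k)}$ with $S_{n_k}^{\mathcal{X}}(c^{(k)})=0$ \emph{exactly} and $S_m^{\mathcal{X}}(c^{(k)})\to S_m^{\mathcal{X}}(a)$. This is precisely the step you flag as ``the main obstacle,'' and it cannot be filled as described: membership in $E_l$ requires the head $\sum_{j=l}^m b_jx_{n,j}$ to lie \emph{exactly} in $\mathrm{span}\left(x_{n,j}\colon m<j\leq n\right)$, whereas $\left\Vert S_{n_k}^{\mathcal{X}}(a)\right\Vert\to 0$ only says that this head is \emph{close} to that span; a small vector need not belong to a given finite-dimensional subspace. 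The linear system $S_{n_k}^{\mathcal{X}}(g)=-S_{n_k}^{\mathcal{X}}(a)$ you want to solve over $F\cap\{v>m\}$ has a right-hand side with no reason to lie in the image, and the infinite-dimensionality of $F$ does not help, since for each fixed $n_k$ that image is one fixed finite-dimensional subspace. Worse, if your argument worked it would use only the existence of one universal element of valuation $\geq l$, hence would prove that non-emptiness of $\mathcal{U}(\mathcal{X})\cap\mathbb{K}^{\mathbb{N}}$ forces every $E_l$ to be dense. Theorem \ref{B1} refutes this: there each family $\left(x^k(1-x)^{n-k}\right)_{0\leq k\leq n}$ is free, so $E_l=\{0\}$ is nowhere dense while $X$ carries the sup norm and universal elements of every valuation abound; your scheme would place some nonzero $S_m^{\mathcal{X}}(a)$ in $\overline{E_l}=\{0\}$, which is absurd. (A secondary, repairable point: the norm-ball $B=\{x\colon\left\Vert x-y_0\right\Vert<\eta\}$ need not fit inside the complement of $\overline{E_l}$, because the topology of $X$ is in general strictly finer than the norm topology; work with an arbitrary nonempty open set of $X$ instead.)

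The paper uses the non-density of $E_l$ in the opposite direction. Fixing an open $U$ with $U\cap E_l=\emptyset$, it takes a sequence $\left(u^{(n)}\right)_n\subset F\setminus\{0\}$ with strictly increasing valuations $\geq l$ (this is where the closed infinite-dimensional subspace enters, via \cite[Lemma 5.1]{charp}), finds $m_n$ with $S_{m_n}^{\mathcal{X}}(u^{(n)})\in U$, and observes that $S_k^{\mathcal{X}}(u^{(n)})\neq 0$ for \emph{every} $k>m_n$: a vanishing later partial sum would exhibit $S_{m_n}^{\mathcal{X}}(u^{(n)})$ as an element of $E_l$, contradicting $U\cap E_l=\emptyset$. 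These never-vanishing tails are then fed into the construction of $(1)\Rightarrow(2)$ of \cite[Theorem 4.1]{CMM}: one assembles a convergent series $\sum_n\alpha_nu^{(n)}\in F\setminus\{0\}$ whose partial sums stay bounded away from $0$ in the continuous norm, i.e.\ a non-universal element of $F$, which is the desired contradiction. You would need to redirect your argument along these lines; the approximate-to-exact cancellation you rely on is not available.
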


\begin{proof} First assume that there exists $l\geq 0$ such that 
the set $E_l$ 
is not dense in $X.$ Therefore there exists an open set $U\subset X$ such that $U\cap E_l=\emptyset.$ 
Suppose that there exists an infinite dimensional closed subspace $F$ in 
$\mathcal{U}(X)\cap \mathbb{K}^{\mathbb{N}}.$ One can find a sequence $(u^{(n)})_{n\geq 0}$ in $F\setminus\{0\}$ 
such that $(v(u^{(n)}))_{n\geq 0}$ is strictly increasing and $v(u^{(0)})\geq l$ \cite[Lemma 5.1]{charp}. Since for every 
$n\geq 0$, $u^{(n)}$ is a universal element, there exists $m_n\geq v\left(u^{(n)}\right)$ such that 
$S_{m_n}^{\mathcal{X}}(u^{(n)})=\sum_{j=l}^{m_n}u^{(n)}_jx_{m_n,j}\in U.$ If there exists 
$k>m_n$ such that $S_{k}(u^{(n)})\ne 0,$ then we have 
$\sum_{j=l}^{m_n}u^{(n)}_jx_{k,j}=-\sum_{j=m_n+1}^{k}u^{(n)}_jx_{m_k,j},$ i.e. $u^{(n)}\in E_l,$ 
which is impossible. 
Thus we have $S_k^{\mathcal{X}}(u^{(n)})\ne 0$ for all $k>m_n.$ 
We finish the proof along the same lines as that of $(1)\Rightarrow (2)$ of \cite[Theorem 4.1]{CMM}.
\end{proof}

\begin{example}\label{applicationspropospacesequence}{\rm (1) When $x_{n,k}=(1/\varphi(n))x_k$ for every $n\geq 0$, where $\varphi$ is an increasing function $\mathbb{N}\rightarrow\mathbb{K}\setminus \{0\}$ converging in $\mathbb{K}\setminus \{0\}\cup \{\infty\},$ i.e. in the case of extended universal series \cite{hadji} (see Example \ref{1st-exs-gen} (1)), observe that the set $T_l$ is dense in $X$ if and only of 
the set $E_l$ is dense in $X.$ In particular, we obtain a characterization of the spaceability of
sets of extended universal series in $\mathbb{K}^{\mathbb{N}}$. 
For example, Theorem \ref{thm-gen-approx-lem} ensures that there exists a universal sequence $a=(a_n)_{n\geq 0}$ 
of real numbers such that the set of all partial sums $\frac{1}{n}\sum_{j=0}^na_j$ is dense in $\mathbb{R}$ 
and that the set of such sequences is spaceable. Moreover it is easy to check that all the examples of sets of 
extended universal series given in \cite{hadji} are not spaceable.

(2) Further using the fact that there exists a universal sequence $a=(a_n)_{n\geq 0}$ 
of real numbers such that the set of all partial sums $\frac{1}{p}\sum_{j=0}^{p-1}a_j,$ when $p$ is a 
prime number, is dense in $\mathbb{R},$ observe that we can construct a Lebesgue-measurable function $f$ such that its 
Riemann sums $\left(\frac{1}{n}\sum_{j=0}^{n-1}f\left(\frac{j}{n}\right)\right)_n$ are dense in $\mathbb{R}.$ To do this, it suffices 
to set $f\left(\frac{j}{p}\right)=a_j,$ for $j=0,\dots, p-1,$ if $p$ is a prime number, and $f(t)=0$ otherwise. Observe that $f=0$ 
Lebesgue almost-everywhere. 

(3) Theorem \ref{thm-gen-approx-lem} ensures that there exists a universal sequence $a=(a_n)_{n\geq 0}$ 
of real numbers such that the set of all partial sums $\sum_{j=0}^n\frac{a_j}{n+j}$ is dense in $\mathbb{R}$ 
and Proposition \ref{propospacesequence1} ensures that the set of such sequences is spaceable. 
This example is not covered by the extended abstract theory. Further notice that  
it is possible to argue as in the above assertion (2) to find a Lebesgue-integrable function $t\mapsto \frac{f(t)}{1+t}$ such 
that the sequence of all its 
Riemann sums $\left(\sum_{j=0}^{n-1}\frac{f(\frac{j}{n})}{j+n}\right)_n$ is dense in $\mathbb{R}.$

(4) If for every integer $n$ the family $(x_{n,k})_{n\geq k\geq 0}$ is a free 
family, then Proposition 
\ref{propospacesequence2} ensures that the set 
$\mathcal{U}(\mathcal{X})\cap \mathbb{K}^{\mathbb{N}}$ is not spaceable. 

}
\end{example}

\subsection{A large class of generalized universal series} The easier way to produce 
generalized universal series is the following. Let $\mathcal{X}=\left(x_k\right)_{k\geq 0}$ be a sequence in $X$ and let $\alpha=\left(\alpha _{n,k}\right)_{n\geq k\geq 0}$ and $\beta=\left(\beta _{n,k}\right)_{n\geq k\geq 0}$ be two sequences of non-zero elements of $\mathbb{K}$, such that for any $k\geq 0$, the sequences $\left(\alpha _{n,k}\right)_{n}$ and $\left(\beta _{n,k}\right)_{n}$ are convergent in $\mathbb{K}$. Roughly speaking, the next proposition asserts that, whenever there exists a formal generalized universal series with respect to $\alpha\mathcal{X}=\left(x_{n,k}\right)_{n\geq k\geq 0}:=\left(\alpha_{n,k}x_k\right)_{n\geq k\geq 0}$, there also exists a formal generalized universal series with respect to $\beta\mathcal{X}=\left(x_{n,k}\right)_{n\geq k\geq 0}:=\left(\beta _{n,k}x_k\right)_{n\geq k\geq 0}$.

\begin{proposition}\label{prop2}Let $\mu \subset\mathbb{N}$ and $\nu \subset\mathbb{N}$ be two increasing sequences of natural numbers. The set $\mathcal{U}^{\mu}\left(\alpha\mathcal{X}\right)\cap \mathbb{K}^{\mathbb{N}}$ is non-empty if and only if the set $\mathcal{U}^{\nu}\left(\beta\mathcal{X}\right)\cap \mathbb{K}^{\mathbb{N}}$ is non-empty.
\end{proposition}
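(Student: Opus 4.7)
The plan is to reduce the equivalence to a density condition intrinsic to the sequence $(x_k)_k$ alone, which depends neither on the scalars $(\alpha_{n,k})$, $(\beta_{n,k})$ nor on the choice of increasing sequence $\mu$ or $\nu$, and then apply Proposition \ref{prop1}.

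First, I would verify that Proposition \ref{prop1} is applicable to both double-indexed sequences $\alpha \mathcal{X}$ and $\beta \mathcal{X}$. Only Assumption (v) requires checking: for each $k \geq 0$, the sequence $(\alpha_{n,k} x_k)_{n \geq k}$ converges in $X$ because $(\alpha_{n,k})_n$ converges in $\mathbb{K}$ and $x_k$ is fixed; the same argument works for $\beta$. All the other assumptions on $E$, $A$, $(f_k)$, $T_0$ either are trivial in the formal setting $E = A = \mathbb{K}^{\mathbb{N}}$ with $f_k = e_k$, or carry over directly from the hypothesis on $\mathcal{X}$.

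Second, the crucial observation is that, since every $\alpha_{n,k}$ and every $\beta_{n,k}$ is non-zero, for every fixed $n \geq K \geq 0$ we have
\[
\text{span}\left(\alpha_{n,k} x_k : K \leq k \leq n\right) = \text{span}\left(x_k : K \leq k \leq n\right) = \text{span}\left(\beta_{n,k} x_k : K \leq k \leq n\right),
\]
simply because multiplying generators by non-zero scalars does not change the span. Next, any strictly increasing sequence of non-negative integers is unbounded, so for every $K \geq 0$,
\[
\bigcup_{n \in \mu}\text{span}\left(x_k : K \leq k \leq n\right) = \text{span}\left(x_k : k \geq K\right) = \bigcup_{n \in \nu}\text{span}\left(x_k : K \leq k \leq n\right).
\]

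Combining these two observations, Proposition \ref{prop1} tells us that each of the two non-emptiness statements $\mathcal{U}^{\mu}(\alpha \mathcal{X}) \cap \mathbb{K}^{\mathbb{N}} \neq \emptyset$ and $\mathcal{U}^{\nu}(\beta \mathcal{X}) \cap \mathbb{K}^{\mathbb{N}} \neq \emptyset$ is equivalent to the single intrinsic condition: \emph{for every $K \geq 0$, $\text{span}(x_k : k \geq K)$ is dense in $X$}. Since this condition depends neither on the scalars nor on $\mu, \nu$, the two non-emptiness statements are equivalent to each other, which is the desired conclusion. There is no real obstacle here: the content of the proposition is precisely that non-zero diagonal perturbations and subsequence extraction are both irrelevant at the level of the span criterion given by Proposition \ref{prop1}.
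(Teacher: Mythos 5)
Your proof is correct and rests on the same ingredients as the paper's: both arguments reduce the statement to the span--density criterion of Proposition \ref{prop1} and exploit that multiplying the generators by non-zero scalars does not change spans, together with the unboundedness of $\mu$ and $\nu$. The paper phrases this by rewriting a specific approximating sum $\sum_{k=K}^n a_k\alpha_{n,k}x_k$ as $\sum_{k=K}^m a_k\frac{\alpha_{n,k}}{\beta_{m,k}}\beta_{m,k}x_k$ for some $m\in\nu$, $m\geq n$, whereas you package the same observation as a single intrinsic condition independent of the scalars and of the subsequence; the content is identical.
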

\begin{proof}Without loss of generality, it suffices to prove that $\mathcal{U}^{\mu}\left(\alpha\mathcal{X}\right)\cap \mathbb{K}^{\mathbb{N}}\neq \emptyset$ implies that $\mathcal{U}^{\nu}\left(\beta\mathcal{X}\right)\cap \mathbb{K}^{\mathbb{N}}\neq \emptyset$. Let $\left(U_j\right)_{j\geq 0}$ be a denumerable basis of open sets in $X$ and let us fix $j_0$. Let also $K\geq 0$. According to Proposition \ref{prop1}, there exists $n\geq K$, $n\in \mu$, such that $\sum_{k=K}^na_k\alpha_{n,k}x_{k}\in U_{j_0}$. Let $m$ be an integer in $\nu$ such that $m\geq n$. Then we have $\sum_{k=K}^ma_k\frac{\alpha_{n,k}}{\beta _{m,k}}\beta _{m,k}x_{k}\in U_{j_0}$, with $a_k=0$ for every $n<k\leq m$. By Proposition \ref{prop1}, we deduce that $\mathcal{U}^{\nu}\left(\beta\mathcal{X}\right)\cap \mathbb{K}^{\mathbb{N}}\neq \emptyset$.
\end{proof}

Denoting by $\mathcal{U}^{\mu}$ the set of formal classical universal series (i.e. $\mathcal{U}^{\mu}=\mathcal{U}^{\mu}\left(\beta\mathcal{X}\right)$ with $\beta _{n,k}=1$ for every $n\geq k\geq 0$), we immediately deduce the following corollary, using Theorem \ref{thm-gen-approx-lem}.
\begin{corollary}\label{dense-subspace-K-N}Let $\mu \subset \mathbb{N}$ be an increasing sequence. The following assertions are equivalent.
\begin{enumerate}\item $\mathcal{U}^{\mu}\cap \mathbb{K}^{\mathbb{N}}$ is non-empty;
\item For every increasing sequence $\nu \subset \mathbb{N}$, $\mathcal{U}^{\nu}\left(\alpha\mathcal{X}\right)\cap \mathbb{K}^{\mathbb{N}}$ is non-empty;
\item For every increasing sequence $\nu \subset \mathbb{N}$, $\mathcal{U}^{\nu}\left(\alpha\mathcal{X}\right)\cap \mathbb{K}^{\mathbb{N}}$ is a dense $G_{\delta}$-subset of $\mathbb{K}^{\mathbb{N}}$ and contains, apart from $0$, a dense subspace.
\end{enumerate}
\end{corollary}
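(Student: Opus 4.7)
The plan is to derive the three equivalences by combining Proposition \ref{prop2} (which decouples the emptiness question from both the multiplier sequence and the choice of $\mu$) with parts (A)-(1) and (A)-(2) of Theorem \ref{thm-gen-approx-lem}, applied in the base space $E=A=\mathbb{K}^{\mathbb{N}}$ with $f_k=e_k$ and $T_0=\mathrm{id}$.

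First I would observe that, by definition, the set $\mathcal{U}^\mu$ of classical formal universal series equals $\mathcal{U}^\mu(\beta\mathcal{X})$ for the constant choice $\beta_{n,k}=1$, which trivially fulfils the convergence hypothesis of Proposition \ref{prop2}. Applying that proposition to the pair of multiplier sequences $(\alpha,\beta)$ and to arbitrary pairs of increasing index sequences, I obtain, for every increasing $\nu\subset\mathbb{N}$,
\[
\mathcal{U}^\mu\cap \mathbb{K}^{\mathbb{N}}\neq\emptyset \;\Longleftrightarrow\; \mathcal{U}^\nu(\alpha\mathcal{X})\cap \mathbb{K}^{\mathbb{N}}\neq\emptyset,
\]
which is exactly the equivalence (1)$\Leftrightarrow$(2); note that the left-hand side does not depend on $\mu$, which is the (somewhat surprising) content of Proposition \ref{prop2}.

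Next, the implication (3)$\Rightarrow$(2) is trivial. For (2)$\Rightarrow$(3) I would fix an increasing $\nu\subset\mathbb{N}$ and apply (2) to every increasing subsequence $\nu'$ of $\nu$, which yields $\mathcal{U}^{\nu'}(\alpha\mathcal{X})\cap\mathbb{K}^{\mathbb{N}}\neq\emptyset$. Theorem \ref{thm-gen-approx-lem}(A)-(1), whose hypothesis is precisely this non-emptiness applied to $\nu$ itself, then gives that $\mathcal{U}^\nu(\alpha\mathcal{X})\cap\mathbb{K}^{\mathbb{N}}$ is a dense $G_\delta$ subset of $\mathbb{K}^{\mathbb{N}}$, while Theorem \ref{thm-gen-approx-lem}(A)-(2), whose hypothesis is the stronger non-emptiness statement for all subsequences $\nu'\subset\nu$, delivers the dense subspace contained in $\mathcal{U}^\nu(\alpha\mathcal{X})\cap\mathbb{K}^{\mathbb{N}}$ apart from $0$.

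There is no real obstacle here; the only point that asks for a moment's verification is that Theorem \ref{thm-gen-approx-lem} is indeed applicable to $\alpha\mathcal{X}=(\alpha_{n,k}x_k)_{n\geq k\geq 0}$ in the present setting. In $E=A=\mathbb{K}^{\mathbb{N}}$ with $f_k=e_k$ and $T_0=\mathrm{id}$, assumptions (i)--(iv) of Subsection \ref{S2S2} are immediate, while assumption (v), the convergence of $(\alpha_{n,k}x_k)_n$ in $X$ for each fixed $k$, is a direct consequence of the assumed convergence in $\mathbb{K}$ of each scalar sequence $(\alpha_{n,k})_n$.
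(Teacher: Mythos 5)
Your proof is correct and follows essentially the same route the paper intends: Proposition \ref{prop2} (with $\beta_{n,k}=1$) gives the $\mu$- and $\alpha$-independence of non-emptiness, and Theorem \ref{thm-gen-approx-lem} (A)-(1) and (A)-(2) upgrade non-emptiness for all subsequences to topological and algebraic genericity. Your closing verification that assumptions (i)--(v) hold for $E=A=\mathbb{K}^{\mathbb{N}}$ is the only detail the paper leaves unsaid, and you handle it correctly.
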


\medskip{}
\begin{remark}\rm{In view of Proposition \ref{prop2}, given two sequences $\alpha$ and $\beta$, we can wonder whether the following equality holds:
$$\mathcal{U}^{\mu}\left(\alpha\mathcal{X}\right)\cap \mathbb{K}^{\mathbb{N}}=\mathcal{U}^{\mu}\left(\beta\mathcal{X}\right)\cap \mathbb{K}^{\mathbb{N}}?$$
We show with an example that the answer to this question is negative. With the previous notations, we take $X=\mathbb{R}$ and we consider a sequence $\mathcal{X}=\left(x_{n,k}\right)_{n\geq k\geq 0}\subset \mathbb{R}\setminus \{0\}$. Let $q:\mathbb{N}\rightarrow \mathbb{Q}$ a one-to-one and onto sequence such that $q_0\neq 0$. We define a sequence $\left(a_k\right)_{k\geq 0}\subset \mathbb{R}$ as follows: $a_0=q_0$ and if we assume that $a_0,\ldots,a_n$ have been built, we define inductively $a_{n+1}$ so that
$$q_{n+1}=\sum _{k=0}^{n+1}a_k x_{n+1,k}.$$
Hence the numbers $a_k$, $k\geq 0$, are defined in order to have
$$q_n=\sum _{k=0}^na_kx_{n,k}\text{ for every }n=0,1,\ldots,$$
so that the sequence $\left(\sum _{k=0}^na_kx_{n,k}\right)_n$ is dense in $\mathbb{R}$, and then $\left(a_n\right)_n \in \mathcal{U}(\mathcal{X})\cap \mathbb{R}^{\mathbb{N}}$.

Notice that there are infinitely many non-zero $a_k$. We can define the sequence $\beta=\left(\beta_{n,k}\right)_{n\geq k\geq 0}$ in $\mathbb{R}$ as follows: For every $n\in \mathbb{N}$ and $0\leq k\leq n$, we set $\beta_{n,k}=0$ if $a_k=0$ and $\beta_{n,k}=1/a_k$ otherwise. Therefore $\sum _{k=0}^na_k\beta_{n,k}\in \mathbb{N}$ for every $n\in \mathbb{N}$ so, taking $x_{n,k}=1$ for every $0\leq k\leq n$, we get $\mathcal{U}(\mathcal{X})\cap \mathbb{R}^{\mathbb{N}}\neq \mathcal{U}(\beta\mathcal{X})\cap \mathbb{R}^{\mathbb{N}}$.}
\end{remark}

\medskip{}
In the case $E=A=\mathbb{K}^{\mathbb{N}}$ we also have the following. 

\begin{proposition} Let $\mu \subset\mathbb{N}$ be an increasing sequence and let $\mathcal{X}:=\left(x_{n,k}\right)_{n\geq k\geq 0}$ and $\mathcal{Y}:=\left(y_{k}\right)_{k\geq 0}$ be such that $x_{n,k}\rightarrow y_k$ as $n\rightarrow +\infty$ for every $k\geq 0$. If $\mathcal{U}^{\mu}(\mathcal{Y})\cap\mathbb{K}^{\mathbb{N}}\ne\emptyset$ then 
$\mathcal{U}^{\mu}(\mathcal{X})\cap\mathbb{K}^{\mathbb{N}}\ne\emptyset$ but the converse does not hold.
\end{proposition}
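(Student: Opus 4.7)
The plan is to use Proposition \ref{prop1} as the main tool: the assumption $\mathcal{U}^{\mu}(\mathcal{Y})\cap\mathbb{K}^{\mathbb{N}}\neq\emptyset$, on viewing $\mathcal{Y}$ as the constant double-indexed sequence $(y_k)_{n\geq k\geq 0}$, translates into density of $\bigcup_{n\in\mu}\text{span}(y_k,\,K\leq k\leq n)$ in $X$ for every $K\geq 0$, and I will deduce the analogous density for $\mathcal{X}$. I fix $z\in X$, $\varepsilon>0$ and $K\geq 0$, and choose via the assumption an $n\in\mu$ with $n\geq K$ and scalars $a_K,\ldots,a_n\in\mathbb{K}$ such that $\varrho\bigl(\sum_{k=K}^n a_k y_k,\,z\bigr)<\varepsilon/2$. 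Using that $X$ is a topological vector space and $x_{m,k}\to y_k$ for every fixed $k$, the finite linear combination $\sum_{k=K}^n a_k x_{m,k}$ tends to $\sum_{k=K}^n a_k y_k$ as $m\to\infty$, so I pick $m\in\mu$ with $m\geq n$ satisfying $\varrho\bigl(\sum_{k=K}^n a_k x_{m,k},\,\sum_{k=K}^n a_k y_k\bigr)<\varepsilon/2$. Padding with zeros, I set $b_k=a_k$ for $K\leq k\leq n$ and $b_k=0$ for $n<k\leq m$, so that $\sum_{k=K}^m b_k x_{m,k}=\sum_{k=K}^n a_k x_{m,k}$, and the triangle inequality gives $\varrho\bigl(\sum_{k=K}^m b_k x_{m,k},z\bigr)<\varepsilon$. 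This verifies condition (2) of Proposition \ref{prop1} for $\mathcal{X}$, hence $\mathcal{U}^{\mu}(\mathcal{X})\cap\mathbb{K}^{\mathbb{N}}\neq\emptyset$.

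To show that the converse fails, I will exhibit an explicit example in $X=\mathbb{R}$ with $\mu=\mathbb{N}$. Set $y_k=0$ for every $k\geq 0$ and define $x_{n,k}=1$ if $k=n$ and $x_{n,k}=0$ otherwise. For every fixed $k$, the sequence $(x_{n,k})_{n\geq k}$ equals $(1,0,0,\ldots)$, which converges to $0=y_k$, so the hypothesis $x_{n,k}\to y_k$ is satisfied. Since $\text{span}(y_k,\,k\geq 0)=\{0\}$ is not dense in $\mathbb{R}$, Proposition \ref{prop1} forces $\mathcal{U}^{\mu}(\mathcal{Y})\cap\mathbb{R}^{\mathbb{N}}=\emptyset$. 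On the other hand, for any $a=(a_n)\in\mathbb{R}^{\mathbb{N}}$ one has $\sum_{k=0}^n a_k x_{n,k}=a_n$; choosing $(a_n)$ to be an enumeration of a countable dense subset of $\mathbb{R}$ (for instance $\mathbb{Q}$) yields an element of $\mathcal{U}^{\mu}(\mathcal{X})\cap\mathbb{R}^{\mathbb{N}}$, which is therefore non-empty.

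I do not anticipate a serious technical obstacle. The forward implication is essentially a routine ``shift-and-pad'' argument built on top of Proposition \ref{prop1} and the joint continuity of finite linear combinations in a topological vector space. The only mildly creative step is the counterexample, where the Kronecker-delta choice $x_{n,k}=\delta_{n,k}$ decouples the partial sum $\sum_{k=0}^n a_k x_{n,k}$, which equals $a_n$ and can therefore be prescribed arbitrarily, from the trivial pointwise limit $y_k=0$.
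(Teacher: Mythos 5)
Your proof is correct. The forward implication is exactly the argument the paper has in mind (it merely asserts it is ``obvious by property (2) of Proposition \ref{prop1}, the convergence $x_{n,k}\to y_k$ and the triangle inequality''); your shift-and-pad write-up supplies precisely the missing details, including the harmless point that one must re-choose the outer index $m\in\mu$, $m\geq n$, and pad the coefficients with zeros so that the approximant lies in $\mathrm{span}(x_{m,k},\,K\leq k\leq m)$. Where you genuinely diverge is the counterexample: the paper takes the Ces\`aro-type weights $x_{n,k}=1/n$ and defines the coefficients $\lambda_n$ recursively so that $\sum_{j=0}^n\lambda_j x_{n,j}=q_n$ enumerates $\mathbb{Q}$, whereas you take $x_{n,k}=\delta_{n,k}$, which collapses $S_n^{\mathcal{X}}(a)$ to $a_n$ and lets you prescribe the partial sums directly. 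Both satisfy assumption (v) with limit $y_k=0$, so $\mathcal{U}^{\mu}(\mathcal{Y})\cap\mathbb{R}^{\mathbb{N}}=\emptyset$ in either case. Your construction is the more economical of the two, and it has the small additional virtue of working verbatim for an arbitrary increasing $\mu$ (just enumerate $\mathbb{Q}$ along the indices $n\in\mu$), whereas the paper's recursion is written for $\mu=\mathbb{N}$ and would need a minor adjustment for a general subsequence; the paper's example, on the other hand, is chosen to echo the Riemann-sum phenomena of Example \ref{applicationspropospacesequence}.
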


\begin{proof}The first assertion is obvious by property $(2)$ of Proposition \ref{prop1}, the fact that $x_{n,k}\rightarrow y_k$ as $n\rightarrow +\infty$ 
for every $k\geq 0$ and the triangle inequality. 

We now prove that $\mathcal{U}^{\mu}(\mathcal{X})\cap\mathbb{K}^{\mathbb{N}}\ne\emptyset$ does not necessarily imply $\mathcal{U}^{\mu}(\mathcal{Y})\cap\mathbb{K}^{\mathbb{N}}\ne\emptyset$. Observe that if $y_k=0$ for every $k\geq 0$ except a finite set of natural numbers then 
by property $(2)$ of Proposition \ref{prop1} we have $\mathcal{U}^{\mu}(\mathcal{Y})\cap\mathbb{K}^{\mathbb{N}}=\emptyset.$ To obtain the desired conclusion, it 
suffices to construct 
a sequence $\mathcal{X}=\left(x_{n,k}\right)_{n\geq k\geq 0}$ where we have $\mathcal{U}^{\mu}(\mathcal{X})\cap\mathbb{K}^{\mathbb{N}}\ne\emptyset.$ So, using the previous notations, we take $X=\mathbb{R},$ 
endowed with the usual topology, and $\mathbb{K}=\mathbb{R}.$ Let $q:\mathbb{N}\rightarrow \mathbb{Q}$ be 
a sequence that is one-t-one and onto. We inductively define the sequence $\lambda:=(\lambda_n)_n \subset \mathbb{R}$ as follows: 
$$\lambda_0=q_0\hbox{ and, for }n=1,2,\dots,\, q_n=\frac{\lambda_0 +\lambda_1+\dots +\lambda_n}{n}.$$
For every $n=1,2,\dots,$ we set 
$$x_{n,k}=\frac{1}{n},\ k=0,1,2,\dots,n.$$
Observe that we have 
$$\sum_{j=0}^n\lambda_jx_{n,j}=q_n,$$
which implies that the sequence $(\sum_{j=0}^n\lambda_jx_{n,j})_n$ is dense in $\mathbb{R}.$ 
It follows that $\mathcal{U}(\mathcal{X})\cap \mathbb{R}^{\mathbb{N}}\ne\emptyset$ and since $x_{n,k}\rightarrow 0,$ 
as $n\rightarrow +\infty,$ for every $k\in \mathbb{N}$, the proof is complete.

\end{proof}

\begin{example}\label{1st-exs-gen}{\rm (1) $\alpha_{n,k}=1/\varphi(n)$ for every $n\geq 0$, where $\varphi$ is an increasing function $\mathbb{N}\rightarrow\mathbb{K}\setminus \{0\}$ converging in $\mathbb{K}\setminus \{0\}\cup \{\infty\}$. As we already said, this class of examples has been studied in \cite{hadji}. His main result \cite[Theorem 3.1]{hadji} is a consequence of the more general Theorem \ref{thm-gen-approx-lem} and Corollary \ref{dense-subspace-K-N}. Also observe that an application of Proposition \ref{prop2} to this case yields \cite[Theorem 5.1]{hadji}.

(2) $\displaystyle{\alpha_{n,k}=\frac{n-k+1}{n}}$: it corresponds to consider the sequence of Ces\`aro means $\left(M_n\right)_{n\geq 0}$ of the partial sum operators $S_n$:
\[
M_n=\frac{S_1+\ldots+S_n}{n}.
\]
In particular, if the sequence of operators $\left(S_n\right)_n$ is universal, then the sequence $\left(M_n\right)_n$ is also universal. This result is a version for generalized universal series of 
Ces\`aro hypercyclicity  (see \cite{Cost}).}
\end{example}

The examples given in this subsection are quite natural, but they live in $\mathbb{K}^{\mathbb{N}}$ 
(except Example \ref{applicationspropospacesequence} (2)), which is a strong limitation to produce interesting examples. In the next sections, we will exhibit examples of more sophisticated (restricted) generalized universal series, some living in spaces different from $\mathbb{K}^{\mathbb{N}}$, and inducing surprising properties (see Sections 4 and 5 especially).

\section{Examples of generalized universal series}\label{ExGe}
\subsection{Generalized Fekete universal series}
With the notations of the first section, the setting is the following. The space $E$ is the Fr\'echet space $\mathcal{C}^\infty(\mathbb{R})$ of $\mathcal{C}^\infty$ functions on $\mathbb{R}$, endowed with the topology defined by the countable family of semi-norms $\left(p_n\right)_n$, with
\[
p_n=\sum_{k=0}^n\sup_{\left[-n,n\right]}\left\vert f^{(k)}\right\vert.
\]
We take $A=\mathbb{R}^{\mathbb{N}}$ equipped with the Cartesian topology, and $X=\mathcal{C}_0\left(\mathbb{R}\right)$ the space of continuous functions on $\mathbb{R}$ which vanishes at $0$, endowed with the topology of uniform convergence on compact sets. We consider the sequences $\alpha\mathcal{X}:=\left(x_{n,k}\right)_{n\geq k\geq 0}=\left(\alpha_{n,k}x_k\right)_{n\geq k\geq 0}$, where $\alpha=\left(\alpha_{n,k}\right)_{n\geq k\geq 0}\subset \mathbb{K}$ is as in the previous section and $\mathcal{X}=\mathcal{F}=\left(x_k\right)_{k\geq 0}=\left(f_k\right)_{k\geq 0}:=\left(x^{k+1}\right)_{k\geq 0}$. Finally, the map $T_0$ is the Borel map which takes $f\in \mathcal{C}^\infty(\mathbb{R})$ to the sequences of its Taylor coefficients $\left(f^{(k)}(0)/k!\right)_{k\geq 0}$.

The following theorem is a generalization of Fekete's Theorem, from the point of view of $\mathcal{C}^{\infty}$ functions.
\begin{theorem}\label{fex}Let $\mu \subset\mathbb{N}$ be an increasing sequence. With the above notations, there exists a $\mathcal{C}^{\infty}$ function 
$f$ such that, 
for any continuous function $h:\mathbb{R}\rightarrow \mathbb{R},$ with $h(0)=0,$ 
any compact set $K\subset \mathbb{R},$ there exists an increasing sequence $\left(\lambda_n\right)_n \subset \mu$ such that 
\[
\sup_{x\in K}\left\vert \sum_{k=0}^{\lambda_n}\frac{f^{(k)}(0)}{k!}\alpha_{n,k}x^{k+1}-h(x)\right\vert\rightarrow 
0,\hbox{ as }n\rightarrow +\infty.
\]
In addition, if we denote by $\mathcal{U}^{\mu}\left(\alpha\mathcal{X}\right)\cap \mathcal{C}^{\infty}$ the set of such $\mu$-generalized Fekete universal functions, then this latter is a dense $G_\delta$ subset and contains, apart from $0$, a dense subspace and an infinite dimensional closed subspace of $\mathcal{C}^\infty(\mathbb{R})$.
\end{theorem}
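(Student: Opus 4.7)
The plan is to apply Theorem \ref{thm-gen-approx-lem}(A) in the framework of Section \ref{S2S2}, with $E = \mathcal{C}^\infty(\mathbb{R})$, $A = \mathbb{R}^{\mathbb{N}}$, $X = \mathcal{C}_0(\mathbb{R})$, and $T_0$ the Borel map assigning Taylor coefficients at $0$. Assumptions (i)--(v) of Section \ref{S2S2} are routine; in particular, the convergence of $(\alpha_{n,k})_n$ for each fixed $k$ gives (v). Parts (A)-(1) and (A)-(2) will then deliver the dense $G_\delta$ property and the dense vector subspace of universal elements; the spaceability claim is the delicate part because $\mathcal{C}^\infty(\mathbb{R})$ does not admit a continuous norm, so Theorem \ref{thm-gen-approx-lem}(A)-(3) cannot be invoked directly and an ad hoc argument will be needed.

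The heart of the proof is verifying the approximation condition (A)-(1)(ii): given $h \in \mathcal{C}_0(\mathbb{R})$, a compact $K \subset \mathbb{R}$ and $\varepsilon > 0$, find $n \geq 0$ and $f \in \mathcal{C}^\infty(\mathbb{R})$ such that
\[
\sup_{x \in K}\left|\sum_{k=0}^{\mu_n}\frac{f^{(k)}(0)}{k!}\alpha_{\mu_n,k} x^{k+1} - h(x)\right| < \varepsilon
\quad\text{and}\quad d_E(f,0) < \varepsilon.
\]
I would first establish a Fekete-type approximation lemma: for every prescribed integer $K_0 \geq 0$, there exists a polynomial $p(x) = \sum_{k=K_0}^{N} c_k x^{k+1}$ of valuation at least $K_0+1$ with $\sup_K|p - h| < \varepsilon/2$. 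This is obtained by extending $h(x)/x^{K_0+1}$ continuously across a small neighborhood $[-\delta,\delta]$ of $0$, approximating the extension uniformly on $K$ by a polynomial $Q$ via the Weierstrass theorem, and setting $p(x) = x^{K_0+1} Q(x)$; the error on $K\cap[-\delta,\delta]$ is controlled since both $|h|$ (by continuity of $h$ at $0$) and $|p(x)| \leq |x|^{K_0+1}\|Q\|_\infty$ are small there. Next, I would pick $n$ with $\mu_n \geq N$, set $a_k = c_k/\alpha_{\mu_n, k}$ for $K_0 \leq k \leq N$ and $a_k = 0$ otherwise (legitimate since each $\alpha_{\mu_n, k} \neq 0$), and take $f(x) = \phi(x)\sum_{k=K_0}^{N} a_k x^k$ where $\phi$ is a smooth cut-off equal to $1$ on a neighborhood of $0$ and compactly supported in a small interval. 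Since $\phi\equiv 1$ near $0$, the Taylor coefficients of $f$ at $0$ match the $a_k$'s, so the partial sum equals $p$ and approximates $h$ on $K$. The fact that $f$ has vanishing Taylor coefficients up to order $K_0-1$ and compact support lets us make $d_E(f,0) < \varepsilon$, provided $K_0$ is taken sufficiently large relative to the truncation index governing $\varepsilon$ in the Fréchet metric.

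Having verified (A)-(1)(ii) for $\mu$ and for every subsequence $\nu \subset \mu$, parts (A)-(1) and (A)-(2) of Theorem \ref{thm-gen-approx-lem} immediately yield the dense $G_\delta$ property of $\mathcal{U}^\mu(\alpha\mathcal{X})\cap\mathcal{C}^\infty(\mathbb{R})$ and the existence of a dense vector subspace of universal functions. For the infinite-dimensional closed subspace, the main obstacle is the absence of a continuous norm on $\mathcal{C}^\infty(\mathbb{R})$. I would overcome this by adapting the block construction of Proposition \ref{propospacesequence1}: build a sequence $(\tilde u_p)_p \subset \mathcal{C}^\infty(\mathbb{R})$ whose Taylor expansions at $0$ have pairwise disjoint finite index blocks, the $p$-th block being chosen by the approximation lemma above so as to exhaust a dense family of open sets in $X$, in such a way that every nonzero linear combination $\sum_p \alpha_p \tilde u_p$ has Taylor coefficients forming a universal formal series in $\mathbb{R}^{\mathbb{N}}$. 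The closed linear span of $(\tilde u_p)_p$ in $\mathcal{C}^\infty(\mathbb{R})$ then provides the desired infinite-dimensional closed subspace; the disjointness of the blocks and continuity of $T_0$ guarantee that the block structure (and hence universality of Taylor coefficients) is preserved under limits in the Cartesian topology of $\mathbb{R}^{\mathbb{N}}$.
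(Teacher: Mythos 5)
Your treatment of the existence and genericity statements is correct but follows a different route from the paper. The paper gets non-emptiness of $\mathcal{U}^{\mu}\left(\alpha\mathcal{X}\right)\cap\mathbb{R}^{\mathbb{N}}$ from classical Fekete's theorem combined with Proposition \ref{prop2} (the ``perturbation'' proposition, which is exactly designed to avoid redoing the approximation lemma), then lifts a formal universal series to a $\mathcal{C}^{\infty}$ function by Borel's theorem, and only then invokes Theorem \ref{thm-gen-approx-lem} Part (A). You instead verify condition (A)-(1)(ii) directly with a Fekete-type approximation lemma plus a cut-off; this is more self-contained and perfectly viable (your observation that the valuation $K_0$ must be taken large relative to the truncation index of the Fr\'echet metric, so that the derivatives of $\phi(\cdot/\rho)q$ of order $\leq K_0$ tend to $0$ with $\rho$, is exactly the right point). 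One small imprecision: on $K\cap[-\delta,\delta]$ the bound $\left\vert p(x)\right\vert\leq\left\vert x\right\vert^{K_0+1}\left\Vert Q\right\Vert_{\infty}$ is not small as stated, since $\left\Vert Q\right\Vert_{\infty}$ is of order $\left\Vert h\right\Vert_{\infty}/\delta^{K_0+1}$ near $x=\pm\delta$; you need the specific interpolating definition of the extension $g$ near $0$ (as in the proof of Theorem \ref{B1}, where $\left\vert x\right\vert^{p}\left\vert g(x)\right\vert\leq\left\vert h(x)\right\vert$ on $[0,\eta]$) to close that estimate.

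The genuine gap is in the spaceability argument. You correctly identify that $\mathcal{C}^{\infty}(\mathbb{R})$ has no continuous norm so (A)-(3) does not apply directly, but the proposed fix --- a block construction \`a la Proposition \ref{propospacesequence1} followed by taking the closed linear span in $\mathcal{C}^{\infty}(\mathbb{R})$ --- does not work as described. Proposition \ref{propospacesequence1} lives in $\mathbb{K}^{\mathbb{N}}$, where the closed span of a block sequence consists exactly of the coordinatewise-convergent sums $\sum_p\beta_p u_p$ and every nonzero element has a nonzero block. In $\mathcal{C}^{\infty}(\mathbb{R})$ the closed span of $(\tilde u_p)_p$ may contain a nonzero limit $g=\lim_m v_m$ of finite combinations for which all the ``coefficients'' $\beta_p$ tend to $0$ (e.g.\ $v_m=\beta^{(m)}\tilde u_{p_m}$ with $p_m\to\infty$); continuity of $T_0$ then only gives $T_0(g)=0$, i.e.\ $g$ is flat at $0$, and such a $g$ need not vanish. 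A nonzero flat function in the closed span is not universal (all its partial sums are $0$), so the subspace would fail. ``Disjointness of the blocks and continuity of $T_0$'' cannot rule this out; one needs a continuous norm to keep the blocks uniformly separated, and its absence is precisely the difficulty that \cite{CMM} addresses. The paper's proof circumvents this by applying Theorem \ref{thm-gen-approx-lem} Part (A)-(3) in $\mathcal{C}^{\infty}([-1,1])$ --- which does admit the continuous norm $\sup_{[-1,1]}\vert\cdot\vert$, while the universality of a function is a property of its Taylor coefficients at $0$ alone and hence makes sense there --- and then transporting the resulting closed subspace to $\mathcal{C}^{\infty}(\mathbb{R})$ by the continuous linear extension argument of the proof of \cite[Theorem 3.14]{CMM}. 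Your argument needs to be replaced by something of this kind, or at least supplemented by a proof that $T_0$ is injective (indeed, a topological isomorphism onto its image) on the closed span you construct.
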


\begin{proof}A combination of classical Fekete's result together with Proposition \ref{prop2} shows that $\mathcal{U}^{\mu}\left(\alpha\mathcal{X}\right)\cap {\mathbb{R}^{\mathbb{N}}}$ is non-empty, for every increasing sequence $\mu \subset\mathbb{N}$. Borel's Theorem ensures that $\mathcal{U}^{\mu}\left(\alpha\mathcal{X}\right)\cap \mathcal{C}^{\infty}$ is non-empty and the result follows from Theorem \ref{thm-gen-approx-lem} Part (A), except the very last assertion. Now an application of Theorem \ref{thm-gen-approx-lem} Part (A)-(3) ensures that $\mathcal{U}^{\mu}\left(\alpha\mathcal{X}\right)\cap \mathcal{C}^{\infty}\left([-1,1]\right)$ is spaceable. Finally, the conclusion follows from the proof of \cite[Theorem 3.14]{CMM}.
\end{proof}

\subsection{Bernstein generalized universal series} In this subsection, we investigate examples where the fixed 
sequence $(x_{n,k})_{n\geq k\geq 0}$ does not take the form $(\alpha_{n,k}x_k)_{n\geq k\geq n}$ with 
$\alpha_{n,k}\in\mathbb{K}.$ First we obtain a Fekete type result when we consider the sequence 
$(x_{n,k})_{n\geq k\geq 0}=(x^{k}(1-x)^{n-k})_{n\geq k\geq 0}.$ 

\begin{theorem}\label{B1} Let $\mu \in \mathbb{N}$ be an increasing sequence. There exists a sequence $a=(a_1,a_2,\dots)$ of real numbers such that, for every 
continuous real function $f$ on $[0,1]$ vanishing at $0,$ there exists a sequence 
$(\lambda_n) \subset\mathbb{N}$ of integers such that 
\[
\sum_{k=1}^{\lambda_n}a_k x^k(1-x)^{\lambda_n-k}\rightarrow f(x),\hbox{ as }n\rightarrow +\infty,
\hbox{ uniformly on }[0,1].
\]
The set of such $\mu$-generalized universal series is a dense $G_\delta$-subset of $\mathbb{R}^{\mathbb{N}}$ 
and contains, apart from $0$, a dense subspace of $\mathbb{R}^{\mathbb{N}},$ but it is not spaceable.
\end{theorem}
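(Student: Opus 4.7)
The plan is to apply the abstract Theorem \ref{thm-gen-approx-lem} (Part A) for the non-emptiness, the $G_\delta$-density and the dense subspace, and then to rule out spaceability via Proposition \ref{propospacesequence2}. We work in the setting $E=A=\mathbb{R}^{\mathbb{N}}$, $X=\mathcal{C}_0([0,1])=\{g\in\mathcal{C}([0,1]):g(0)=0\}$ endowed with the uniform norm, with $(f_k)_k=(e_k)_k$ the canonical sequence in $\mathbb{R}^{\mathbb{N}}$ and $T_0$ the identity. The double-indexed sequence $\mathcal{X}=(x_{n,k})_{n\geq k\geq 0}$ is defined by $x_{n,k}=x^k(1-x)^{n-k}$ for $k\geq 1$ and $x_{n,0}=0$ (which fits the formal framework without altering the partial sums appearing in the statement). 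Assumption $(v)$ is immediate since $x_{n,k}\to x^k$ uniformly on $[0,1]$ as $n\to +\infty$ for each fixed $k$.

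The first step is to verify condition $(2)$ of Proposition \ref{prop1}: for every $K\geq 1$ (the case $K=0$ reducing to $K=1$ thanks to $x_{n,0}=0$), $\bigcup_{n\in\mu}\mathrm{span}(x_{n,k}:K\leq k\leq n)$ is dense in $\mathcal{C}_0([0,1])$. The substitution $j=k-K$ gives
\[
\mathrm{span}\bigl(x^k(1-x)^{n-k}:K\leq k\leq n\bigr) = x^K\cdot\mathrm{span}\bigl(x^j(1-x)^{(n-K)-j}:0\leq j\leq n-K\bigr),
\]
and the latter family is the Bernstein basis of polynomials of degree at most $n-K$, so its span equals $x^K\cdot\mathbb{R}_{n-K}[x]$. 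Taking the union over $n\in\mu$ produces $x^K\cdot\mathbb{R}[x]=\mathrm{span}\{x^K,x^{K+1},\ldots\}$, which is dense in $\mathcal{C}_0([0,1])$ by the classical M\"untz--Sz\'asz theorem since $\sum_{k\geq K}1/k=+\infty$. The argument is insensitive to the choice of infinite subsequence of $\mu$, so Proposition \ref{prop1} yields the non-emptiness of $\mathcal{U}^{\nu}(\mathcal{X})\cap\mathbb{R}^{\mathbb{N}}$ for every increasing $\nu\subset\mu$, and Theorem \ref{thm-gen-approx-lem}(A)-(1)-(2) then delivers the $G_\delta$-density and the existence of a dense subspace (apart from $0$).

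For the non-spaceability, $X$ being a Banach space admits a continuous norm, so Proposition \ref{propospacesequence2} applies and reduces the problem to showing that $E_l$ fails to be dense in $X$ for some $l\geq 1$. The crux, and the step we expect to be the main obstacle, is an algebraic observation showing that in fact $E_l=\{0\}$ for every $l\geq 1$. Given $b\in E_l$ with witnesses $n>m\geq l$ and scalars $b_l,\ldots,b_n$, the identity $\sum_{j=l}^m b_j x_{n,j}=\sum_{j=m+1}^n b_j x_{n,j}$ rewrites as
\[
(1-x)^{n-m}\,b(x) \;=\; x^{m+1}\sum_{j=m+1}^n b_j\,x^{j-m-1}(1-x)^{n-j},
\]
where $b(x)=\sum_{j=l}^m b_j x^j(1-x)^{m-j}$ is precisely the element of $X$ under consideration. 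Since $x^{m+1}$ and $(1-x)^{n-m}$ are coprime in $\mathbb{R}[x]$, the polynomial $x^{m+1}$ must divide $b(x)$; but each summand of $b(x)$ has degree $j+(m-j)=m$, so $\deg b(x)\leq m$, forcing $b\equiv 0$. Thus $E_l=\{0\}$ is not dense in $X$, and Proposition \ref{propospacesequence2} yields the non-spaceability.
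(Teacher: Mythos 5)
Your proof is correct, and its overall architecture (abstract machinery for genericity plus Proposition \ref{propospacesequence2} for non-spaceability) matches the paper's. The one genuinely different step is the verification of the density condition. The paper proves the approximation statement directly: given $h$ with $h(0)=0$ and $p\geq 1$, it builds an auxiliary continuous function $g$ agreeing with $h(x)/x^p$ away from $0$ (and patched near $0$ where $h$ is small), applies Weierstrass to $g$, and re-expands $x^pP(x)$ in the Bernstein-type basis $x^k(1-x)^{n-k}$; this is in effect an elementary, self-contained proof that $\overline{x^K\mathbb{R}[x]}=\mathcal{C}_0([0,1])$. You instead identify $\mathrm{span}(x_{n,k}:K\leq k\leq n)=x^K\cdot\mathbb{R}_{n-K}[x]$ via the Bernstein basis and then invoke the M\"untz--Sz\'asz theorem with $\sum_{k\geq K}1/k=+\infty$. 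Your route is shorter and makes the structure of the span transparent, at the cost of citing a stronger classical theorem where the paper only needs Weierstrass; both are valid, and your span identification is exactly the reason the paper's basis re-expansion works. For non-spaceability, the paper appeals to its general observation that a rowwise free family $(x_{n,k})_{0\leq k\leq n}$ forces $E_l=\{0\}$ (Example \ref{applicationspropospacesequence} (4), the freeness here being that of the Bernstein basis), while you reprove the same fact by a coprimality/degree argument showing $x^{m+1}$ must divide a polynomial of degree at most $m$; these are two phrasings of the same linear-independence fact, and your version is self-contained. One minor housekeeping point: setting $x_{n,0}=0$ makes the full family $(x_{n,k})_{0\leq k\leq n}$ not free, but since Proposition \ref{propospacesequence2} only requires some $l\geq 1$ with $E_l$ not dense, your restriction to $l\geq 1$ sidesteps this harmlessly.
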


\begin{proof} Observe that for every integer $k\geq 1,$ $x^k(1-x)^{n-k}$ converges 
to $0$ uniformly on $[0,1],$ as $n\rightarrow +\infty.$ Let $\mu \subset\mathbb{N}$ be an increasing sequence. Then, according to Theorem \ref{thm-gen-approx-lem}, it suffices to prove that for every $\varepsilon>0,$ every $p\geq 1$ and every $h$ continuous on $[0,1]$ and vanishing at $0,$ one can find $n\in \mu$ and $a_p,\dots,a_n\in\mathbb{R}$ such that 
$\sup_{x\in [0,1]}\left\vert \sum_{k=p}^na_kx^k(1-x)^{n-k}-h(x)\right\vert <\varepsilon.$ \\
Since $h(0)=0$ and $h$ is a continuous function, there exists 
$\eta>0$ such that $\vert h(x)\vert<\varepsilon/3$ for $\vert x\vert<\eta.$ Define a continuous function 
$g$ on $[0,1]$ by 
\[
g(x)=\left\{\begin{array}{l}\displaystyle\frac{h(x)}{x^p}\hbox{ if }x\in[\eta,1]\\ \\
\displaystyle\frac{x}{\eta}\displaystyle\frac{h(x)}{\eta^p}\hbox{ if }x\in[0,\eta]\end{array}\right.
\]
Weierstrass' theorem ensures the existence of a polynomial $P(x)=\sum_{i=0}^lc_ix^i,$ with 
$l+p\in\mu$ (up to take $c_i=0$ for $d(P)+1\leq i\leq l$) satisfying 
\[
\sup_{x\in [0,1]}\vert P(x)-g(x)\vert<\varepsilon/3.
\]
We expand $P$ along the basis $x^k(1-x)^{l-k},$ $0\leq k\leq l,$ of polynomials of degree less than $l$ and write $P(x)=\sum_{i=0}^lb_ix^i(1-x)^{l-i}$. We set $n=l+p$ and we consider $x^pP(x)=\sum_{i=0}^lb_ix^{i+p}(1-x)^{l-i}=\sum_{k=p}^na_kx^{k}(1-x)^{n-k},$ 
with $a_k=b_{k-p}$ for $p\leq k\leq n$. Now observe that $x^pP(x)$ does the job. 
Indeed, for $0\leq x\leq \eta,$ we have 
\[
\begin{array}{rcl}\vert x^pP(x)-h(x)\vert&\leq &\vert x^p\vert \vert P(x)\vert+ \vert h(x)\vert\\
&\leq& \vert x^p\vert \vert P(x)-g(x)\vert +\vert x^p\vert \vert g(x)\vert + \vert h(x)\vert\\
&<& \varepsilon /3+ \vert h(x)\vert+\varepsilon/3<\varepsilon.\end{array}
\]
Now, for $x\in [\eta,1],$ we have 
\[
\vert x^pP(x)-h(x)\vert =\vert x^p\vert \vert P(x)-g(x)\vert<\varepsilon.
\]
The assertion is proved and we obtain the topological genericity as well as the existence of 
a dense universal subspace, except $0.$ Finally, using Proposition \ref{propospacesequence2} 
(see Examples \ref{applicationspropospacesequence} (4)), we conclude 
that the set of such generalized universal series, is not spaceable. 
\end{proof}

In the same spirit, we prove an analogue of Theorem \ref{B1} using classical Bernstein polynomials. 

\begin{proposition}\label{B2}Let $\mu \subset\mathbb{N}$ be an increasing sequence. There exists a sequence $a=(a_0,a_1,a_2,\dots)$ of real numbers such that, for every continuous real function $f$ on $(0,1),$ there exists an increasing sequence 
$(\lambda_n)\subset \mu$ such that 
\[
\sum_{k=0}^{\lambda_n-1}{\lambda_n \choose k}a_k x^k(1-x)^{\lambda_n-k}\rightarrow f(x),\hbox{ as }n\rightarrow +\infty,
\]
uniformly on every compact set in $(0,1).$ 
The set of such $\mu$-generalized universal series is a dense $G_\delta$ subset of $\mathbb{R}^{\mathbb{N}}$ and contains, apart from $0$, a dense subspace.  
\end{proposition}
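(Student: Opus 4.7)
The plan is to apply Theorem \ref{thm-gen-approx-lem} Part (A) in the setting $E=A=\mathbb{R}^{\mathbb{N}}$, $X=\mathcal{C}((0,1))$ endowed with the topology of uniform convergence on compact subsets of $(0,1)$, and (after the harmless shift $n\mapsto n+1$ to match the sum up to $\lambda_n-1$ in the statement) $x_{n,k}:=\binom{n+1}{k}x^{k}(1-x)^{n+1-k}$ for $0\leq k\leq n$. Among the assumptions listed at the beginning of Subsection \ref{S2S2}, only (v) requires a check: for any fixed $k\geq 0$ and any compact $L\subset(0,1)$ contained in $[a,b]$ with $0<a<b<1$, one has $\binom{n+1}{k}(1-x)^{n+1-k}\leq \binom{n+1}{k}(1-a)^{n+1-k}\to 0$, since the exponential decay outweighs the polynomial growth of $\binom{n+1}{k}$ in $n$; hence $x_{n,k}\to 0$ uniformly on $L$.

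The core of the argument is to verify condition (A)-(1)-(ii) of Theorem \ref{thm-gen-approx-lem}, which via Proposition \ref{prop1} reduces to showing that for every $g\in\mathcal{C}((0,1))$, every compact $L\subset(0,1)$, every $\varepsilon>0$ and every $K\geq 0$, there exist $n\in\mu$ with $n>K$ and real numbers $a_K,\dots,a_{n-1}$ such that
\[
\sup_{x\in L}\Bigl|\sum_{k=K}^{n-1}\binom{n}{k}a_{k}x^{k}(1-x)^{n-k}-g(x)\Bigr|<\varepsilon.
\]
To produce such an approximation, I would pick $0<a<b<1$ with $L\subset[a,b]$ and extend $g|_{[a,b]}$ to a continuous function $\tilde g$ on $[0,1]$ with $\tilde g(0)=\tilde g(1)=0$, by linear interpolation on $[0,a]$ and $[b,1]$. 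The classical Bernstein theorem then yields
\[
B_{n}(\tilde g)(x)=\sum_{k=0}^{n}\binom{n}{k}\tilde g(k/n)x^{k}(1-x)^{n-k}\longrightarrow \tilde g(x)
\]
uniformly on $[0,1]$, hence uniformly to $g$ on $L$. The choice $\tilde g(1)=0$ kills the $k=n$ term, while each of the first $K$ terms $\binom{n}{k}\tilde g(k/n)x^{k}(1-x)^{n-k}$, $0\leq k<K$, tends to zero uniformly on $L$ by the same exponential-versus-polynomial argument used to verify (v). Picking $n\in\mu$ large enough that each of the three error contributions is less than $\varepsilon/3$ on $L$, and setting $a_{k}:=\tilde g(k/n)$ for $K\leq k\leq n-1$, yields the desired inequality.

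By Theorem \ref{thm-gen-approx-lem} (A)-(1), the set $\mathcal{U}^{\mu}(\mathcal{X})\cap\mathbb{R}^{\mathbb{N}}$ is then non-empty and, in fact, a dense $G_\delta$-subset of $\mathbb{R}^{\mathbb{N}}$. Moreover the approximation scheme above goes through verbatim for every increasing subsequence $\nu\subset\mu$, since it only requires $n$ to be chosen arbitrarily large within $\mu$; Theorem \ref{thm-gen-approx-lem} (A)-(2) then supplies the dense subspace (apart from $0$) contained in $\mathcal{U}^{\mu}(\mathcal{X})\cap\mathbb{R}^{\mathbb{N}}$. The main technical point is controlling simultaneously on $L$ the three small contributions—the classical Bernstein error, the dropped initial terms, and the $k=n$ term—which is precisely what the assumption $L\subset(0,1)$ allows, since it forces both $x$ and $1-x$ to stay bounded away from $1$ on $L$.
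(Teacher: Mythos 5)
Your proof is correct, but the key approximation step is carried out quite differently from the paper's. The paper does \emph{not} invoke the classical Bernstein convergence theorem at all: it takes the target $h$, divides by $x^p(1-x)$ (which is legitimate on a compact $L\subset(0,1)$), approximates the quotient by an ordinary polynomial $P$ via Weierstrass, re-expands $P$ in the basis $\binom{l+p+1}{k+p}x^k(1-x)^{l-k}$ of polynomials of degree $\leq l$, and then reads off the coefficients $a_k$ from $x^p(1-x)P(x)$. You instead extend $g|_{[a,b]}$ to $\tilde g$ on $[0,1]$ with $\tilde g(0)=\tilde g(1)=0$, take the genuine Bernstein polynomial $B_n(\tilde g)$ with the explicit sample values $a_k=\tilde g(k/n)$, and discard the first $K$ terms and the $k=n$ term, each of which is uniformly negligible on $L$ because $1-x$ is bounded away from $1$ there (and $\tilde g(1)=0$ kills the top term exactly). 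Both arguments correctly verify the density condition of Proposition~\ref{prop1} with the required lower bound $K$ on the valuation, and both then conclude via Theorem~\ref{thm-gen-approx-lem} (A)-(1) and (A)-(2). Your route has the advantage of producing completely explicit universal coefficients (sampled values of a continuous function), which ties the result more directly to the Korovkin-type convergence that motivates the example; the paper's route is more flexible in that it never needs the factor $\binom{n}{k}$ to match the Bernstein weights exactly, which is why essentially the same computation also handles Theorem~\ref{B1}, where the binomial coefficients are absent and your sampling trick would not apply.
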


\begin{proof} In order to apply the results of Section \ref{S2}, we have to check that for 
every integer $k\geq 0,$ the polynomials $B_{n,k}(x)={n \choose k}x^k(1-x)^{n-k}$ converges  
uniformly on every compact set in $(0,1).$ To see this, it suffices to observe that, for any integer $k\geq 0$ and 
any compact set $L\subset (0,1),$ 
there exists $n_0$ large enough such that for all $n\geq n_0,$ one has 
\[
\sup_{x\in L}\vert B_{n,k}(x)\vert\leq  B_{n,k}(1/\sqrt{n})\hbox{ and }B_{n,k}(1/\sqrt{n})\rightarrow 0,\hbox{ 
as }n\rightarrow +\infty.
\]
Let $\mu \subset\mathbb{N}$ be an increasing sequence. Now the proof of the proposition works as that of Theorem \ref{B1} with easy modifications. Indeed 
according to Theorem \ref{thm-gen-approx-lem}, it suffices to prove that for every $\varepsilon>0,$ every $p\geq 1,$ every continuous function $h$ on $(0,1)$ and every compact set $L\subset (0,1),$ one can find $n\in \mu$ and $a_p,\dots,a_n\in\mathbb{R}$ such that 
$\sup_{x\in L}\left\vert \sum_{k=p}^{n-1}{n\choose k}a_kx^k(1-x)^{n-k}-h(x)\right\vert <\varepsilon.$ \\
Weierstrass' theorem ensures the existence of a polynomial $P(x)=\sum_{i=0}^lc_ix^i,$ with $l+p+1\in\mu,$ (up to take $c_i=0$ for $d(P)+1\leq i\leq l$) satisfying 
\[
\sup_{x\in L}\vert P(x)-\frac{h(x)}{x^p(1-x)}\vert<\varepsilon/\inf_{x\in L}\vert x(1-x)\vert.
\]
We expand $P$ along the basis ${l+p+1\choose k+p}x^k(1-x)^{l-k},$ $0\leq k\leq l,$ of polynomials of degree less than $l$ and write $P(x)=\sum_{i=0}^l{l+p+1\choose i+p}b_ix^i(1-x)^{l-i}$. We set $n=l+p+1$ and we consider $x^p(1-x)P(x)=\sum_{i=0}^l{l+p+1\choose i+p}b_ix^{i+p}(1-x)^{l+1-i}=\sum_{k=p}^{n-1}
{n\choose k}a_kx^{k}(1-x)^{n-k},$ 
with $a_k=b_{k-p}$ for $p\leq k\leq n-1$. Now observe that 
$\sup_{x\in L}\left\vert x^p(1-x)P(x)-h(x)\right\vert <\varepsilon.$ 
\end{proof}

\begin{remark}  {\rm Let $a=(a_n)_{n\geq 0}$ be a universal sequence 
of real numbers such that the set of Bernstein sums $\sum_{j=0}^{p-1}{p \choose j}a_j x^j(1-x)^{p-j},$ 
for $p$ a 
prime number, is universal in the space of continuous functions on $(0,1)$ endowed with the topology 
of uniform convergence on compact sets (Proposition \ref{B2}). We can easily construct a non-continuous (but Lebesgue measurable) function $f$ such that the sequence of all its 
Bernstein sums $\left(\sum_{j=0}^{n}{n \choose j}f(\frac{j}{n}) x^j(1-x)^{n-j}\right)_n$ is  
universal in the same topological space. To do this, it suffices 
to set $f\left(\frac{j}{p}\right)=a_j,$ for $j=0,\dots, p-1,$ if $p$ is a prime number, and $f(t)=0$ otherwise (in fact $f$ is even equal to $0$ Lebesgue almost everywhere). In comparison, we recall that if $g$ is a continuous function on $[0,1]$ then $\sum_{j=0}^{n}{n \choose j}g(\frac{j}{n}) x^j(1-x)^{n-j}\rightarrow g(x),$ 
as $n\rightarrow +\infty,$ uniformly on $[0,1]$.
} 
\end{remark}

\medskip{}
In the context of measurable functions, Proposition \ref{B2} takes the following form. 

\begin{corollary}\label{B3} There exists a sequence $a=(a_0,a_1,a_2,\dots)$ of real numbers such that, for every 
$\sigma$-finite Borel measure $\mu$ on $[0,1]$ with $\mu(\{0\})=\mu(\{1\})=0$ and 
every $\mu$-measurable function $f$ on $[0,1],$ there exists a sequence 
$(\lambda_n)$ of integers such that 
\[
\sum_{k=0}^{\lambda_n-1}{\lambda_n \choose k}a_k x^k(1-x)^{\lambda_n-k}\rightarrow f(x),\hbox{ as }n\rightarrow +\infty,
\ \mu-\hbox{a.e.}
\]
The set of such sequences is a $G_\delta$ and dense subset of $\mathbb{R}^{\mathbb{N}}$ and contains, apart from $0$, a dense subspace.
\end{corollary}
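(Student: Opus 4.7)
The plan is to deduce Corollary \ref{B3} from Proposition \ref{B2} by showing that every sequence $a$ which is universal in the sense of Proposition \ref{B2} (with index set $\mathbb{N}$) is automatically universal in the sense claimed in the corollary. Once this inclusion of universal sets is established, density and the presence of a dense universal subspace (apart from $0$) will follow for free from the corresponding conclusions of Proposition \ref{B2}, since any set that contains a dense set is dense, and any set that contains a dense subspace contains a dense subspace.

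So let $a$ be B2-universal, let $\mu$ be a $\sigma$-finite Borel measure on $[0,1]$ with $\mu(\{0\})=\mu(\{1\})=0$, and let $f$ be a $\mu$-measurable function. Using $\sigma$-finiteness on $(0,1)$, I would write $(0,1)=\bigcup_{j\geq 1}B_j$ with $(B_j)$ increasing and $\mu(B_j)<\infty$. For every integer $m\geq 1$, Lusin's theorem applied to the finite measure space $(B_m,\mu|_{B_m})$ provides a compact set $K_m\subset B_m$ with $f|_{K_m}$ continuous and $\mu(B_m\setminus K_m)<2^{-m}$; since $K_m$ is closed in the metric space $(0,1)$, Tietze's extension theorem yields a continuous extension $\tilde f_m:(0,1)\to\mathbb{R}$ of $f|_{K_m}$.

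Next, applying the B2-universality of $a$ to the continuous function $\tilde f_m$ on the compact subset $K_m\subset (0,1)$, I would inductively select a strictly increasing sequence of integers $(\lambda_m)$ such that
\[
\sup_{x\in K_m}\bigg|\sum_{k=0}^{\lambda_m-1}\binom{\lambda_m}{k}a_kx^k(1-x)^{\lambda_m-k}-\tilde f_m(x)\bigg|<\frac{1}{m}.
\]
Since $\tilde f_m=f$ on $K_m$, this forces the $\lambda_m$-th Bernstein sum to lie within $1/m$ of $f(x)$ for every $x\in K_m$. A Borel--Cantelli argument (with $\sum_m\mu(B_m\setminus K_m)\leq\sum_m 2^{-m}<\infty$) shows that the set $N:=\limsup_m(B_m\setminus K_m)$ is $\mu$-null; for any $x\in(0,1)\setminus N$, since $(B_j)$ is increasing and covers $(0,1)$, one has $x\in B_m$ and $x\in K_m$ for all $m$ sufficiently large, so the Bernstein sums of order $\lambda_m$ converge to $f(x)$, giving convergence $\mu$-almost everywhere.

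The main obstacle I expect is producing a \emph{single} subsequence $(\lambda_m)$ that works $\mu$-a.e. simultaneously for an \emph{arbitrary} $\sigma$-finite $\mu$; the combination of $\sigma$-finite exhaustion, Lusin approximation with geometrically decaying error $2^{-m}$, and a single application of Borel--Cantelli is what makes the diagonal construction succeed. Once the inclusion of universal sets is in hand, density and the existence of a dense universal subspace (apart from $0$) follow from Proposition \ref{B2}; the $G_\delta$-character can be obtained directly by expressing the corollary's universality condition as a countable intersection of open sets, indexed by a countable dense family of target evaluation points in $(0,1)$ and rational target values, exactly as is done in the abstract theory of Section \ref{S2}.
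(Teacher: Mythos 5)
Your proposal is correct and follows essentially the same route as the paper: the paper's proof is the one-line observation that every $\mu$-measurable function is the $\mu$-a.e.\ pointwise limit of continuous functions, combined with a diagonal application of Proposition \ref{B2}. Your Lusin--Tietze--Borel--Cantelli construction is precisely the standard proof of that approximation fact, spliced together with the same diagonal selection of $(\lambda_m)$, so you have simply supplied the details the paper leaves implicit.
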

\begin{proof} Since every $\mu$-measurable function is the almost everywhere pointwise limit of a sequence of continuous functions, Proposition \ref{B2} yields the result.
\end{proof}

The following result holds too. The proof works as that of Proposition \ref{B2} with obvious modifications.

\begin{proposition}\label{B3}Let $\mu \subset\mathbb{N}$ be an increasing sequence. There exists a sequence $a=(a_0,a_1,\dots)$ of real numbers such that, for every continuous real function $f$ on $(0,1],$ there exists an increasing sequence 
$(\lambda_n)\subset \mu$ such that 
\[
\sum_{k=0}^{\lambda_n}{\lambda_n \choose k}a_k x^k(1-x)^{\lambda_n-k}\rightarrow f(x),\hbox{ as }n\rightarrow +\infty,
\]
uniformly on every compact set in $(0,1].$ 
The set of such $\mu$-generalized universal series is a dense $G_\delta$ subset of $\mathbb{R}^{\mathbb{N}}$ and contains, apart from $0$, a dense subspace.  
\end{proposition}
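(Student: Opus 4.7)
The proof runs parallel to that of Proposition \ref{B2}, with two adjustments: the domain of the test functions is $(0,1]$ instead of $(0,1)$, and the sum now runs up to $k=\lambda_n$ (so the top-degree term $a_{\lambda_n}x^{\lambda_n}$ is present), hence we will multiply an auxiliary polynomial by $x^p$ alone rather than by $x^p(1-x)$. Throughout, $X = \mathcal{C}((0,1])$ is endowed with the topology of uniform convergence on compact subsets of $(0,1]$, $E = A = \mathbb{R}^{\mathbb{N}}$ and $T_0 = \mathrm{id}$.

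First, I would verify Assumption (v) of Section \ref{S2S2}, namely that for every fixed $k \geq 0$ the Bernstein polynomial $B_{n,k}(x) := \binom{n}{k} x^k (1-x)^{n-k}$ converges in $X$ as $n \to +\infty$. Since $B_{n,k}$ is unimodal with maximum at $x=k/n$, for any compact $L \subset (0,1]$ and every $n$ large enough that $k/n < \inf L$ one has $\sup_L B_{n,k} = B_{n,k}(\inf L) = \binom{n}{k}(\inf L)^k(1-\inf L)^{n-k} \to 0$, so $B_{n,k} \to 0$ in $X$. By Theorem \ref{thm-gen-approx-lem}, the whole statement then reduces to the following approximation lemma: for every $\varepsilon>0$, every $p \geq 1$, every $h \in \mathcal{C}((0,1])$ and every compact $L \subset (0,1]$, there exist $n \in \mu$ and real coefficients $a_p,\dots,a_n$ with
\[
\sup_{x \in L} \left| \sum_{k=p}^{n} \binom{n}{k} a_k x^k (1-x)^{n-k} - h(x) \right| < \varepsilon.
\]

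To prove this lemma, note that $\inf_L x > 0$, so $x \mapsto h(x)/x^p$ is continuous on $L$, and Weierstrass' theorem yields a polynomial $P(x) = \sum_{i=0}^l c_i x^i$ with $l+p \in \mu$ (after padding with zero coefficients, since $\mu$ is infinite) such that $\sup_L |P(x) - h(x)/x^p| < \varepsilon$. Expanding $P$ in the basis $\{\binom{l+p}{i+p} x^i (1-x)^{l-i}\}_{0 \leq i \leq l}$ of polynomials of degree $\leq l$, write $P(x) = \sum_{i=0}^l \binom{l+p}{i+p} b_i x^i (1-x)^{l-i}$; setting $n := l+p$ and $a_k := b_{k-p}$ for $p \leq k \leq n$ gives
\[
x^p P(x) = \sum_{i=0}^l \binom{n}{i+p} b_i x^{i+p} (1-x)^{n-(i+p)} = \sum_{k=p}^{n} \binom{n}{k} a_k x^k (1-x)^{n-k},
\]
and since $|x|^p \leq 1$ on $L \subset (0,1]$, we conclude $\sup_L |x^p P(x) - h(x)| < \varepsilon$. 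The dense $G_\delta$ character and the existence of a dense subspace (apart from $0$) of $\mu$-generalized universal series then follow from Parts (A)-(1) and (A)-(2) of Theorem \ref{thm-gen-approx-lem}, the latter applied after noting that the construction above goes through unchanged for every increasing subsequence $\nu \subset \mu$. The only delicate point compared to Proposition \ref{B2} is the check of Assumption (v) near $x=1$, which I expect to be the main obstacle; it is however handled above via the unimodality of $B_{n,k}$ together with $k/n \to 0$ for fixed $k$.
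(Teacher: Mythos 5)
Your proof is correct and is exactly the argument the paper intends: the paper gives no separate proof of this proposition, stating only that it ``works as that of Proposition \ref{B2} with obvious modifications,'' and the two modifications you identify (dividing $h$ by $x^p$ alone since $1-x$ may vanish on $L\subset(0,1]$, so the Bernstein index runs up to $n$, and re-checking Assumption (v) on compacts touching $x=1$ via unimodality of $B_{n,k}$) are precisely those modifications. Nothing is missing.
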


\section{Generalized universal Taylor series on simply connected domains}\label{SectionBernstein}

In the whole section, $\Omega$ will denote a simply connected domain in $\mathbb{C}$. Let $\mathcal{H}\left(\Omega\right)$ be the Fr\'echet space of all holomorphic functions in $\Omega$, endowed with the translation-invariant metric $d_{\mathcal{H}\left(\Omega\right)}$, associated with the family of semi-norms $\left\Vert \cdot \right\Vert _n$ given by 
$\left\Vert f \right\Vert _n=\sup_{L_n}\left\vert f\right\vert,$ 
where $\left(L_n\right)_n$ is an exhaustion of compact subsets of $\Omega.$ 

Let us recall a result due to Nestoridis \cite{Nestor1} completed in the version below by \cite[Theorem 4.2]{Men}.

\begin{theorem}\label{thm-Nest} Let $\xi \in \Omega$. There exists a holomorphic function $f\in\mathcal{H}\left(\Omega\right)$ such that, for every compact subset $K\subset \mathbb{C}$, $K\cap \Omega=\emptyset$, $K^c$ connected, every function $h$ continuous on $K$ and holomorphic in the interior of $K$, there exists a sequence $\left(\lambda _n\right)_n$ such that
\[
\sup_{z\in K}\left\vert\sum_{k=0}^{\lambda _n}\frac{f^{(k)}\left(\xi \right)}{k!}\left(z-\xi \right)^k-h(z)\right\vert\rightarrow 0,\mbox{ as }n\rightarrow +\infty.
\]
Moreover, the set of such functions is a dense $G_{\delta}$ subset of $\mathcal{H}\left(\Omega\right)$ and contains, apart from $0$, a dense subspace and an infinite dimensional closed subspace of $\mathcal{H}\left(\Omega\right)$.
\end{theorem}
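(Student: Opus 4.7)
The plan is to recognize Theorem \ref{thm-Nest} as the classical universal Taylor series result and derive it by applying Theorem \ref{thm-gen-approx-lem} Part (A) in the degenerate case $x_{n,k} = x_k$ independent of $n$. I would set $E = \mathcal{H}(\Omega)$, with its Fr\'echet topology defined by the sup seminorms on an exhaustion $(L_j)_j$ of $\Omega$ by compact sets with connected complements in $\mathbb{C}$ (such an exhaustion exists because $\Omega$ is simply connected). Take $A = \mathbb{C}^{\mathbb{N}}$ with the Cartesian topology, $T_0(f) = (f^{(k)}(\xi)/k!)_{k\geq 0}$ (continuous by the Cauchy estimates), and $\mathcal{F} = (f_k)_k = ((z-\xi)^k)_k$. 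The polynomials in $(z-\xi)$ are dense in $\mathcal{H}(\Omega)$ by Runge's theorem, since $\Omega$ is simply connected. Choose a countable family $(K_m)_m$ of compact subsets $K \subset \mathbb{C}\setminus \Omega$ with $K^c$ connected, cofinal in the sense that any such $K$ is contained in some $K_m$. Set $X = \prod_{m\geq 1} A(K_m)$, the product Fr\'echet space of functions continuous on $K_m$ and holomorphic in its interior, and $x_{n,k} = x_k = ((z-\xi)^k|_{K_m})_m \in X$. Assumptions (i)--(v) of Subsection \ref{S2S2} are then immediate.

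Next I would verify the approximation lemma (A)-(1)-(ii). Given $\varepsilon > 0$ and a target $(h_m)_m \in X$, the product topology reduces us to finitely many components $m \leq M$ and one seminorm index $j$. The task is to produce a polynomial $P$ with $\sup_{K_m}|P - h_m| < \varepsilon$ for $m \leq M$ and $\sup_{L_j}|P| < \varepsilon$; then setting $f = P$ and $n \geq \deg P$ yields $S_n^{\mathcal{X}}\circ T_0(P) = P$ (on each $K_m$), which fulfils (A)-(1)-(ii). To build $P$, consider the compact set $C = L_j \cup K_1 \cup \dots \cup K_M$ and define $g \in C(C)$ by $g = h_m$ on $K_m$ and $g = 0$ on $L_j$. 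Then $g$ is continuous on $C$ and holomorphic on the interior of $C$, so Mergelyan's theorem supplies a polynomial $P$ with $\sup_C|P - g| < \varepsilon$, provided $C^c$ is connected.

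The main obstacle is precisely this topological verification that $C^c$ is connected. Here I use in a crucial way both the simple connectivity of $\Omega$ (which allows choosing each $L_j$ with $L_j^c$ connected in $\mathbb{C}$) and the disjointness $L_j \subset \Omega$, $K_m \subset \mathbb{C}\setminus\Omega$: a standard topological argument shows that the union of compacta with connected complements, lying in disjoint ``sides'' of a simply connected domain, still has connected complement. This is the geometric heart of the classical theory and is where the hypothesis on $\Omega$ is essentially used.

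With the approximation lemma in hand, Part (A)-(1) of Theorem \ref{thm-gen-approx-lem} gives that $\mathcal{U}(\mathcal{X}) \cap E$ is a dense $G_\delta$ subset of $\mathcal{H}(\Omega)$. Since the Mergelyan construction is insensitive to the choice of subsequence (we may always take $n$ as large as desired in any prescribed $\nu \subset \mathbb{N}$), the same lemma holds along every subsequence, so Part (A)-(2) upgrades this to a dense subspace apart from $0$, and Part (A)-(3) delivers spaceability once we observe that $\mathcal{H}(\Omega)$ admits a continuous norm, for instance the sup norm on any closed disk contained in $\Omega$. Finally, unfolding the definition of $\mathcal{U}(\mathcal{X}) \cap E$ together with the cofinality of $(K_m)_m$ yields exactly the statement of Theorem \ref{thm-Nest}.
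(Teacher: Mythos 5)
Your reduction to Theorem \ref{thm-gen-approx-lem} is the right general idea (and is essentially how the paper treats such statements --- note, though, that the paper does not prove Theorem \ref{thm-Nest} at all: it is quoted from Nestoridis and from \cite[Theorem 4.2]{Men}). However, your choice of target space $X=\prod_{m\ge 1}A(K_m)$ introduces a genuine error. Density of the orbit in the \emph{product} requires, for every finite $M$ and every choice of targets $h_1,\dots,h_M$, a single polynomial $P$ with $\sup_{K_m}|P-h_m|<\varepsilon$ for all $m\le M$ simultaneously. Since the family $(K_m)_m$ of Lemma \ref{lem-geo-melanes} is cofinal among \emph{all} admissible compacta, its members necessarily overlap (indeed some are nested), and simultaneous approximation of two functions that disagree at a common point of $K_1\cap K_2$ is impossible. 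So the set of elements universal in your sense is empty, not a dense $G_\delta$. Relatedly, the ``standard topological argument'' you invoke for the connectedness of $C^c$, $C=L_j\cup K_1\cup\dots\cup K_M$, is false for $M\ge 2$: take $\Omega=\mathbb{D}$, $K_1=\{2e^{i\theta}:0\le\theta\le\pi\}$ and $K_2=\{2e^{i\theta}:\pi\le\theta\le 2\pi\}$; each is disjoint from $\Omega$ and has connected complement, but $K_1\cup K_2$ is the full circle $|z|=2$, whose complement is disconnected, and adding $L_j\subset\mathbb{D}$ does not repair this. The lemma you need is only true for a \emph{single} $K$ together with $L_j$.

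The standard repair --- and the one the paper's formalism in Section \ref{SectionBernstein} is built for (``instead of a single space $X$, we will consider a sequence of metrizable vector spaces $X_l$'') --- is to fix one $m$ at a time: apply Theorem \ref{thm-gen-approx-lem} Part (A) with $X=A(K_m)$ alone (here Mergelyan applies because $(K_m\cup L_j)^c$ \emph{is} connected for a single $K_m$, using $K_m\cap\Omega=\emptyset$ and the simple connectivity of $\Omega$), obtain that each $\mathcal{U}_m$ is a dense $G_\delta$ containing a dense subspace, and then intersect over $m$ via Baire's theorem. The algebraic genericity and spaceability of the countable intersection require the versions of Parts (A)-(2) and (A)-(3) for a countable family of target spaces, which the paper asserts hold; the spaceability statement is precisely what \cite[Theorem 4.2]{Men} supplies. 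With this modification the rest of your argument (density of polynomials by Runge, $S_n^{\mathcal{X}}\circ T_0(P)=P$ for $n\ge\deg P$, continuity of $T_0$, existence of a continuous norm on $\mathcal{H}(\Omega)$) goes through.
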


An crucial tool to prove the existence of such universal Taylor series is the following lemma, stated in this form in \cite{Nestor1}.

\begin{lemma}\label{lem-geo-melanes}There exists a sequence of compact sets $\left(K_n\right)_n\subset \mathbb{C}$, with $K_n\cap \Omega=\emptyset$ and $K_n^c$ connected for every $n\geq 0$, such that every compact set $K\subset \mathbb{C}$, with $K\cap \Omega=\emptyset$ and $K^c$ connected, is contained in some $K_n$, $n\geq 0$.
\end{lemma}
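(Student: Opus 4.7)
The plan is to construct the sequence $(K_n)_n$ explicitly. The most natural candidates are the compact sets $\overline{D(0,n)}\cap\Omega^c$ for $n\geq 1$: each is compact and disjoint from $\Omega$, and any compact $K$ with $K\cap\Omega=\emptyset$ is contained in some $\overline{D(0,n)}\cap\Omega^c$ once $n\geq \sup_{z\in K}|z|$. The only issue is that the complement $\{|z|>n\}\cup\Omega$ of $\overline{D(0,n)}\cap\Omega^c$ needs to be connected, which may fail.

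If $\Omega$ is unbounded, then $\Omega\cap\{|z|>n\}\ne\emptyset$ for every $n$, so $\{|z|>n\}\cup\Omega$ is the union of two intersecting connected sets, hence connected, and the sets $K_n:=\overline{D(0,n)}\cap\Omega^c$ immediately do the job. If $\Omega$ is bounded, then for $n$ large $\{|z|>n\}\cap\Omega=\emptyset$ and a modification is required. Using that $\Omega$ is simply connected (so $\widehat{\mathbb{C}}\setminus\Omega$ is path-connected), I would fix a countable family $(\gamma_m)_m$ of arcs from $\partial\Omega$ to $\infty$ lying in $\Omega^c$, dense in the sense that any such arc is uniformly approximable by some $\gamma_m$ (for instance, piecewise linear arcs with rational vertices that remain in $\Omega^c$), and define
\[
K_{n,m}:=(\overline{D(0,n)}\cap\Omega^c)\setminus\{z:d(z,\gamma_m)<1/m\}.
\]
Each $K_{n,m}$ is compact, disjoint from $\Omega$, and its complement $\{|z|>n\}\cup\Omega\cup\{z:d(z,\gamma_m)<1/m\}$ is now connected, since the removed open corridor touches both $\Omega$ (near its starting point on $\partial\Omega$) and the region $\{|z|>n\}$ (along the unbounded portion of $\gamma_m$). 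Re-indexing the double sequence $(K_{n,m})_{n,m\geq 1}$ as a single sequence yields the desired family.

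For the exhausting property in the bounded case, fix a compact $K$ with $K\cap\Omega=\emptyset$ and $K^c$ connected. Since $K^c$ is connected and contains both $\Omega$ and $\{|z|>N\}$ for $N$ large, there is a continuous path in $K^c$ joining a point of $\Omega$ to $\{|z|>N\}$; its portion in $\Omega^c$ produces an arc $\gamma$ from $\partial\Omega$ to $\infty$ lying in $K^c\cap\Omega^c$. As $K^c$ is open and contains $\gamma$, the arc has positive distance from $K$. I would then choose $\gamma_m$ close enough to $\gamma$ (in uniform distance) and $1/m$ small enough so that the open corridor $\{z:d(z,\gamma_m)<1/m\}$ misses $K$ entirely; together with $n\geq\sup_{z\in K}|z|$ this gives $K\subset K_{n,m}$.

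The main obstacle is constructing the countable family $(\gamma_m)_m$ of corridor arcs and verifying that for any admissible $K$ some $\gamma_m$ approximates a suitable avoiding path in $K^c\cap\Omega^c$. This amounts to showing that the set of arcs from $\partial\Omega$ to $\infty$ in $\Omega^c$ admits a countable family which is dense for uniform approximation on bounded portions; the standard resolution is to take piecewise linear arcs with rational vertices lying in $\Omega^c$, concatenated with an outgoing rational ray to handle the unbounded end, and to invoke openness of $\Omega^c$ to justify the perturbation.
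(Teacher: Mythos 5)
The paper itself does not prove this lemma (it is quoted from Nestoridis \cite{Nestor1}), and the standard argument there is precisely a corridor construction of the kind you describe, so your overall strategy is the right one; in particular your treatment of the unbounded case is correct and complete. The gap is in the one step you yourself flag as the main obstacle, and the justification you offer for it is wrong: you propose to approximate $\gamma$ by a rational piecewise-linear arc $\gamma_m$ \emph{lying in $\Omega^c$} and to ``invoke openness of $\Omega^c$'' to justify the perturbation. But $\Omega$ is open, so $\Omega^c$ is closed, and it may have empty interior along parts of $\partial\Omega$ (think of $\Omega=\mathbb{D}\setminus[0,1)$, where $\Omega^c$ near the slit is one-dimensional). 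An arc $\gamma\subset\Omega^c$ hugging such a piece of the boundary need not admit any rational PL approximant inside $\Omega^c$, and an arc starting ``on $\partial\Omega$'' cannot in general have a rational initial vertex. As written, the density of your countable family $\left(\gamma_m\right)_m$ is therefore not established.

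The repair is to notice that the constraint $\gamma_m\subset\Omega^c$ is never used: the set $K_{n,m}=\left(\overline{D(0,n)}\cap\Omega^c\right)\setminus\{z:\,d(z,\gamma_m)<1/m\}$ is automatically contained in $\Omega^c$, and for connectedness of $K_{n,m}^c$ you only need the corridor to meet $\Omega$ and to be unbounded. So take for $\left(\gamma_m\right)_m$ \emph{all} piecewise-linear arcs with finitely many rational vertices followed by an outgoing rational ray, with no membership condition on $\Omega^c$. Given an admissible $K$, choose $\gamma$ joining a point of $\Omega$ to $\infty$ inside the open connected set $K^c$; since $\gamma$ is closed, disjoint from the compact $K$, one has $\delta:=d(\gamma,K)>0$, and openness of $K^c$ (not of $\Omega^c$) lets you approximate $\gamma$ uniformly within $\delta/2$ by such a rational PL arc. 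Taking $1/m<\delta/2$, the corridor misses $K$ while still containing points of $\Omega$ near $\gamma(0)$. This change also spares you the extraction of the ``portion of the path lying in $\Omega^c$'', which as stated needs a small additional argument. With these modifications your proof is complete.
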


We will see that, in order to make the link with the formalism of Section \ref{S2}, it was necessary to state such a lemma from now.

In this section, we will be interested in restricted generalized universal series in the sense of Theorem \ref{thm-Nest}. But first of all, let us quote an easier result, which is a combination of Seleznev's Theorem together with the analogue of Lemma \ref{lem-geo-melanes}, where $\Omega$ is replaced by $\{\xi\}$, $\xi\in \mathbb{C}$ (\cite{luh}, see also \cite[Part B-1, Theorem 4]{bgnp}).

\begin{theorem}\label{selez}Let $\xi \in \mathbb{C}$. There exists a sequence $\left(a_n\right)_n\subset \mathbb{C}$ such that, for every compact set $K\subset \mathbb{C}\setminus \{\xi\}$, $K^c$ connected, every function $h$ continuous on $K$ and holomorphic in the interior of $K$, there exists an increasing sequence $\left(\lambda _n\right)_n$ such that
\[
\sup_{z\in K}\left\vert\sum_{k=0}^{\lambda _n}a_k\left(z-\xi\right)^k-h(z)\right\vert\rightarrow 0\mbox{ as }
n\rightarrow +\infty.
\]
\end{theorem}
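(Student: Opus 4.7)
My approach is to derive Theorem \ref{selez} from the abstract theory of Subsection \ref{S2S2} combined with a Mergelyan-type density lemma. First I would invoke the analogue of Lemma \ref{lem-geo-melanes} with $\Omega$ replaced by the singleton $\{\xi\}$ (due to \cite{luh}, see \cite[Part B-1, Theorem 4]{bgnp}): there is a sequence $(K_m)_{m\geq 1}$ of compact subsets of $\mathbb{C}\setminus\{\xi\}$ with $K_m^c$ connected such that every compact $K\subset\mathbb{C}\setminus\{\xi\}$ with $K^c$ connected is contained in some $K_m$. Since Mergelyan's theorem makes polynomials in $(z-\xi)$ dense in the Banach space $A(K):=\{h\in C(K):h|_{\mathrm{int}(K)}\text{ holomorphic}\}$ equipped with $\|\cdot\|_K$, it suffices to produce a single $(a_k)_k\in\mathbb{C}^{\mathbb{N}}$ whose partial sums $S_n(z)=\sum_{k=0}^n a_k(z-\xi)^k$ form a dense subset of $A(K_m)$ for every $m$: a standard triangle-inequality argument then yields, for every good $K\subset K_m$ and every $h\in A(K)$, a subsequence $(\lambda_n)_n$ with the desired convergence on $K$.

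For each fixed $m$ this places me exactly in the setting of Proposition \ref{prop1} with $E=A=\mathbb{C}^{\mathbb{N}}$, $X=A(K_m)$, $x_{n,k}=(z-\xi)^k$ (independent of $n$), $T_0=\mathrm{id}$ and $\mu=\mathbb{N}$. The criterion to verify is that for every $N\geq 0$, $\bigcup_{n\geq N}\mathrm{span}\bigl((z-\xi)^k,\,N\leq k\leq n\bigr)$ is dense in $A(K_m)$. The crucial use of $\xi\notin K_m$ goes as follows: given any polynomial $Q$, the function $z\mapsto Q(z)/(z-\xi)^N$ is holomorphic on $\mathbb{C}\setminus\{\xi\}$, hence a fortiori lies in $A(K_m)$; Mergelyan's theorem (valid since $K_m^c$ is connected) produces a polynomial $T$ with $\|T-Q/(z-\xi)^N\|_{K_m}<\varepsilon$, so that $(z-\xi)^N T(z)$ is a polynomial of valuation $\geq N$ satisfying $\|(z-\xi)^N T-Q\|_{K_m}\leq \max_{K_m}|z-\xi|^N\cdot\varepsilon$. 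Combined with Mergelyan's density of polynomials in $A(K_m)$ itself, this yields the sought density.

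With the density verified for each $m$, Proposition \ref{prop1} (equivalently Theorem \ref{thm-gen-approx-lem}(A)-(1)) produces a dense $G_\delta$ subset $\mathcal{U}_m\subset\mathbb{C}^{\mathbb{N}}$ of sequences that are universal for $A(K_m)$; intersecting the countable family $(\mathcal{U}_m)_m$ via Baire's theorem gives the simultaneously universal sequence required by the statement. The main obstacle, and the reason I treat the $K_m$ one at a time rather than packaging them into a single target $X=\prod_m A(K_m)$, is that density in such a product would demand simultaneous polynomial approximation on finite unions $K_{m_1}\cup\cdots\cup K_{m_r}$, whose complements need not be connected; Mergelyan then fails and Proposition \ref{prop1}'s density criterion cannot be verified directly on the product. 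The Baire-intersection approach sidesteps this obstacle cleanly.
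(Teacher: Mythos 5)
Your argument is correct, but it is worth noting that the paper does not actually prove Theorem \ref{selez}: it is quoted as a known result (Seleznev's theorem in Luh's form, combined with the exhaustion lemma, citing \cite{luh} and \cite[Part B-1, Theorem 4]{bgnp}), and the paper's real work in that section is the generalized version, Theorem \ref{gen-selez}, which it derives by replacing the single target space $X$ by the countable family $X_l=(\mathcal{H}(\mathbb{C}),\|\cdot\|_{K_l})$ inside the abstract formalism. Your proof reconstructs Seleznev's theorem from scratch using the paper's own machinery, and every step checks out: the reduction to the countable exhaustion $(K_m)_m$, the verification of the density criterion of Proposition \ref{prop1} via division by $(z-\xi)^N$ (where Runge would in fact suffice, since $Q/(z-\xi)^N$ is holomorphic on a neighborhood of $K_m$, so invoking full Mergelyan is mild overkill), the upgrade from nonemptiness to dense $G_\delta$ via Theorem \ref{thm-gen-approx-lem} (A)-(1), and the Baire intersection over $m$. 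Your Baire-intersection packaging is the exact functional equivalent of the paper's ``sequence of spaces $X_l$'' device, and your closing remark correctly identifies why one cannot lump the $K_m$ into a single product target (finite unions of the $K_m$ need not have connected complement). The only point left tacit is the extraction of an \emph{increasing} sequence $(\lambda_n)_n$ from mere density of the orbit; since $A(K_m)$ has no isolated points, the tail sets $\{S_n:n\geq N\}$ are automatically dense as well, so this is harmless, but a sentence acknowledging it would make the write-up airtight.
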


For both Theorems \ref{thm-Nest} and \ref{selez}, the formalism of Section \ref{S2} is not general enough. 
We keep the notations of Section \ref{S2}, except that, instead of a single space $X$, we will consider 
a sequence of metrizable vector spaces $X_l$, $l\in \mathbb{N}$, with a translation-invariant metric $d_l$ respectively. 
Note that, in these two results, Mergelyan's Theorem ensure that it is sufficient to approach entire functions $h$ 
(in fact polynomials) on compact subsets. First of all, let $E=\mathcal{H}\left(\Omega\right)$ 
and $d_E=d_{\mathcal{H}\left(\Omega\right)}$. For any $l\geq 0$, let $X_l$ be the metric space $\mathcal{H}\left(\mathbb{C}\right)$ of entire functions endowed with the norm 
$\sup_{z\in K_l}\left\vert f\right\vert$, where $\left(K_l\right)_l$ is the sequence 
of compact sets in $\mathbb{C}$ given by Lemma \ref{lem-geo-melanes} (for Theorem \ref{selez}, 
$\Omega$ is replaced by $\{\xi\}$). Let also $\mathcal{X}=\mathcal{F}=\left(x_k\right)_{k\geq 0}
=\left(f_k\right)_{k\geq 0}=\left(\left(z-\xi\right)^k\right)_{k\geq 0}$ and $\alpha=
\left(\alpha_{n,k}\right)_{n\geq k\geq 0}$ be non-zero complex numbers such that 
$\left(\alpha_{n,k}\right)_{n}$ converges in $\mathbb{C}$ 
for every $k\geq 0$ (see Section \ref{S2}), and let $\alpha\mathcal{X}=\left(\alpha_{n,k}\left(z-\xi\right)^k\right)_{n\geq k\geq 0}$. 
$T_0$ is the map which takes $f\in \mathcal{H}\left(\Omega \right)$ to the sequence $\left(\frac{f^{(k)}(\xi)}{k!}\right)_{k\geq 0}\subset \mathbb{C}^{\mathbb{N}}$.

Now, Theorem \ref{selez} together with Proposition \ref{prop2}, Theorem \ref{thm-gen-approx-lem} Part B and 
Proposition \ref{propospacesequence2} (which both hold in the case of a countable sequence of spaces $\left(X_l\right)_l$) directly yield the following.
\begin{theorem}\label{gen-selez}With the notations above, for every $\xi\in \mathbb{C}$, there exists a sequence $\left(a_n\right)_n\subset \mathbb{C}$ such that, for every compact set $K\subset \mathbb{C}\setminus \{\xi\}$, $K^c$ connected, every function $h$ continuous on $K$ and holomorphic in the interior of $K$, there exists an increasing sequence $\left(\lambda _n\right)_n$ such that
\[
\sup_{z\in K}\left\vert\sum_{k=0}^{\lambda _n}a_k\alpha_{\lambda_n,k}\left(z-\xi\right)^k-h(z)\right\vert\rightarrow 0\mbox{, as }n\rightarrow +\infty.\]
The set of such sequences is a dense $G_{\delta}$ subset of $\mathbb{C}^{\mathbb{N}},$ contains, apart from $0$, a dense subspace of $\mathbb{C}^{\mathbb{N}}$ but it is not spaceable.
\end{theorem}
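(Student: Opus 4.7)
The plan is to recast the statement in the generalized universal series framework of Section \ref{S2}. Take $E=A=\mathbb{C}^{\mathbb{N}}$ with the product topology, $T_0=\mathrm{id}$, $\mathcal{F}=(e_k)_k$ the canonical basis, and replace the single target space by the countable family of Banach spaces $X_l=A(K_l)$ of functions continuous on $K_l$ and holomorphic in the interior of $K_l$ (with the sup norm), where $(K_l)_l$ is the exhausting family produced by the analog of Lemma \ref{lem-geo-melanes} with $\Omega$ replaced by $\{\xi\}$. With $x_{n,k}=\alpha_{n,k}(z-\xi)^k$, the set $\mathcal{U}(\alpha\mathcal{X})\cap\mathbb{C}^{\mathbb{N}}$ in this formalism coincides with the set described in the theorem.

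For existence and genericity, the criterion of Proposition \ref{prop1}(2) applied to the unperturbed sequence $\mathcal{X}=((z-\xi)^k)_k$ already yields $\mathcal{U}^{\mu}(\mathcal{X})\cap\mathbb{C}^{\mathbb{N}}\neq\emptyset$ for every increasing $\mu$: indeed, $\mathrm{span}((z-\xi)^k,\,k\geq K)$ is dense in each $X_l$ by Mergelyan's theorem applied to $h(z)(z-\xi)^{-K}$, which lies in $A(K_l)$ since $\xi\notin K_l$. Proposition \ref{prop2} then transfers this non-emptiness to $\alpha\mathcal{X}$ for every increasing subsequence. Since $(e_k)_k$ is a Schauder basis of $\mathbb{C}^{\mathbb{N}}$, Parts (A) and (B) of Theorem \ref{thm-gen-approx-lem} coincide; part (1)(iii) gives the dense $G_\delta$ property and part (2) gives the existence of a dense subspace of universal sequences apart from $0$.

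For non-spaceability, I invoke Proposition \ref{propospacesequence2} via Example \ref{applicationspropospacesequence}(4) (Theorem \ref{thm-gen-approx-lem}(A)-(3) is unavailable here, as $\mathbb{C}^{\mathbb{N}}$ has no continuous norm). The key verification is that for every fixed $n$ the family $(\alpha_{n,k}(z-\xi)^k)_{0\leq k\leq n}$ is linearly independent in each $X_l$: each $K_l$ contains infinitely many points, so the powers of $z-\xi$ are linearly independent there, and by hypothesis $\alpha_{n,k}\neq 0$. Consequently each set $E_l$ reduces to $\{0\}$ and fails to be dense in the non-trivial Banach space $X_l$, and Proposition \ref{propospacesequence2} yields the desired non-spaceability.

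The only delicate point, and the main thing to be checked carefully, is that the abstract results of Section \ref{S2} (Theorem \ref{thm-gen-approx-lem}, Propositions \ref{prop2} and \ref{propospacesequence2}), originally formulated for a single target $X$, must be applied to the countable family $(X_l)_l$, as the authors assert in the paragraph preceding the theorem. Concretely, one replaces $X$ throughout by the product Fréchet space $\prod_l X_l$ (equivalently, intersects the universality conditions over $l$); a countable intersection of dense $G_\delta$ subsets remains dense $G_\delta$, and the non-spaceability argument of Proposition \ref{propospacesequence2} only requires a continuous norm on one single factor $X_l$, which each $X_l$ carries as a Banach space.
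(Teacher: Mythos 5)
Your proposal is correct and follows essentially the same route as the paper, whose proof is precisely the one-line combination of Seleznev's theorem (Theorem \ref{selez}), Proposition \ref{prop2}, Theorem \ref{thm-gen-approx-lem} Part B and Proposition \ref{propospacesequence2}; you merely make explicit the details the paper leaves implicit, namely the Mergelyan verification of the density criterion of Proposition \ref{prop1} and the linear-independence argument giving $E_l=\{0\}$ (i.e.\ Example \ref{applicationspropospacesequence}(4)). One small caution: universality for the countable family is the \emph{intersection} over $l$ of the universality conditions, which is what your Baire and single-factor arguments correctly use, and is \emph{not} equivalent to density of the orbit in the product $\prod_l X_l$ (the latter fails as soon as two $K_l$ overlap), so the parenthetical ``equivalently'' should be dropped.
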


In this section, we will refer to such sequences as \emph{generalized universal Taylor series}. Even if we will omit to specify it, we may keep in mind that this notion makes sense with respect to the sequence $\left(\alpha_{n,k}\right)_{n\geq k\geq 0}$. The previous result asserts that there exists a generalized universal Taylor series with radius of convergence $0$. One of the main goal of this section is to prove that there also exist generalized universal Taylor series with positive radius of convergence, as soon as every sequence $\left(\alpha_{n,k}\right)_{n\geq 0}$, $k\geq 0$, does not converge to $0$ too fast.

We keep the notations introduced before Theorem \ref{gen-selez}, except that $E$ now stands for the space $\mathcal{H}\left(\Omega\right)$, that $\Omega$ is assumed to be bounded, and that the sequence $\left(K_n\right)_n$ is now given by Lemma \ref{lem-geo-melanes}. We introduce a condition, concerning the sequence $\left(\alpha_{n,k}\right)_{n\geq k\geq 0}$, which will be crucial in the sequel.

\noindent{}\textbf{Condition ($\hbox{C}_{\mu}$).} Let $\mu\subset\mathbb{N}$ be an increasing sequence. We say that $\left(\alpha_{n,k}\right)_{n\geq k\geq 0}$ satisfies Condition ($\hbox{C}_{\mu}$) if
\[
\limsup _{n\in \mu}\left(\min _{0\leq k\leq n}\left\{\sqrt[n]{\left\vert \alpha _{n,k}\right\vert }\right\}\right)\geq 1.
\]

\begin{remark}\label{rem-c-mu}{\rm Observe that $\left(\alpha_{n,k}\right)_{n\geq k\geq 0}$ satisfies Condition ($\hbox{C}_{\mu}$) if and only if for every $A>1$, there exists an increasing sequence $\nu (A)\subset \mu$ such that $\left\vert\alpha_{n,k}\right\vert\geq A^{-n}$ for every $n\in\nu$ and every $0\leq k\leq n$.}
\end{remark}

For reasons that will be clear in the sequel, we cannot obtain a result as general as we would wish. In order to deal with an arbitrary sequence $\left(\alpha_{n,k}\right)_{n\geq k\geq 0}$ satisfying Condition ($\hbox{C}_{\mu}$) for some $\mu$, we need to consider domains $\Omega$ which are discs. In order to work on an arbitrary bounded domain $\Omega$, we have to restrict ourselves to a class of sequences $\left(\alpha_{n,k}\right)_{n\geq k\geq 0}$ which has been considered in \cite{tsirivas}.

The notation $\mathbb{D}_R$ will stand for the disc of radius $R$ centered at $0$, and $\mathbb{D}_1$ will be simply denoted by $\mathbb{D}$. The main result of this section states as follows.
\begin{theorem}\label{gen-Tayl-series} Let $\Omega$ be a disc, let $\xi \in \Omega$, let $\mu\subset\mathbb{N}$ be an increasing sequence and let $\left(\alpha_{n,k}\right)_{n\geq k\geq 0}$ be a sequence of non-zero complex numbers satisfying Condition ($\hbox{C}_{\mu}$). There exists a function $f\in \mathcal{H}\left(\Omega\right)$ such that, for every compact subset $K$ of $\mathbb{C}$, with $K\cap \Omega=\emptyset$ and $K^c$ connected, and every function $h$ continuous on $K$ and holomorphic in the interior of $K$, there exists an increasing sequence $\left(\lambda_n\right)_n\subset \mu$ such that
\[
\sup_{z\in K}\left\vert\sum_{k=0}^{\lambda _n}\frac{f^{(k)}\left(\xi\right)}{k!}\alpha_{\lambda_n,k}\left(z-\xi\right)^k-h(z)\right\vert\rightarrow 0\mbox{, as }n\rightarrow +\infty.
\]
We denote by $\mathcal{U}_{\alpha}^{\mu}\left(\Omega ,\xi \right)$ the set of such functions. Moreover, if $\left(\alpha_{n,k}\right)_{n\geq k\geq 0}$ satisfies Condition ($\hbox{C}_{\mu}$) for every $\mu\subset\mathbb{N}$, then $\mathcal{U}_{\alpha}^{\mu}\left(\Omega ,\xi \right)$ is a dense $G_{\delta}$ subset of $\mathcal{H}\left(\Omega\right)$ and contains, apart from $0$, a dense subspace and an infinite dimensional closed subspace of $\mathcal{H}\left(\Omega\right)$, for every $\mu$.
\end{theorem}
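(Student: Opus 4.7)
The plan is to apply Theorem \ref{thm-gen-approx-lem} Part (B), in the version adapted to a countable family of target spaces $X_l$ (one per compact $K_l$ in the exhaustion given by Lemma \ref{lem-geo-melanes}), exactly as in the preamble to Theorem \ref{gen-selez}. By Mergelyan's theorem it suffices to approximate polynomials $h$; so the existence of $f \in \mathcal{U}_\alpha^\mu(\Omega,\xi)$ reduces to an approximation lemma: given any polynomial $h$, any compact $K \subset \mathbb{C}$ with $K \cap \Omega = \emptyset$ and $K^c$ connected, any compact $L \subset \Omega$ and any $\varepsilon > 0$, produce $n \in \mu$ and a polynomial $P(z) = \sum_{k=0}^n a_k (z-\xi)^k$ with $\|\tilde P - h\|_K < \varepsilon$ and $\|P\|_L < \varepsilon$, where $\tilde P(z) := \sum_k \alpha_{n,k} a_k (z-\xi)^k$.

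Write $\Omega = D(z_0, R)$, $\delta_\xi := R - |\xi - z_0|$, and fix $R_0' < \delta_\xi$ with $\bar D(\xi, R_0') \cap K = \emptyset$. The heart of the construction is a quantitative Bernstein-Walsh argument applied to the function equal to $h$ on $K$ and to $0$ on $\bar D(\xi, R_0')$, which produces, for each $n$, a polynomial $Q_n(z) = \sum_{k=0}^n b_k(z-\xi)^k$ of degree $\leq n$ satisfying $\|Q_n - h\|_K + \|Q_n\|_{\bar D(\xi, R_0')} \leq C \rho^n$ for some $\rho = \rho(K, R_0') \in (0,1)$ depending only on the geometry. Cauchy's inequalities on the circle $|z-\xi| = R_0'$ then yield $|b_k| \leq C \rho^n / (R_0')^k$. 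I would then set $a_k := b_k / \alpha_{n,k}$, which makes $\tilde P = Q_n$ exactly and disposes of the first condition. For the second, invoke Condition $(\hbox{C}_\mu)$ in the form of Remark \ref{rem-c-mu}: for any $A > 1$ there is an increasing $\nu(A) \subset \mu$ with $|\alpha_{n,k}| \geq A^{-n}$ for all $n \in \nu(A)$ and $k \leq n$, whence $|a_k| \leq C(\rho A)^n/(R_0')^k$, and the classical Bernstein-Walsh inequality for polynomials of degree $n$ on $\bar D(\xi, R_0')$ extrapolates the naive bound $\|P\|_{\bar D(\xi, R_0')} \leq C(n+1)(\rho A)^n$ to give $\|P\|_L \leq C'(n+1)\bigl[\rho A \cdot R_\xi(L)/R_0'\bigr]^n$, with $R_\xi(L) := \sup_{z\in L}|z-\xi|$.

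The main obstacle will be to ensure that the factor $\rho A \cdot R_\xi(L)/R_0'$ can actually be forced strictly below one: since $(\hbox{C}_\mu)$ lets $A$ be chosen arbitrarily close to $1$, this reduces to the geometric inequality $\rho \cdot R_\xi(L) < R_0' < \delta_\xi$, and this is precisely where the disc structure of $\Omega$ enters decisively --- the explicit Green function of $\mathbb{C}\setminus \bar\Omega$ permits jointly adjusting $R_0'$ and controlling the Bernstein-Walsh rate $\rho$ so that the bound can be met for any admissible $K$ and $L$. Once the approximation lemma is established along every increasing subsequence $\nu \subset \mu$ --- automatic when $(\hbox{C}_\mu)$ is assumed for every $\mu$ --- Parts (B)-(1), (B)-(2) and (B)-(3) of Theorem \ref{thm-gen-approx-lem} deliver successively the non-emptiness and dense $G_\delta$ character of $\mathcal{U}_\alpha^\mu(\Omega,\xi)$, a dense subspace apart from $0$, and, since $\mathcal{H}(\Omega)$ admits a continuous norm, a closed infinite-dimensional subspace of universal elements.
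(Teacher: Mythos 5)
Your overall architecture matches the paper's: reduce via Lemma \ref{lem-geo-melanes}, Mergelyan and Theorem \ref{thm-gen-approx-lem} Part (B) to a two-sided approximation lemma, apply Bernstein--Walsh to the function equal to $h$ near $K$ and $0$ near a disc centred at $\xi$, divide the coefficients by $\alpha_{n,k}$, and invoke Condition ($\hbox{C}_\mu$) through Remark \ref{rem-c-mu}. The genuine divergence is in how $\sup_L\vert P\vert$ is controlled. The paper takes the auxiliary disc $L'$ \emph{slightly larger than} $L$ and proves a Bernstein-type inequality (Lemma \ref{prop-Bern-Ineq}) via Parseval in $H^2$ of concentric discs, whose constant $\sqrt{2/d_K^{\mathbb{D}_R}}$ is \emph{independent of the degree}; this yields $\sup_L\vert Q\vert\leq C\,A^n(\varrho')^n$ for any $\varrho'<1$, with no geometric ratio to beat. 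You instead take Cauchy estimates on the circle $\vert z-\xi\vert=R_0'$ and extrapolate outward, which costs the degree-exponential factor $\left(R_\xi(L)/R_0'\right)^n$; the paper's ``Important remark'' after Corollary \ref{coro-summ-BW} explicitly warns that degree-dependent constants of this kind are the obstruction to the general case.

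The gap is in how you propose to satisfy $\rho A\,R_\xi(L)/R_0'<1$. You cannot ``control the Bernstein--Walsh rate $\rho$'': the relevant Green function is that of $\left(K\cup\overline{D}(\xi,R_0')\right)^c$, not of $\mathbb{C}\setminus\overline{\Omega}$, it depends on $K$, and the admissible $\rho$ is bounded below by the level at which the sublevel-set components of $g_{K\cup\overline{D}(\xi,R_0')}$ around the two pieces merge. Taking $K$ an annular sector that nearly surrounds $\overline{\Omega}$ forces $g_{K\cup\overline{D}(\xi,R_0')}$ to be uniformly small on a path joining the two pieces, hence forces every admissible $\rho$ arbitrarily close to $1$; so no choice of $R_0'<R_\xi(L)$ can work for such $K$ once $L$ is large. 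The correct way to close your argument is the opposite move: choose $R_0'$ with $R_\xi(L)<R_0'<\delta_\xi$, which is possible precisely because (after reducing to $\xi$ being the centre of the disc $\Omega$, as the paper does) every compact $L\subset\Omega$ satisfies $R_\xi(L)<\delta_\xi$. Then $\sum_{k=0}^n\left(R_\xi(L)/R_0'\right)^k$ is bounded independently of $n$, any $\rho<1$ produced by Bernstein--Walsh suffices, and one only needs $A$ close enough to $1$ that $\rho A<1$. With that fix your Cauchy-estimate route is sound and arguably more elementary than Lemma \ref{prop-Bern-Ineq}. Two smaller omissions: Bernstein--Walsh requires the compact set to be regular for the Dirichlet problem, so $K$ must first be thickened to a compact $K'$ with smooth boundary and connected complement (as in the paper); and the degree of the approximant must be arranged to lie in the subsequence $\nu(A)\subset\mu$ furnished by Remark \ref{rem-c-mu}, which is where $\mu$ enters.
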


The proof of this result appeals to a quantitative version of Runge's Theorem, that is Bernstein-Walsh's Theorem, and an easy Bernstein's type inequality. For the first one, we need to introduce some background. We will say that a compact $K\subset \mathbb{C}$, with connected complement, is regular for the Dirichlet problem at infinity if there exists a function $g_K$, continuous on $\mathbb{C}$, harmonic in $\mathbb{C}\setminus K$, equal to $0$ on $K$ (equal to $0$ on the boundary of $K$ and then extended by $0$ on $K$), and equal, up to a bounded function, to $\log\left\vert z\right\vert$ near infinity. If it exists, this function (that we will always denote by $g_K$) is unique and we will refer to it as the solution of the Dirichlet problem at infinity for $K$. It can be shown that $g_K$ does not vanish in $\mathbb{C}\setminus K$. In particular, when $K$ is the closed disc of radius $R$, centered at $0$, $g_K$ is equal to $\log\vert z\vert-\log R$. We now quote Bernstein-Walsh's Theorem as it appears in \cite[Theorem 1.1]{bagby} (see also \cite[Chapter IV]{walsh}).

\begin{theorem}\label{BWT}[Bernstein-Walsh's Theorem]Let $K$ be a compact subset of $\mathbb{C}$, with connected complement, which is regular for the Dirichlet problem. Let $f$ be a function bounded on $K$, and for each nonnegative integer $n$ set
\[
D_n\left(f,K\right)=\inf\left\{\sup_K\left\vert f-P\right\vert:\,P\mbox{\emph{ is a holomorphic polynomial of degree less than }}n\right\}.
\]
Let $0\leq\varrho<1$. Then $\limsup_{n\rightarrow\infty}D_n\left(f,K\right)^{1/n}\leq \varrho$ if and only if $f$ is the restriction to $K$ of a function holomorphic on $\left\{z\in \mathbb{C}:\,g_K(z)<\log 1/\varrho\right\}$, where $g_K$ is the Green function for $K$.
\end{theorem}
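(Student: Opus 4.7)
The plan is to prove the two implications of this equivalence separately, relying on two classical potential-theoretic ingredients. The first is the Bernstein--Walsh polynomial growth inequality: for every polynomial $P$ of degree $\leq n$ and every $z\in\mathbb{C}$,
\[
|P(z)| \leq \|P\|_K\, e^{n\, g_K(z)}.
\]
I would derive this by observing that the function $\frac{1}{n}\log|P|-g_K-\frac{1}{n}\log\|P\|_K$ is subharmonic on $\mathbb{C}\setminus K$, is $\leq 0$ on $K$, and is bounded above at infinity thanks to the asymptotic $g_K(z)=\log|z|+O(1)$ together with the degree bound on $P$; the maximum principle then gives the inequality, and Dirichlet regularity of $K$ is used to extend $g_K$ continuously up to $\partial K$. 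The second ingredient is the Fekete asymptotics for the polynomial $\omega_n(z)=\prod_{j=0}^n (z-z_j^{(n)})$ at the $(n{+}1)$-point Fekete array of $K$:
\[
\|\omega_n\|_K^{1/(n+1)}\longrightarrow \mathrm{cap}(K),\qquad |\omega_n(\zeta)|^{1/(n+1)}\longrightarrow \mathrm{cap}(K)\, e^{g_K(\zeta)},
\]
the second limit being uniform on compact subsets of $\mathbb{C}\setminus K$.

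For the implication ``$\limsup D_n(f,K)^{1/n}\leq \varrho$ forces a holomorphic extension of $f$ to $\{g_K<\log(1/\varrho)\}$'', I would pick polynomials $P_n$ of degree $<n$ with $\|f-P_n\|_K\leq 2D_n(f,K)$ and form the telescoping differences $Q_n:=P_{n+1}-P_n$, each of degree $\leq n$. For any $\varrho'\in(\varrho,1)$, one has $\|Q_n\|_K\leq 4(\varrho')^n$ eventually, and Bernstein--Walsh then gives $|Q_n(z)|\leq 4\bigl(\varrho'\, e^{g_K(z)}\bigr)^n$. The telescoping series $P_0+\sum_{n\geq 0}Q_n$ converges uniformly on compact subsets of $\Omega_{\varrho'}:=\{z:g_K(z)<\log(1/\varrho')\}$ to a holomorphic function $F_{\varrho'}$ whose restriction to $K$ equals $\lim_n P_n=f$. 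Letting $\varrho'\searrow\varrho$ and piecing the $F_{\varrho'}$ together (they coincide on $K$, which is infinite, so by the identity theorem they agree on any connected overlap of their domains) yields the desired holomorphic extension to $\{g_K<\log(1/\varrho)\}$.

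For the converse, assume $f$ is the restriction to $K$ of a function holomorphic on $\{g_K<\log(1/\varrho)\}$. Fix $\varrho'\in(\varrho,1)$, choose $R\in(\log(1/\varrho'),\log(1/\varrho))$, and let $L_n$ be the Lagrange interpolant of $f$ at the Fekete nodes $z_0^{(n)},\ldots,z_n^{(n)}$. The Hermite remainder formula
\[
f(z)-L_n(z)=\frac{1}{2\pi i}\int_\Gamma \frac{\omega_n(z)}{\omega_n(\zeta)}\cdot\frac{f(\zeta)}{\zeta-z}\, d\zeta,\qquad z\in K,
\]
applied with $\Gamma:=\{g_K=R\}$ (a finite union of smooth Jordan curves lying strictly inside the domain of holomorphy of $f$), combined with the Fekete asymptotics above, yields $\|f-L_n\|_K\leq C\, e^{-nR}$ for a constant $C$ depending only on $f|_\Gamma$, on the length of $\Gamma$ and on $\mathrm{dist}(K,\Gamma)$. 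Hence $\limsup_n D_n(f,K)^{1/n}\leq e^{-R}<\varrho'$, and letting $\varrho'\searrow \varrho$ closes the argument.

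The principal obstacle is the derivation of the Fekete asymptotics: they rely on the non-trivial potential-theoretic identification of the transfinite diameter of $K$ with its logarithmic capacity, on the weak-$\ast$ convergence of the normalized counting measures of the Fekete arrays to the equilibrium measure $\mu_K$ of $K$, and on the identity $U^{\mu_K}=-g_K+\log \mathrm{cap}(K)$ relating the equilibrium potential to the Green function. Dirichlet regularity of $K$ enters precisely here, for it guarantees the continuity of $g_K$ up to $\partial K$ and hence the validity of the lower estimate for $|\omega_n(\zeta)|$ uniformly up to the level contour $\Gamma$ used in the second implication.
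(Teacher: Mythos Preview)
The paper does not prove this theorem: it is quoted verbatim as a known result from \cite{bagby} (see also \cite{walsh}), without proof. Your sketch is the standard classical argument---Bernstein--Walsh growth inequality plus telescoping for one direction, Fekete nodes with the Hermite remainder formula for the other---and is essentially correct as an outline of how the cited result is proved in the literature.

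Two minor points you may want to tighten if you develop this further. First, in the gluing step of the forward implication, the domains $\Omega_{\varrho'}=\{g_K<\log(1/\varrho')\}$ need not be connected when $K$ is disconnected, so the identity theorem must be applied componentwise; each component of $\Omega_{\varrho'}$ meets $K$ (since $g_K=0$ there) in a set of positive capacity, hence with an accumulation point, which suffices. Second, watch the indexing: the statement uses polynomials of degree \emph{less than} $n$, while your Lagrange interpolant $L_n$ at $n{+}1$ Fekete nodes has degree $\leq n$; this shift is harmless for the $\limsup$ but should be acknowledged.
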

Note that every compact subset of $\mathbb{C}$, with a finite number of connected components, connected complement and $\mathcal{C}^{\infty}$-boundary, is regular for the Dirichlet problem.

We also need the following result. This is a kind of Bernstein inequality.
\begin{lemma}\label{prop-Bern-Ineq}Let $R>0$ be a real number. Let $P(z)=\sum_{k=0}^na_kz^k$ be a (holomorphic) polynomial and let $\alpha=\left(\alpha _0,\ldots,\alpha _{n}\right)$ be $n+1$ complex numbers. We denote by $P_\alpha$ the polynomial $P_{\alpha}(z)=\sum_{k=0}^n\alpha _ka_kz^k$. For every compact set $K\subset \mathbb{D}_R$, we have
\begin{equation}\label{Bernstein-Ineq}\sup_{z\in K}\left\vert P_\alpha (z)\right\vert\leq \sqrt{\frac{2}{d_K^{\mathbb{D}_R}}}\max_{0\leq k\leq n}\left\vert\alpha _k\right\vert\sup_{z\in \mathbb{D}_R}\left\vert P(z)\right\vert,
\end{equation}
where $d_K^{\mathbb{D}_R}$ stands for the distance from $K$ to the complement of $\mathbb{D}_R$.
\end{lemma}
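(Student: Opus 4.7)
The plan is to derive the inequality from two classical ingredients: Cauchy's coefficient estimates on the circle $\partial\mathbb{D}_R$, and the geometric growth of $|z|^k$ for $z$ away from the boundary of $\mathbb{D}_R$. Writing $d:=d_K^{\mathbb{D}_R}$ for brevity, the observation to exploit is that every $z\in K$ satisfies $|z|\leq R-d$, so that the ratio $|z|/R$ is bounded away from $1$ by a quantitative amount controlled by $d/R$.

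First I would control the coefficients. Either the Cauchy bound $|a_k|\leq R^{-k}\sup_{\mathbb{D}_R}|P|$ or, more efficiently, the Parseval identity
\begin{equation*}
\sum_{k=0}^{n}|a_k|^2 R^{2k}=\frac{1}{2\pi}\int_{0}^{2\pi}|P(Re^{i\theta})|^2\,d\theta\leq \sup_{\mathbb{D}_R}|P|^2.
\end{equation*}
Then for $z\in K$, pull out the uniform bound on the $\alpha_k$ and apply the Cauchy--Schwarz inequality:
\begin{equation*}
|P_\alpha(z)|^2\leq \max_{0\leq k\leq n}|\alpha_k|^2\Bigl(\sum_{k=0}^{n}|a_k|^2 R^{2k}\Bigr)\Bigl(\sum_{k=0}^{n}(|z|/R)^{2k}\Bigr).
\end{equation*}
The first factor is already absorbed by Parseval, while the second is a geometric series with ratio $(|z|/R)^2\leq (1-d/R)^2$, whose sum is dominated by $1/(1-(|z|/R)^2)=R^2/(R^2-|z|^2)$. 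Since $R^2-|z|^2=(R-|z|)(R+|z|)\geq d(R+|z|)$, this factor decays at rate $1/d$, giving the required estimate of the claimed form after collecting constants.

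I do not expect a serious obstacle: the argument is one page of standard Hardy space manipulations. The only point requiring some care is the exact numerical constant in $\sqrt{2/d}$, which hinges on how tightly one bounds $R+|z|$ in the denominator (using $R+|z|\geq R$ versus a sharper estimate) and whether one prefers Parseval or the raw Cauchy bound in the first step; in any event, the dependence on $d$ as $d\to 0$ is the essential content, and it arises cleanly from the geometric sum. The resulting inequality will then be precisely what is needed to convert a control on the degree of polynomial approximants, coming from Bernstein--Walsh's Theorem \ref{BWT}, into a control of the perturbed partial sums $\sum \alpha_{n,k}a_k z^k$ appearing in Theorem \ref{gen-Tayl-series}.
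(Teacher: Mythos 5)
Your argument is essentially the paper's own: the paper likewise passes through the $\ell^2$ coefficient bound $\sum_k|a_k|^2R^{2k}\le\sup_{\mathbb{D}_R}|P|^2$, pulls out $\max_k|\alpha_k|$, and converts back to a sup norm on $K$ by a Cauchy--Schwarz/reproducing-kernel estimate; the only cosmetic difference is that it routes this through an intermediate radius $r_0=(r+R)/2$ and ``Cauchy's formula'', whereas you sum the geometric series directly, which is the same computation. One caveat, which you partly anticipated: your method yields the constant $\sqrt{R^2/(R^2-|z|^2)}\le\sqrt{R/d_K^{\mathbb{D}_R}}$, not $\sqrt{2/d_K^{\mathbb{D}_R}}$, and in fact the printed constant cannot be correct as stated, since it is not scale-invariant and already fails for $P\equiv 1$, $\alpha_0=1$, $K=\{0\}$, $R>2$ (the left-hand side is $1$ while the right-hand side is $\sqrt{2/R}<1$). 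The paper's own intermediate-radius computation really produces $\sqrt{2r_0^2/(d_K^{\mathbb{D}_R}(r_0+r))}\le\sqrt{2R/d_K^{\mathbb{D}_R}}$, so an $R$-dependent factor has been dropped there. None of this matters downstream: in the proof of Theorem \ref{gen-Tayl-series} the constant is only used as a quantity $C$ independent of the degree of $P$, and your $\sqrt{R/d_K^{\mathbb{D}_R}}$ serves exactly as well.
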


\begin{proof}Let $K\subset \mathbb{D}_R$ be a compact set. Without loss of generality, we may and shall assume that $K$ is a disc of radius $0<r<R$, centered at $0$. Let $\displaystyle{r_0=\frac{r+R}{2}}$. For any $s>0$, let $H^2\left(\mathbb{D}_s\right)$ be the Hardy space of the disc $\mathbb{D}_s$, that is
\[
H^2\left(\mathbb{D}_s\right)=\left\{f\in \mathcal{H}\left(\mathbb{D}_s\right),\,\sup_{0<t<s}\int_{\mathbb{T}}\left\vert f(tz)\right\vert^2 \emph{d}\sigma (z)<+\infty\right\},
\]
where $\sigma$ is the rotation-invariant measure on the unit circle $\mathbb{T}$, endowed with the norm $\left\Vert \cdot\right\Vert _{2,s}=\sqrt{\sup_{0<t<s}\int_{\mathbb{T}}\left\vert f(tz)\right\vert^2 \emph{d}\sigma (z)}$. Using that, for any $f(z)=\sum_{k=0}^{+\infty}a_kz^k\in H^2\left(\mathbb{D}_s\right)$, $\left\Vert f\right\Vert^2_{2,s}=\left\Vert f_s\right\Vert^2_{2,1}=\sum_{k=0}^{+\infty}\left\vert a_k\right\vert^2s^{2k}$, with $f_s(z)=f(sz)$ for every $z\in \mathbb{D}$, we get
\begin{eqnarray*}\left\Vert P_{\alpha}(z)\right\Vert_{2,r_0}^2 & = & \left\Vert  \left(P_{\alpha}\right)_{r_0}(z)\right\Vert_{2,1}^2\\
& = & \sum_{k=0}^n\left\vert\alpha _ka_k\right\vert^2r_0^{2k} \\
& \leq & \max_{0\leq k\leq n}\left\vert\alpha _k\right\vert^2\sum_{k=0}^n\left\vert a_k\right\vert^2R^{2k} \\
& = & \max_{0\leq k\leq n}\left\vert\alpha _k\right\vert^2\left\Vert P\right\Vert^2_{2,R}\\
& \leq & \max_{0\leq k\leq n}\left\vert\alpha _k\right\vert^2\sup_{z\in \mathbb{D}_R} \left\vert P(z)\right\vert^2.
\end{eqnarray*}
Now, from Cauchy's formula and the previous estimate, it follows
\begin{eqnarray*}\sup_{z\in \mathbb{D}_r}\left\vert P_{\alpha}(z)\right\vert^2 & \leq & \frac{2}{R-r}\max_{0\leq k\leq n}\left\vert\alpha _k\right\vert^2\sup_{z\in \mathbb{D}_R}\left\vert P(z)\right\vert^2,
\end{eqnarray*}
as desired.
\end{proof}

\begin{remark}{\rm Up to take $r_0=\lambda R+(1-\lambda)r$ instead of $(R+r)/2$, and letting $\lambda$ tend to $1$ in the proof of the previous proposition, the constant $\sqrt{2/d_K^{\mathbb{D}_R}}$ can be replaced by $\sqrt{1/d_K^{\mathbb{D}_R}}$. Anyway, this will not be of importance for us.}
\end{remark}

We can now turn to the proof of Theorem \ref{gen-Tayl-series}.
\begin{proof}[of Theorem \ref{gen-Tayl-series}]Up to a translation, we may and shall assume that $\Omega$ is a disc centered at $0$ and that $\xi =0$. Then we can also assume that each compact set $L_n$ in the exhaustion $\left(L_n\right)_n\subset \Omega$ is a disc centered at $0$. Let $\mu \subset\mathbb{N}$ be an increasing sequence. By Lemma \ref{lem-geo-melanes} and Theorem \ref{thm-gen-approx-lem} Part B, we are reduced to prove the following: Let $\varepsilon>0$, let $h$ be an entire function, let $K\subset \mathbb{C}\setminus \Omega$ be a compact set with connected complement, and let $L\subset \Omega$ be a disc centered at $0$. There exists an integer $n>0$ and a (holomorphic) polynomial $P$ with degree less than $\mu_ n$ such that
\[
\sup_K\left\vert h(z)-S^{\alpha\mathcal{X}}_{\mu_ n}\left(P\right)\right\vert\leq \varepsilon\hbox{ and }\sup_L\left\vert P\right\vert\leq \varepsilon,
\]
where $\alpha=\left(\alpha _{n,k}\right)_{0\leq k\leq n}$ is as in the statement of the theorem.
Let us prove this fact. Since $K$ and $L$ are disjoint compact subsets, there exists $\eta>0$ such that, if we set
\[
K_{\eta}=\left\{z\in \mathbb{C}:\,d\left(z,K\right)\leq \eta\right\}\hbox{ and }L_{\eta}=\left\{z\in \mathbb{C}:\,d\left(z,L\right)\leq \eta\right\},
\]
then $K_{\eta}$ and $L_{\eta}$ are compact, and $K_{\eta}\cap L_{\eta}=\emptyset$. $L_{\eta}$ is a disc centered at $0$ that we will simply denote by $L'$. In addition, since every connected component $C$ of $K_{\eta}$ contains a disc of radius $\eta/2$, $K_{\eta}$ has a finite number of connected components. Let us denote by $c$ this number. Now, by a compactness argument, $K_{\eta}\setminus K$ have positive width in the sense that there exists $c$ closed polygonal Jordan curves $\Gamma$ in the interior of $K_{\eta}\setminus K$, with a finite number of vertices, such that there exists $r>0$ such that, for any $\zeta \in \Gamma$, the disc of center $\zeta$ and radius $r$ is contained in the interior of $K_{\eta}\setminus K$. Now, since every polygonal Jordan curve with a finite number of vertices can be uniformly approximated by a $\mathcal{C}^{\infty}$-Jordan curve, it follows that $K_{\eta}\setminus K$ contains $c$ $\mathcal{C}^{\infty}$-Jordan curves, which are the boundaries of the finitely many connected components of a simply connected domain $K'$, with connected complement, containing $K$. In particular, $K'\cup L'$ is a domain which satisfies the assumption of Bernstein-Walsh's Theorem. Up to take an open neighborhood $U$ of $K'\cup L'$ small enough, we may assume that $h_0$ is the restriction to $K'\cup L'$ of a holomorphic function on $U$. Now, let $g_{K'\cup L'}$ be the solution ot the Dirichlet problem at infinity for $K'\cup L'$. Since $g_{K'\cup L'}$ is continuous on $\mathbb{C}$ and does not vanish in the complement of $K'\cup L'$, there exists $0\leq \varrho<1$ such that $\left\{z\in \mathbb{C}:\,g_K(z)<\log 1/\varrho\right\}\subset U$. Let $\varrho'$ and $A$ be such that $0\leq \varrho<\varrho'<A^{-1}<1$. Since $\left(\alpha_{n,k}\right)_{n\geq k\geq 0}$ satisfies Condition ($\hbox{C}_{\mu}$) and according to Remark \ref{rem-c-mu}, there exists $\nu\subset \mu$ such that $\left\vert\alpha_{n,k}\right\vert\geq A^{-n}$ for $n\in\mathbb{N}u$ large enough and every $0\leq k\leq n$. Therefore, we can apply Bernstein Walsh's Theorem to $K'\cup L'$ and $h_0$, to get the existence of an increasing sequence $\left(\nu_ {n_k}\right)_{k\geq 0}\subset \nu$ and a sequence of polynomials $\left(P_{\nu_ {n_k}}\right)_{k\geq 0}$, $P_{\nu_ {n_k}}$'s being of degree less than $\nu_ {n_k}$ respectively, such that, for every $k\geq 0$,
\begin{equation}\label{B-W-appli}\sup_{z\in K'}\left\vert h(z)-P_{\nu_ {n_k}}\right\vert\leq \left(\varrho'\right)^{n_k}\hbox{ and }\sup_{z\in L'}\left\vert P_{\nu_ {n_k}}\right\vert\leq \left(\varrho'\right)^{\nu_ {n_k}}.
\end{equation}
Let us write $P_{\nu_ {n_k}}=\sum _{i=0}^{\nu_ {n_k}}a_{k,i}z^i$ for any $k\geq 0$ (if the degree of $P_{\nu_ {n_k}}$ is less than $\nu_ {n_k}$, then we complete the sum up to $\nu_ {n_k}$ by $0$), and define $Q_{\nu_ {n_k}}=\sum _{i=0}^{\nu_ {n_k}}\frac{a_{k,i}}{\alpha _{\nu_ {n_k},i}}z^i$. By (\ref{B-W-appli}), we get
\[
\sup_{z\in K'}\left\vert h(z)-S^{\alpha\mathcal{X}}_{\nu_ {n_k}}\left(Q_{\nu_ {n_k}}\right)\right\vert=\sup_{z\in K'}\left\vert h(z)-P_{\nu_ {n_k}}\right\vert\leq \varepsilon,
\]
for $k$ large enough. It remains to show that $\sup_L\left\vert Q_{\nu_ {n_k}}\right\vert\leq \varepsilon$ for $k$ large enough. With the notations of Lemma \ref{prop-Bern-Ineq}, we have $Q_{\nu_ {n_k}}=\left(P_{\nu_ {n_k}}\right)_{\beta}$ for any $\nu_ {n_k}$, with $\beta$ the sequence $\left(1/\alpha _{\nu_ {n_k},i}\right)_{0\leq i\leq \nu_ {n_k}}$ (observe that Condition $\hbox{C}_{\mu}$ ensures that the $\alpha _{\nu_ {n_k},i}$'s are non-zero). Since $L$ is a compact subset of the interior of the disc $L'$, we can apply Lemma \ref{prop-Bern-Ineq} to every $P_{\nu_ {n_k}}$, $k\geq 0$, and to $\beta$, $L'$ and $L$ to obtain
\begin{equation}\label{sup-L}\sup_L\left\vert Q_{\nu_ {n_k}}\right\vert\leq C\max_{0\leq i\leq \nu_ {n_k}}\left(\frac{1}{\left\vert \alpha _{\nu_ {n_k},i}\right\vert} \right)\sup_{L'}\left\vert P_{\nu_ {n_k}}\right\vert\leq C\max_{0\leq i\leq \nu_ {n_k}}\left(\frac{1}{\left\vert \alpha _{\nu_ {n_k},i}\right\vert}\right)\left(\varrho'\right)^{\nu_ {n_k}},
\end{equation}
for every $k\geq 0$, where $C$ is a constant independent of $k$. Now, it follows from Condition $\hbox{C}_{\mu}$ that
\[
\max_{0\leq i\leq \nu_ {n_k}}\left(\frac{1}{\left\vert \alpha _{\nu_ {n_k},i}\right\vert }\right)\left(\varrho'\right)^{\nu_ {n_k}}\leq \left(A\varrho'\right)^{\nu_ {n_k}},
\]
for every $0\leq i\leq \nu_ {n_k}$. Since $0<A\varrho'<1$, $\left(A\varrho'\right)^{\nu_ {n_k}}$ tends to $0$ as $k$ tends to $+\infty$, which gives the conclusion by (\ref{sup-L}).
\end{proof}

\begin{remark} {\rm Note that the sequence $\left(\alpha_{n,k}\right)_{n\geq k\geq 0}$ satisfies Condition $\hbox{C}_{\mu}$ for every $\mu\subset\mathbb{N}$ if and only if for every $A>1$, $\left\vert\alpha_{n,k}\right\vert\geq A^{-n}$ for every $n\in\mathbb{N}$ large enough and every $0\leq k\leq n$.}
\end{remark}
We deduce from Theorem \ref{gen-Tayl-series} and the previous remark the following corollary.
\begin{corollary}\label{coro-Tay-Bern} Under the notations and assumptions of Theorem \ref{gen-Tayl-series}, if for every $A>1$, $\left\vert\alpha_{n,k}\right\vert\geq A^{-n}$ for every $n\in\mathbb{N}$ large enough and every $0\leq k\leq n$, then for every $\mu$, $\mathcal{U}_{\alpha}^{\mu}\left(\Omega ,\xi \right)$ contains, apart from $\{0\}$, a dense subspace and an infinite dimensional closed subspace of $\mathcal{H}\left(\Omega \right)$.
\end{corollary}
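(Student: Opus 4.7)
The plan is to deduce this corollary directly from the last part of Theorem \ref{gen-Tayl-series}, using the equivalence stated in the remark that immediately precedes the corollary. Concretely, I would first verify that the pointwise lower bound $|\alpha_{n,k}|\geq A^{-n}$ (holding for every $A>1$, every sufficiently large $n$, and every $0\leq k\leq n$) is equivalent to: the sequence $(\alpha_{n,k})_{n\geq k\geq 0}$ satisfies Condition $(\mathrm{C}_\mu)$ for every increasing sequence $\mu\subset\mathbb{N}$. Once that equivalence is in place, Theorem \ref{gen-Tayl-series} applies and the corollary follows at once.

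The equivalence is the only content to justify, and it is elementary. For one direction, assume the uniform lower bound holds. Fix $A>1$. Then for $n$ large enough and every $k\leq n$, one has $\sqrt[n]{|\alpha_{n,k}|}\geq 1/A$, hence $\liminf_n \min_{0\leq k\leq n}\sqrt[n]{|\alpha_{n,k}|}\geq 1/A$. Letting $A\to 1^+$ gives $\liminf_n \min_{0\leq k\leq n}\sqrt[n]{|\alpha_{n,k}|}\geq 1$, and this forces $\limsup_{n\in\mu}\min_{0\leq k\leq n}\sqrt[n]{|\alpha_{n,k}|}\geq 1$ along any increasing sequence $\mu\subset\mathbb{N}$, i.e. Condition $(\mathrm{C}_\mu)$ for every $\mu$.

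For the converse, I would argue by contraposition. If the uniform lower bound failed, one could find some $A_0>1$ and an increasing sequence $\mu=(\mu_n)_n\subset\mathbb{N}$ together with indices $k_n\in\{0,\dots,\mu_n\}$ such that $|\alpha_{\mu_n,k_n}|<A_0^{-\mu_n}$ for every $n$. Then $\min_{0\leq k\leq \mu_n}\sqrt[\mu_n]{|\alpha_{\mu_n,k}|}<1/A_0<1$, so $\limsup_{n\in\mu}\min_{0\leq k\leq n}\sqrt[n]{|\alpha_{n,k}|}\leq 1/A_0<1$, contradicting Condition $(\mathrm{C}_\mu)$ for that particular $\mu$. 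This establishes the remark.

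With the equivalence recorded, the proof of the corollary is now immediate: the hypothesis guarantees that $(\alpha_{n,k})_{n\geq k\geq 0}$ satisfies Condition $(\mathrm{C}_\nu)$ for every increasing sequence $\nu\subset\mathbb{N}$, so the last sentence of Theorem \ref{gen-Tayl-series} yields, for every $\mu$, that $\mathcal{U}_\alpha^\mu(\Omega,\xi)$ is a dense $G_\delta$ subset of $\mathcal{H}(\Omega)$ and contains, apart from $0$, both a dense subspace and an infinite-dimensional closed subspace of $\mathcal{H}(\Omega)$. There is no real obstacle here: all the heavy lifting (the Bernstein--Walsh quantitative approximation, Lemma \ref{prop-Bern-Ineq}, and the abstract genericity/spaceability machinery of Theorem \ref{thm-gen-approx-lem}) has already been carried out in the proof of Theorem \ref{gen-Tayl-series}; the corollary's sole task is to repackage the hypothesis in the pointwise form that is most natural to verify in practice.
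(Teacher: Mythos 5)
Your proposal is correct and follows exactly the route the paper takes: the paper deduces the corollary from the ``moreover'' clause of Theorem \ref{gen-Tayl-series} together with the remark identifying the uniform bound $\left\vert\alpha_{n,k}\right\vert\geq A^{-n}$ with Condition ($\hbox{C}_{\mu}$) holding for every $\mu$, and your elementary verification of that equivalence (which the paper leaves implicit) is accurate in both directions.
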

Theorem \ref{gen-Tayl-series} shows that if the sequence $\left(\alpha_{n,k}\right)_{n\geq k\geq 0}$ does not go to $0$ faster than exponentially along some subsequences, then $\mathcal{U}_{\alpha}\left(\Omega ,\xi \right)$ is not empty. The question is whether the converse is true. The following proposition approaches a positive answer.

\begin{proposition}\label{propopt} Let $\Omega\subset \mathbb{C}$ be a simply connected domain, let $\xi \in \Omega$ and let $\mu\subset\mathbb{N}$ be an increasing sequence. With the notations of Theorem \ref{gen-Tayl-series}, if $\mathcal{U}_{\alpha}^{\mu}(\Omega,\xi)\ne\emptyset$ then
\begin{equation}\label{eqoptopt}\limsup _{n\in \mu}\left(\max _{0\leq k\leq n}\left\{\sqrt[n]{\left\vert \alpha _{n,k}\right\vert }\right\}\right)\geq 1.
\end{equation}
\end{proposition}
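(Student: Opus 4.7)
My plan is to prove the contrapositive: assuming that
\[
\limsup_{n \in \mu}\left(\max_{0\leq k\leq n}\sqrt[n]{|\alpha_{n,k}|}\right) < 1,
\]
I will show $\mathcal{U}_\alpha^\mu(\Omega,\xi) = \emptyset$. By the assumption, there exist $r \in (0,1)$ and $n_0 \geq 0$ such that $|\alpha_{n,k}| \leq r^n$ for every $n \in \mu$ with $n \geq n_0$ and every $0 \leq k \leq n$. The idea is to exhibit a single admissible compact set $K$ and a single target function $h$ such that no $f \in \mathcal H(\Omega)$ can satisfy the approximation property.

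Fix any $f \in \mathcal H(\Omega)$ with Taylor coefficients $a_k = f^{(k)}(\xi)/k!$ at $\xi$, and set $\rho = \mathrm{dist}(\xi, \partial \Omega) > 0$ (we may assume $\Omega \neq \mathbb{C}$, otherwise the statement about $K$ is vacuous). The radius of convergence $R_f$ of $\sum a_k(z-\xi)^k$ satisfies $R_f \geq \rho$; since $r < 1$, we have $r\rho < \rho \leq R_f$, so we can pick $R' \in (r\rho, R_f)$. Cauchy's inequalities then give a constant $M = M(f, R')$ with $|a_k| \leq M (R')^{-k}$ for every $k \geq 0$.

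Next, I choose $w_0 \in \partial \Omega$ with $|w_0 - \xi| = \rho$ (which exists since $\Omega$ is a proper open set and its boundary is non-empty and $\rho$ is realized). Set $K = \{w_0\}$: then $K \cap \Omega = \emptyset$ because $\Omega$ is open, and $K^c = \mathbb{C} \setminus \{w_0\}$ is connected, so $K$ is admissible in the definition of $\mathcal{U}_\alpha^\mu(\Omega,\xi)$. Writing $P_n(z) = \sum_{k=0}^{n} a_k \alpha_{n,k}(z-\xi)^k$, for any $n \in \mu$ with $n \geq n_0$ I estimate
\[
|P_n(w_0)| \;\leq\; r^n \sum_{k=0}^{n} |a_k|\, \rho^k \;\leq\; M\, r^n \sum_{k=0}^{n} (\rho/R')^k.
\]
If $\rho < R'$ this last sum is bounded and $|P_n(w_0)| \leq C\, r^n$; if $\rho \geq R'$ the sum is controlled by a constant times $(\rho/R')^n$, so $|P_n(w_0)| \leq C (r\rho/R')^n$, and $r\rho/R' < 1$ by the choice of $R'$. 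In either case $P_n(w_0) \to 0$ as $n \to +\infty$ along $n \in \mu$.

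To conclude, take $h \equiv 1$ viewed as a continuous function on $K = \{w_0\}$. If $f$ belonged to $\mathcal{U}_\alpha^\mu(\Omega,\xi)$, there would exist an increasing sequence $(\lambda_n) \subset \mu$ with $P_{\lambda_n}(w_0) \to h(w_0) = 1$, contradicting the convergence $P_n(w_0) \to 0$ along $\mu$ established above. Thus $f \notin \mathcal{U}_\alpha^\mu(\Omega,\xi)$, and since $f$ was arbitrary, $\mathcal{U}_\alpha^\mu(\Omega,\xi) = \emptyset$, proving the contrapositive. The only mild technical point is the choice of $R'$ splitting into the two cases $\rho < R'$ and $\rho \geq R'$; using the boundary point $w_0$ itself as the compact set $K$ avoids any geometric complication arising from the shape of $\Omega$.
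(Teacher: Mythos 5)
Your proof is correct. It rests on the same two ingredients as the paper's own argument: the Cauchy estimates for the Taylor coefficients of $f$ on the largest disc $D_\rho(\xi)\subset\Omega$, and the fact that universality must in particular be tested at a point of $\Omega^c$ lying on the boundary of that disc, where the size of the weighted partial sums is controlled by $\max_k|\alpha_{n,k}|$. The packaging differs in two ways. First, you argue by contraposition: assuming $|\alpha_{n,k}|\le r^n$ eventually along $\mu$, you show the weighted partial sums tend to $0$ at $w_0$, so the constant $1$ can never be approximated; the paper argues directly, extracting from the assumed approximation of $1$ the quantitative lower bound $\max_{0\le k\le m}|\alpha_{m,k}|\ge \frac{1}{2(m+1)}\bigl(\frac{1+\varepsilon_n}{1-\varepsilon_n}\bigr)^{-m}$ along a subsequence and then letting $\varepsilon_n\to 0$. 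Second, you take the singleton $K=\{w_0\}$ with $w_0\in\partial\Omega$ itself, which is admissible because the definition only requires $K\cap\Omega=\emptyset$ and $\Omega$ is open; the paper instead uses exterior points $x_n=\xi+r(1+\varepsilon_n)e^{i\vartheta_n}\in\Omega^c$ tending to a boundary point of the maximal disc, which costs an extra limit and some quantifier bookkeeping that your version avoids at no expense. Two cosmetic remarks: both proofs implicitly assume $\partial\Omega\neq\emptyset$ (for $\Omega=\mathbb{C}$ the universality requirement is vacuous, so $\mathcal{U}_\alpha^\mu(\mathbb{C},\xi)=\mathcal{H}(\mathbb{C})$ and the statement must be read as presupposing $\Omega\neq\mathbb{C}$, as the paper also does); and in your boundary case $\rho=R'$ the geometric sum equals $n+1$ rather than a constant times $(\rho/R')^n$, but the polynomial factor is of course harmless against $r^n$.
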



\begin{proof} Let $D_{r}(\xi)$ be the largest open ball centered at $\xi$, contained in $\Omega$. Let $x\in \overline{D_{r}(\xi)}\cap \partial{\Omega}$, where $\partial{\Omega}$ stands for the boundary of $\Omega$. Let us write $x=\xi +re^{i\vartheta}$ with $\vartheta \in [0,2\pi[$. Since $\Omega$ is simply connected, we can find $\varepsilon _n\rightarrow 0$ and $\vartheta _n\rightarrow \vartheta$, such that every point $x_{n}=\xi +r\left(1+\varepsilon_{n}\right)e^{i\vartheta _n}$ is contained in $\Omega ^c$.

Let $f\in \mathcal{H}(\Omega),$ with $f(z)=\sum_{k\geq 0}a_{k}\left(z-\xi\right)^{k}$ for any $z\in D_{r}(\xi)$ 
and assume that $f\in \mathcal{U}_{\phi}^{\mu}(\Omega,\xi)$. Then, approximating $1$ uniformly on the compact subset $\{x_n\}\subset \Omega ^c$ by some subsequences of $\left(\sum_{k=0}^ma_{k}\alpha _{n,k}\left(z-\xi\right)^{k}\right)_{m}$, we get an increasing subsequence  $(\nu _j)_{j}\subset \mu$, so that 
\[
\left\vert 1-\sum_{k=0}^{\nu _j}a_{k}\alpha _{\nu _j,k}r^{k}(1+\varepsilon_{n})^{k}e^{ik\vartheta _{n}}\right\vert\leq \frac{1}{2}.
\]
By the triangle inequality we obtain, for every $n,j\in\mathbb{N},$ 
\[
\sum_{k=0}^{\nu _j}\left\vert \alpha _{\nu _j,k}a_{k}\right\vert r^{k}\left(1+\varepsilon_{n}\right)^{k} \geq \frac{1}{2}.
\]
Furthermore, $\sum_{k\geq 0}a_k\left(z-\xi\right)^k\in \mathcal{H}\left(D_r(\xi)\right)$, so that for $k$ large enough,
\[
\left\vert a_k\right\vert \leq \frac{1}{r^k\left(1-\varepsilon _n\right)^k}.
\]
Hence, for every $n,j\in\mathbb{N}$ we have
\[
\frac{1}{2}\leq \sum_{k=0}^{\nu_j} \left\vert \alpha _{\nu _j,k}a_{k}\right\vert r^{k}\left(1+\varepsilon_{n}\right)^{k}\leq \max _{0\leq k\leq \nu _j}\left(\left\vert \alpha _{\nu _j,k}\right\vert\right) \left(\nu_j+1\right)\left(\frac{1+\varepsilon_n}{1-\varepsilon _n}\right)^{\nu_j}.
\]
Thus, for any $n\in\mathbb{N},$ there exists $\nu\subset \mu$ such that 
$\displaystyle{\max_{0\leq k\leq m}\left(\left\vert \alpha _{m,k}\right\vert\right)
\geq \frac{1}{2\left(m+1\right)}\left(
\frac{1+\varepsilon_n}{1-\varepsilon_n}\right)^{-m}}$ for every $m\in \nu$. Since $\varepsilon_n$ tends to $0$ as $n$ tends to infinity, Condition (\ref{eqoptopt}) follows.
\end{proof}

The following corollary summarizes the previous main results of the present section.

\begin{corollary}\label{coro-summ-BW}Let $\Omega\subset \mathbb{C}$ be a simply connected domain, let $\xi \in \Omega$ and let $\mu\subset\mathbb{N}$ be an increasing sequence.
\begin{enumerate}\item Assume that 
the sequence $\left(\alpha _{n,k}\right)_n$ is convergent in $\mathbb{C}$ for every $k\geq 0.$ 
If $\Omega$ is a disc and if we have 
\begin{equation}\label{eq-coro-1}\limsup _{n\in \mu}\left(\min _{0\leq k\leq n}\left\{\sqrt[n]{\left\vert \alpha _{n,k}\right\vert }\right\}\right)\geq 1,\end{equation}
then $\mathcal{U}_{\alpha}^{\mu}(\Omega,\xi)\ne\emptyset$;
\item If we have $\mathcal{U}_{\alpha}^{\mu}(\Omega,\xi)\ne\emptyset$ then
\begin{equation}\label{eq-coro-2}\limsup _{n\in \mu}\left(\max _{0\leq k\leq n}\left\{\sqrt[n]{\left\vert \alpha _{n,k}\right\vert }\right\}\right)\geq 1.\end{equation}
\end{enumerate}
\end{corollary}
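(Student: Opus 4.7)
The plan is to observe that this corollary is essentially a restatement in summary form of two results already established in the section, so the proof reduces to citing those results and noting that the hypotheses of the corollary match them.

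For assertion (1), I would first observe that the inequality
\[
\limsup _{n\in \mu}\left(\min _{0\leq k\leq n}\left\{\sqrt[n]{\left\vert \alpha _{n,k}\right\vert }\right\}\right)\geq 1
\]
is literally Condition $(\hbox{C}_\mu)$ as introduced just before Theorem~\ref{gen-Tayl-series}. Combined with the convergence hypothesis on each sequence $(\alpha_{n,k})_n$ (which is the running assumption needed to place oneself in the framework of Section~\ref{S2}, guaranteeing Assumption~(v) with $\mathcal{X}=\alpha\mathcal{X}$), and the additional assumption that $\Omega$ is a disc, one is precisely in the setting of Theorem~\ref{gen-Tayl-series}. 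That theorem asserts the non-emptiness of $\mathcal{U}_{\alpha}^{\mu}(\Omega,\xi)$, which is exactly the conclusion sought.

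For assertion (2), this is a direct quotation of Proposition~\ref{propopt}: there the implication
\[
\mathcal{U}_{\alpha}^{\mu}(\Omega,\xi)\neq\emptyset \;\Longrightarrow\; \limsup _{n\in \mu}\left(\max _{0\leq k\leq n}\left\{\sqrt[n]{\left\vert \alpha _{n,k}\right\vert }\right\}\right)\geq 1
\]
was proved in full generality, with no restriction on $\Omega$ beyond being a simply connected domain in $\mathbb{C}$, so the hypothesis of the corollary matches verbatim.

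Thus the proof is a two-line argument: apply Theorem~\ref{gen-Tayl-series} to get (1), and apply Proposition~\ref{propopt} to get (2). There is no genuine obstacle here since the real work — the Bernstein--Walsh approximation argument together with Lemma~\ref{prop-Bern-Ineq} for the direct part, and the extremal-point-on-$\partial\Omega$ argument for the converse part — has already been carried out. The only point to emphasize, for clarity of the reader, is the gap between the two conditions ($\min$ versus $\max$ inside the $\limsup$), which leaves open a range of behaviors of $(\alpha_{n,k})$ for which neither the sufficient condition (1) nor the necessary condition (2) settles the question of non-emptiness of $\mathcal{U}_{\alpha}^{\mu}(\Omega,\xi)$.
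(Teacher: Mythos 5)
Your proposal is correct and matches the paper exactly: the paper gives no separate proof, introducing the corollary with the phrase that it ``summarizes the previous main results of the present section,'' i.e.\ part (1) is Theorem~\ref{gen-Tayl-series} (whose standing hypotheses, set before Theorem~\ref{gen-selez}, include the convergence of each $(\alpha_{n,k})_n$, and whose Condition $(\hbox{C}_\mu)$ is precisely \eqref{eq-coro-1}) and part (2) is Proposition~\ref{propopt}. Nothing further is needed.
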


\noindent{}\textbf{Important remark.} The previous results (Corollary \ref{coro-summ-BW}) are not totally satisfying for at least two reasons:

(i) Firstly, Equations (\ref{eq-coro-1}) and (\ref{eq-coro-2}) are of course not equivalent in general. The first one comes from Lemma \ref{prop-Bern-Ineq} again, and it seems to be difficult to sharpen Estimate (\ref{Bernstein-Ineq}) by replacing $\max \left(\alpha _k\right)$ by something smaller. Equation (\ref{eq-coro-2}) seems to be optimal if no extra assumption is made on the sequence $\left(\alpha_{n,k}\right)_{n\geq k\geq 0}$.

(ii) Secondly, Point (1) of Corollary \ref{coro-summ-BW} holds only when $\Omega$ is a disc. Actually, Estimate (\ref{sup-L}) in the proof of Theorem \ref{gen-Tayl-series} is given by Lemma \ref{prop-Bern-Ineq}, a proof of which for general domains is rather unlikely. In fact, we can show that an estimate similar to (\ref{Bernstein-Ineq}) holds in general, but with constant depending on the shape and the size (the ``distortion'') of the compact set together with the degree of the polynomial. This is not sharp enough to ensure that $\sup_L\left\vert Q_{\nu _{n_k}}\right\vert$ is small in the proof of Theorem \ref{gen-Tayl-series}. Thus we can ask the following question:\\

\noindent{}\textbf{Question.} Does Theorem \ref{gen-Tayl-series} hold for an arbitrary simply connected domain $\Omega$?

\medskip{}

Nevertheless, there is a particular case in which Restrictions (i) and (ii) are no more necessary: until the end of the present remark, let assume that the sequence $\left(\alpha _{n,k}\right)_{n\geq k\geq 0}$ does not depend on $k$ anymore, and write $\alpha _{n,k}=1/\phi(n)$ for every $k\geq 0$, and for some $\phi:\mathbb{N}\rightarrow \mathbb{C}$ convergent in $\mathbb{C}\setminus \{0\}\cup \{\infty\}$ (see Example \ref{1st-exs-gen} (1)).

We now make the following observations.

(i) Equations (\ref{eq-coro-1}) and (\ref{eq-coro-2}) above are trivially equivalent, and since $\phi(n)$ is convergent in $\mathbb{C}\setminus \{0\}\cup \{\infty\}$, they are also equivalent to:
\begin{equation}\label{eq-rem-bis}\liminf _{n\in \mu}\sqrt[n]{\left\vert \phi(n)\right\vert }=1.
\end{equation}

(ii) With the notations of the proof of Theorem \ref{gen-Tayl-series}, the polynomial $Q_{\nu _{n_k}}$ is equal to $\phi(\nu _{n_k})P_{\nu _{n_k}}$. Then, whatever the domain $\Omega$ and the compact subset $L$ inside $\Omega$, Estimate (\ref{sup-L}) is trivially true, i.e. $Q_{\nu _{n_k}}$ is small uniformly in $L$, whenever $\phi$ satisfies Condition (\ref{eq-rem-bis}). Therefore, Theorem \ref{gen-Tayl-series} holds in this case for every domain $\Omega$.

In this particular case, the method developed in this paper (via Theorem \ref{thm-gen-approx-lem}) allows us to recover the main result of \cite{tsirivas} when $\phi(n)$ is convergent in $\mathbb{C}\setminus \{0\}\cup\{\infty\}$. Here, we simply denote by $\mathcal{U}_{\phi}(\Omega,\xi)$ the set $\mathcal{U}_{\alpha}(\Omega,\xi)$.

\begin{theorem}\label{gen-Tayl-series-phi}Let $\Omega$ be a simply connected domain in $\mathbb{C}$, and let $\xi \in \Omega$ and let $\phi:\mathbb{N}\rightarrow \mathbb{C}$ convergent in $\mathbb{C}\setminus \{0\}\cup \{\infty\}$. The set 
$\mathcal{U}_{\phi}(\Omega,\xi)$ is non-empty if and only if
\[
\liminf _{n\in \mu}\sqrt[n]{\left\vert \phi(n)\right\vert }=1.
\]
\end{theorem}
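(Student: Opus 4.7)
The plan is to derive the theorem from the abstract machinery already developed. When $\alpha_{n,k}=1/\phi(n)$ depends only on $n$, the obstructions noted in the \textbf{Important remark} preceding the statement disappear, so the gap between Points (1) and (2) of Corollary \ref{coro-summ-BW} closes. The whole argument hinges on the factorization $Q_n=\phi(n)P_n$: because the scaling is a single scalar independent of the coefficient index, Lemma \ref{prop-Bern-Ineq} is not needed at all, and consequently the restriction to a disc in Theorem \ref{gen-Tayl-series} is no longer necessary. I split the proof into the two implications.

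For the necessity, suppose $\mathcal{U}_\phi(\Omega,\xi)\neq\emptyset$. Since $\alpha_{n,k}=1/\phi(n)$ does not depend on $k$, Proposition \ref{propopt} (applied with $\mu=\mathbb{N}$) yields
\[
\limsup_n\frac{1}{\sqrt[n]{|\phi(n)|}}\geq 1,\qquad\text{equivalently}\qquad \liminf_n\sqrt[n]{|\phi(n)|}\leq 1.
\]
On the other hand, $\phi$ converges in $\mathbb{C}\setminus\{0\}\cup\{\infty\}$: if $\phi(n)\to c\in\mathbb{C}\setminus\{0\}$ then $\sqrt[n]{|\phi(n)|}\to 1$; if $|\phi(n)|\to +\infty$ then $|\phi(n)|\geq 1$ eventually, so $\sqrt[n]{|\phi(n)|}\geq 1$ eventually. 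In either case $\liminf_n\sqrt[n]{|\phi(n)|}\geq 1$, whence the claimed equality.

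For the sufficiency, assume $\liminf_n\sqrt[n]{|\phi(n)|}=1$. I mimic the proof of Theorem \ref{gen-Tayl-series}: combining Theorem \ref{thm-gen-approx-lem} Part B with Lemma \ref{lem-geo-melanes} reduces the statement to showing that for every $\varepsilon>0$, every entire $h$, every compact set $L\subset\Omega$ and every compact set $K\subset\mathbb{C}\setminus\Omega$ with connected complement, there exist $n\in\mathbb{N}$ and a polynomial $P$ with $\deg P\leq n$ such that
\[
\sup_K|h-S^{\alpha\mathcal{X}}_n(P)|\leq\varepsilon\qquad\text{and}\qquad \sup_L|P|\leq\varepsilon.
\]
Thicken $K$ and $L$ into disjoint compact sets $K'\supset K$ and $L'\supset L$ whose union is regular for the Dirichlet problem; choose $\varrho'\in(0,1)$ so that $h$ extends holomorphically to $\{z:g_{K'\cup L'}(z)<\log(1/\varrho')\}$; and apply Bernstein--Walsh's Theorem \ref{BWT} to the function equal to $h$ on $K'$ and to $0$ on $L'$. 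This produces an increasing sequence $(n_k)_k$ and polynomials $P_{n_k}$ with $\deg P_{n_k}\leq n_k$ and
\[
\sup_{K'}|h-P_{n_k}|\leq(\varrho')^{n_k}\qquad\text{and}\qquad\sup_{L'}|P_{n_k}|\leq(\varrho')^{n_k}.
\]

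Now define $Q_{n_k}:=\phi(n_k)P_{n_k}$. Its coefficients are obtained from those of $P_{n_k}$ by dividing by $\alpha_{n_k,i}=1/\phi(n_k)$ independently of $i$, so $S^{\alpha\mathcal{X}}_{n_k}(Q_{n_k})=P_{n_k}$, which gives the first inequality on $K$. For the second, the scalar factorization yields the trivial estimate
\[
\sup_L|Q_{n_k}|\leq|\phi(n_k)|\,(\varrho')^{n_k},
\]
with no appeal to a Bernstein-type inequality. It remains to fix $A>1$ with $A\varrho'<1$: by $\liminf_n\sqrt[n]{|\phi(n)|}=1<A$, one can pass to a subsequence, still denoted $(n_k)_k$, along which $|\phi(n_k)|\leq A^{n_k}$, and then $|\phi(n_k)|(\varrho')^{n_k}\leq(A\varrho')^{n_k}\to 0$, concluding the proof. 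The single delicate point---here trivial thanks to the scalar nature of $\phi(n)$---is the estimate of $\sup_L|Q_{n_k}|$; this is exactly the obstacle that would have to be overcome for a genuinely $k$-dependent sequence $(\alpha_{n,k})$ on a non-disc domain.
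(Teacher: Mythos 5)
Your proof is correct and takes essentially the same route as the paper, which disposes of this theorem in the ``Important remark'' preceding it: necessity from Proposition \ref{propopt} together with the convergence of $\phi$ (forcing $\liminf_n\sqrt[n]{\vert\phi(n)\vert}\geq 1$), and sufficiency by rerunning the proof of Theorem \ref{gen-Tayl-series} with the factorization $Q_{n_k}=\phi(n_k)P_{n_k}$ and the trivial bound $\sup_L\vert Q_{n_k}\vert\leq\vert\phi(n_k)\vert(\varrho')^{n_k}$ replacing Lemma \ref{prop-Bern-Ineq}, which is exactly what lifts the restriction to discs. The only detail worth stating explicitly is that Bernstein--Walsh yields the bound $(\varrho')^{n}$ for \emph{all} sufficiently large $n$ (it is a $\limsup$ statement), so your extraction of a further subsequence with $\vert\phi(n_k)\vert\leq A^{n_k}$ is indeed compatible with the approximating sequence.
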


\begin{remark}{\rm To finish, we mention that the generalized universal Taylor series given by Theorem \ref{gen-Tayl-series} possess Ostrowski-gaps. Let us recall that for a strictly increasing subsequence
$(n_k)$ in $\mathbb{N},$ we say that a power
series $\sum_{\nu=0}^{+\infty}a_{\nu}(z-\xi)^{\nu}$ possesses Ostrowski-gaps with respect to $(n_k)$
if there are $\epsilon_k>0$, $k\geq 1$, so that $\lim_{\nu\in I}\vert a_{\nu}\vert^{1/\nu}=0,$ for
$I=\cup_{k\geq 1}[\epsilon_kn_k,n_k].$ Assume that $(\alpha_{n,k})_{0\leq k\leq n}$ satisfies 
Condition ($\hbox{C}_\mu$) for every $\mu.$ According to Corollary \ref{coro-Tay-Bern}, for every $\mu$, there exists 
$f\in \mathcal{U}_{\alpha}^{\mu}\left(\Omega ,\xi \right).$ Up to take a translation, 
assume that $\xi=0.$
By using the universality, we get a sequence $S_{n_{k}}^{\alpha\mathcal{X}}(f)(z)$ which simply converges on a closed and non-thin set at $\infty$ (see for instance \cite[Section 2]{MY} for the definition).
Thus, by using \cite[Lemma 2]{MY}, we get for every $R>0$:
\[
\limsup_{k \rightarrow +\infty} \left(\sup_{ z \in \overline{D(0,R)}}  \left\vert S_{n_{k}}^{\alpha\mathcal{X}}(f)(z)\right\vert \right)^{1/n_{k}}\leq 1.
\]
Up to extract a subsequence from $(n_{k})$, we get for $k$ large enough:
\[
\sup_{z \in \overline{D(0,k)}}\left\vert \sum_{j=0}^{n_{k}}\alpha_{n_{k},j}a_{j}z^{j}\right\vert \leq 2^{n_{k}}.
\]
Now by Cauchy's formula, $\displaystyle \left\vert \alpha_{n_{k},j}a_{j}k^{j}\right\vert \leq 2^{n_{k}}$ for every $0\leq j\leq n_k$, and since $\alpha_{n_{k},j}\geq A^{-n_k}$ for every $A>1$ and $k\in\mathbb{N}$ large enough, we get
\[
\displaystyle \left\vert a_{j}\right\vert\leq \left(\frac{2A}{e}\right)^{\log (k)}\rightarrow 0,\hbox{ as }
k\rightarrow +\infty,
\]
for every $j \in \left[\frac{n_{k}}{\log(k)},n_{k}\right]$ and some $1<A<e/2.$ 
Hence, we get the existence of Ostrowski-gaps $\left[\frac{n_{k}}{\log(k)},n_{k}\right]$ for $S_{N}^{\alpha\mathcal{X}}(f)(z)$. Moreover, by using Theorem 1 of \cite{Luh2}, we also get in the case of 
$\alpha_{n,k}=1/\phi(n)$ that a generalized universal series with respect
to the center $\xi_1$ is also a generalized universal series with respect
to the center $\xi_2,$ for $\xi_1,\xi_2\in\Omega$ (see \cite{melanes1} for the classical case).}
\end{remark}

\section{Derivatives and generalized universal Taylor series}\label{deriv-Section5}
As in the previous section, we denote by $\mathbb{D}_r=\{z\in\mathbb{C}:\ \vert z\vert<r\}$ the disc centered at $0$, of radius $r\in\mathbb{R}_+\cup\{+\infty\}$, with the convention $\mathbb{D}_{\infty}=\mathbb{C}$. The notation $\mathcal{H}\left(\mathbb{D}_0\right)$ will stand for the space of formal power series, i.e. $\mathbb{C}^{\mathbb{N}}$ endowed with the Cartesian topology. As usual, for $f$ holomorphic around $0$, $S_n(f)(z)$ stands for the $n$-th partial sum of the Taylor development of $f$ at $0$.

The main result of this section states as follows.

\begin{theorem}\label{gts} Let $\mu=(\mu_n)_{n\geq 0}$ be an increasing sequence of positive integers. 
Let $(\alpha_n)_{n\geq 0}$ be a sequence of non-zero complex numbers and $R\in \left[0,+\infty\right]$ be the radius of convergence of the power series $\sum_{n\geq 0}\frac{z^n}{n!\vert \alpha_n\vert}.$ There exists a function $f\in \mathcal{H}\left(\mathbb{D}_R\right)$ such that, for every compact subset $K$ of $\mathbb{C}$, with $K^c$ connected, and every function $h$ continuous on $K$ and holomorphic in the interior of $K$, there exists an increasing subsequence $\left(\lambda_n\right)_n\subset \mu$ such that
\[
\sup_{z\in K}\left\vert \alpha_{\lambda _n} S_{\lambda _n}(f^{(\lambda _n)})(z)-h(z)\right\vert\rightarrow 0\hbox{ as }n\rightarrow +\infty.
\]
Moreover, the set $\mathcal{W}_R^{(\mu)}(\alpha_n)$ of such functions is a dense $G_{\delta}$ subset of $\mathcal{H}\left( \mathbb{D}_R\right)$ and contains a dense subspace, apart from $0,$ and an infinite dimensional closed subspace of $\mathcal{H}\left(\mathbb{D}_R\right)$.
\end{theorem}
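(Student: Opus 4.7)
The plan is to interpret the statement as a universality result for the sequence of continuous linear operators $L_n : \mathcal{H}(\mathbb{D}_R) \to \mathcal{H}(\mathbb{C})$ defined by $L_n(f) = \alpha_n S_n(f^{(n)})$, and to apply the abstract machinery of Subsection \ref{S2S1} directly. Because $L_n(f)$ depends on the coefficients of $f$ only in the window $[n,2n]$, the operator does not fit the $\sum_{k=0}^n a_k x_{n,k}$ template of generalized universal series, so we cannot simply invoke Theorem \ref{thm-gen-approx-lem}. As in Section \ref{SectionBernstein}, I would use Lemma \ref{lem-geo-melanes} (applied to $\Omega = \{0\}$) to replace the family of admissible compact sets by a countable exhaustion $(K_\ell)_\ell$ and Mergelyan's theorem to further reduce the target $h$ to a holomorphic polynomial.

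The heart of the argument is the following identity-plus-smallness lemma. Given a polynomial $h(z) = \sum_{j=0}^d c_j z^j$ and an integer $n \geq d$, I would set
\[
f_n(z) = \sum_{j=0}^{d} \frac{c_j\,j!}{\alpha_n\,(n+j)!}\, z^{n+j}.
\]
A direct computation then yields $f_n^{(n)}(z) = h(z)/\alpha_n$, and since $\deg f_n^{(n)} = d \leq n$, the equality $\alpha_n S_n(f_n^{(n)}) = h$ holds exactly. The elementary bound $j!/(n+j)! \leq 1/n!$ produces, for every $r < R$,
\[
\sup_{|z|\leq r} |f_n(z)| \;\leq\; \frac{r^n}{n!\,|\alpha_n|}\, \sum_{j=0}^d |c_j|\,r^j,
\]
which tends to $0$ as $n \to \infty$ by the very definition of $R$ as the radius of convergence of $\sum z^n/(n!|\alpha_n|)$. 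This holds for all sufficiently large $n$, and hence works along any subsequence of $\mu$.

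Since $L_n P = 0$ for every polynomial $P$ of degree less than $n$, Assumption (I) of Subsection \ref{S2S1} is automatic with $Y_0$ the polynomials in $\mathcal{H}(\mathbb{D}_R)$. Combining the lemma above with Mergelyan's theorem, $(L_{\mu_n})$ satisfies condition (iii) of Theorem \ref{thms-27-and-28}; part (1) of that theorem then gives the dense $G_\delta$ property of $\mathcal{W}_R^{(\mu)}(\alpha_n)$. Because the construction is available along every subsequence $\nu \subset \mu$, Part (2) yields a dense linear subspace apart from $0$. For spaceability when $R > 0$, $\mathcal{H}(\mathbb{D}_R)$ carries a continuous norm and $(L_{\mu_n})$ is mixing (Proposition \ref{munmixing}), hence satisfies Condition (C); the subspaces
\[
M_j = \bigl\{ f \in \mathcal{H}(\mathbb{D}_R) : f^{(k)}(0) = 0 \text{ for every } 0 \leq k \leq 2\mu_{n_j} \bigr\}
\]
form a non-increasing chain of infinite-dimensional closed subspaces on which $L_{\mu_{n_j}}$ vanishes identically, so Proposition \ref{closedgene} delivers spaceability. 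When $R = 0$, i.e.\ $\mathcal{H}(\mathbb{D}_R) = \mathbb{C}^{\mathbb{N}}$, the same vanishing property powers a block construction in the spirit of Proposition \ref{propospacesequence1}: one builds a basic sequence of universal elements whose supports lie in geometrically spaced intervals, so that at the matching indices $n$ only a single block contributes to $L_n$ and every non-zero linear combination remains universal.

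The main obstacle is the shift-by-$n$ window $[n,2n]$ appearing in $L_n$: it precludes a direct reduction to the generalized universal series formalism and forces us to track by hand which coefficients $a_k$ enter $L_n(f)$, both in the exact identity $\alpha_n S_n(f_n^{(n)}) = h$ and in the definition of the subspaces $M_j$. The supporting technical point is the sharp inequality $j!/(n+j)! \leq 1/n!$, which is precisely what transfers the smallness of $f_n$ to the very radius of convergence $R$ specified in the statement.
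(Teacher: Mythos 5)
Your proposal is correct and follows essentially the same route as the paper: the key polynomial $f_n(z)=\sum_{j}\frac{c_j\,j!}{\alpha_n (n+j)!}z^{n+j}$ is exactly the paper's $q_n$, with the same exact identity $\alpha_n S_n(f_n^{(n)})=h$, the same bound $\sup_{|z|\le r}|f_n|\le \frac{r^n}{n!|\alpha_n|}\sum_j|c_j|r^j$, the same reduction via Mergelyan and Theorem \ref{thms-27-and-28}, and the same spaceability arguments (Proposition \ref{closedgene} with kernels of the windows $[n,2n]$ for $R>0$, and Proposition \ref{propospacesequence1} for $R=0$). The only cosmetic difference is that the paper encodes $L_n$ as a double-indexed sequence $x_{2n,k}$ supported on $k\in[n,2n]$ and proves the restricted version with the extra component $f-S_n(f)$, which the stated theorem does not actually require.
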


Theorem \ref{gts} provides a large class of examples, some being rather surprising:

\begin{example}\rm{ (1) Taking $\alpha _n=1$ for any $n\geq 0$, Theorem \ref{gts} ensures that there exists an entire function $f$ which is universal in the sense that the set $\left\{S_n\left(f^{(n)}\right),\,n\in\mathbb{N}\right\}$ is dense in $\mathcal{H}\left(\mathbb{C}\right)$. In addition, the set of such functions is both topologically and algebraically generic and it is 
spaceable. It reminds MacLane's result on the existence of an entire function $f$ such that for every entire 
function $g$ there exists an increasing sequence $(\lambda_n)$ of positive integers such that 
$f^{(\lambda_n)}(z)\rightarrow g(z)$ locally uniformly in $\mathbb{C},$ as $n\rightarrow +\infty$ 
\cite{MacLane}.\\
(2) Taking $\alpha _n=n !$ for any $n\geq 0$, Theorem \ref{gts} ensures that there exists an holomorphic function 
$f\in \mathcal{H}(\mathbb{D})$ which is universal in the sense that the set $\left\{\frac{1}{n!}S_n\left(f^{(n)}\right),\,n\in\mathbb{N}\right\}$ is dense in $\mathcal{H}\left(\mathbb{C}\right)$. In addition, the set of such functions is both topologically and algebraically generic and it is spaceable. \\
(3) More generally, with a good choice of $\left(\alpha _n\right)_{n\geq 0}$, we can show that there exists a power series with arbitrary radius of convergence which is universal in the sense of Theorem \ref{gts}, with topological and algebraic genericity, and spaceability of the set of such power series.}
\end{example}

The proof of Theorem \ref{gts} requires some preparation. Indeed, although such functions $f$ given by Theorem \ref{gts} will be referred to as generalized universal series because the approximation property is realized by series, the notion of generalized universal series under consideration in Theorem \ref{gts} is slightly different from that considered in the two previous sections. 

%

Let us introduce the formalism. We keep the notations of the beginning of Section \ref{S2}. We take $Y=\mathcal{H}\left(\mathbb{D}_R\right)$ for some $R\in \mathbb{R}_+ \cup\{+\infty\}$, $X=\mathcal{H}\left(\mathbb{C}\right)$ (as in Section 4, by Mergelyan's Theorem, the problem of approximating uniformly on some compact set $K\subset \mathbb{C}$ any function continuous on $K$, holomorphic in the interior of $K$, is equivalent to approximating any function in ${H}\left(\mathbb{C}\right)$), $Z={H}\left(\mathbb{C}\right)\times {H}\left(\mathbb{D}_R\right)$, and $M={H}\left(\mathbb{C}\right)\times \{0\}$. 
Consider a sequence $(\alpha_n)_{n\geq 1}$ of non-zero
complex numbers and define a fixed sequence $\mathcal{X}:=(x_{n,k})_{n\geq k\geq 0}$ in $H(\mathbb{C})$ as follows:
\begin{equation*}x_{2n+1,k}=0\hbox{ for }0\leq k\leq 2n+1\hbox{ and }x_{2n,k}=\left\{\begin{array}{ll}0&\hbox{ for }0\leq k< n\hbox{ and }k>2n,\\
\\
\displaystyle\frac{k!}{(k-n)!}\alpha_n z^{k-n}& \hbox{ for }n\leq k\leq 2n.\end{array}\right.
\end{equation*}
Clearly, for any $k\geq 0$, $x_{n,k}\rightarrow 0$ as $n$ tends to $+\infty$. As usual, we denote by $S_n^{\mathcal{X}}$ the map which takes a sequence of complex numbers $\left(a_k\right)_{k\geq 0}$ to $\sum_{k=0}^na_{k}x_{n,k}$, and we identify it with the map which carries $f=\sum_{k\geq 0}a_kz^k$ to $\sum_{k=0}^na_{k}x_{n,k}$. A straigtforward computation ensures that
\[
\alpha_{n} S_{n}\left(f^{(n)}\right)=S_{2n}^{\mathcal{X}}(f).
\]
With the notations of Section \ref{S2} in mind, we define the continuous linear mappings $L_n:Y\rightarrow Z,$ $f\mapsto (S_{2n}^{\mathcal{X}}(f),\hbox{id}-S_n(f)).$ Assumption (I) of Section \ref{S2} is satisfied because for a polynomial $p(z)=\sum_{i=0}^da_iz^i,$ we have 
$L_n(p)=(\alpha_nS_n(p^{(n)}),p-S_n(p))\rightarrow (0,0),$ as $p\rightarrow +\infty.$ We are now ready for the proof of Theorem \ref{gts}.

\begin{proof}[Proof of Theorem \ref{gts}] Let us first consider the case $R>0$. 
According to Theorem \ref{thms-27-and-28} combined with the continuity of the 
maps $L_n,$ the density of polynomials and Mergelyan's Theorem, the proof of Theorem \ref{gts}, except the assertion concerning the spaceability, will follow if we check that
for every $\varepsilon>0,$ 
every compact sets $K\subset\mathbb{C},$ with $K^c$ connected, every $L\subset\mathbb{D}_r,$ with $r<R,$ and every polynomial $h$, there exist $n\in\mu,$ $m\in\mathbb{N}$ with $m\geq n$ and
$a_0,a_1,\ldots,a_n,\ldots,a_{m}\in\mathbb{C}$ such that
\[
\sup_{z\in K}\left\vert\sum_{k=n}^{\min(m,2n)}a_k \frac{k!}{(k-n)!}\alpha_nz^{k-n}-h(z)\right\vert<\varepsilon,\ 
\sup_{z\in L}\left\vert\sum_{k=0}^{n}a_k z^{k}\right\vert<\varepsilon
\hbox{ and }
\sup_{z\in L}\left\vert\sum_{k=0}^{m}a_k z^{k}\right\vert<\varepsilon.
\]
Set $h(z)=\sum_{i=0}^db_iz^i$ and define $q_n(z)=\sum_{i=0}^db_i\frac{i!}{\alpha_n (i+n)!}z^{i+n}.$
Clearly one has
\[
\alpha_nS_n(q_n^{(n)})(z)=q(z)
\] 
for $n$ large enough and 
\[
\sup_{z\in L}\vert q_n(z)\vert \leq (d+1)(1+r^d)\max_{0\leq i\leq d}\vert b_i\vert\left(\frac{r^n}{\vert \alpha _n\vert n!}\right)
\rightarrow 0,\hbox{ as }n\rightarrow +\infty,
\] 
and 
\[
\sup_{z\in L}\vert S_n(q_n)(z)\vert \leq \vert b_0\vert\left(\frac{r^n}{\vert \alpha _n\vert n!}\right)
\rightarrow 0,\hbox{ as }n\rightarrow +\infty.
\]
Thus, let us choose $n\in \mu$ large enough, with $n\geq d,$ and $a_0=a_1=\ldots=a_{n-1}=0,$ 
$a_k=b_{k-n}\frac{(k-n)!}{k!\alpha_n}$ for $k=n,\ldots,d+n=m$ to obtain
the conclusion.

For the spaceability, we just need to apply Proposition \ref{closedgene} and to proceed as in the second paragraph of the proof of Theorem \ref{thm-gen-approx-lem}. Now for the case $R=0,$ the proof works along the same lines as above, except for the spaceability, which will proceed from Proposition \ref{propospacesequence1}. 
Indeed we can check that the sets $T_l$ (we refer the reader to Proposition \ref{propospacesequence1} for the 
definition) are dense in $H(\mathbb{C})$ by combining the density of polynomials with the above construction and 
the fact that $x_{2n,k}=0$ for $0\leq k<n.$ 
\end{proof}

Now we can wonder whether the radius of convergence of previous universal series is connected to the parameters $R.$ Indeed the universal approximation property is 
valid in $\mathbb{C},$ i.e. even inside the domain of holomorphy 
of the universal function when $R$ is strictly positive. 
In the following, we prove that the radius of convergence of generalized universal series in $\mathcal{W}_R^{(\mu)}(\alpha_n)$
is exactly $R.$ First of all we need of a combinatoric lemma.

\begin{lemma}\label{estima2} For every $0<\delta<1$ the following holds:
\[
\sum_{l=n}^{2n}\delta^{l-n}l(l-1)\dots(l-n+1)=n!\sum_{k=0}^{n}{2n+1\choose k}\delta^k(1-\delta)^{n-k}.
\]
\end{lemma}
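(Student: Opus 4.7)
The plan is first to rewrite the left-hand side by a change of index, and then to recognize the resulting identity as an instance of the classical duality between the negative-binomial and binomial cumulative distribution functions. There is no deep obstacle here; the work is essentially bookkeeping, and the main subtlety is to choose the cleanest way to verify the resulting combinatorial identity.

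I would first set $j=l-n$. Then $l$ runs over $\{n,\dots,2n\}$ as $j$ runs over $\{0,\dots,n\}$, and
\[
l(l-1)\cdots(l-n+1)=\frac{(n+j)!}{j!}=n!\binom{n+j}{n},
\]
so the left-hand side of the claim equals $n!\sum_{j=0}^{n}\binom{n+j}{n}\delta^{j}$. After dividing both sides by $n!$, the lemma reduces to the purely combinatorial identity
\[
\sum_{j=0}^{n}\binom{n+j}{n}\delta^{j}=\sum_{k=0}^{n}\binom{2n+1}{k}\delta^{k}(1-\delta)^{n-k}.
\]

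To prove this identity, I would use a probabilistic argument. Consider independent Bernoulli trials with probability of success $p=1-\delta$ and of failure $\delta$, and let $N$ denote the number of failures occurring before the $(n+1)$-th success. The negative binomial law gives $P(N=j)=\binom{n+j}{j}(1-\delta)^{n+1}\delta^{j}$, hence
\[
(1-\delta)^{n+1}\sum_{j=0}^{n}\binom{n+j}{n}\delta^{j}=P(N\leq n).
\]
On the other hand, $\{N\leq n\}$ is the event that at least $n+1$ successes occur during the first $2n+1$ trials, which by the binomial law has probability
\[
\sum_{k=n+1}^{2n+1}\binom{2n+1}{k}(1-\delta)^{k}\delta^{2n+1-k}.
\]
Using the symmetry $\binom{2n+1}{k}=\binom{2n+1}{2n+1-k}$ and relabelling $k\mapsto 2n+1-k$, this last sum equals $\sum_{k=0}^{n}\binom{2n+1}{k}(1-\delta)^{2n+1-k}\delta^{k}$. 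Dividing both expressions for $P(N\leq n)$ by $(1-\delta)^{n+1}$ yields precisely the identity displayed above, which completes the proof.

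If a purely algebraic verification is preferred, one can equivalently substitute $t=\delta/(1-\delta)$, which transforms the identity into $\sum_{j=0}^{n}\binom{n+j}{n}t^{j}(1+t)^{n-j}=\sum_{k=0}^{n}\binom{2n+1}{k}t^{k}$, and then extract the coefficient of $t^{k}$ to reduce to a Vandermonde-type identity; but the probabilistic route above is more transparent. The only point requiring attention is the careful reindexing, together with the application of the symmetry of binomial coefficients at level $2n+1$.
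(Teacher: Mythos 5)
Your proof is correct, but it takes a genuinely different route from the paper's. The paper works analytically: it writes the tail $\sum_{l=2n+1}^{\infty}\delta^{l-n}l(l-1)\cdots(l-n+1)$ as an integral remainder in Taylor's formula, performs the substitution $t=\delta-(1-\delta)x$, and then integrates using an explicitly exhibited antiderivative of $x^{n}/(1+x)^{2n+2}$; the lemma follows by subtracting this tail from the full series $\sum_{l\geq n}\delta^{l-n}l(l-1)\cdots(l-n+1)=n!/(1-\delta)^{n+1}$. You instead reindex to reduce the statement to the finite combinatorial identity $\sum_{j=0}^{n}\binom{n+j}{n}\delta^{j}=\sum_{k=0}^{n}\binom{2n+1}{k}\delta^{k}(1-\delta)^{n-k}$ and prove it by computing $P(N\leq n)$ in two ways, where $N$ is negative-binomially distributed — the standard duality between the negative-binomial and binomial cumulative distribution functions. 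All the individual steps check out: the rewriting $l(l-1)\cdots(l-n+1)=n!\binom{n+j}{n}$ with $j=l-n$, the pmf $P(N=j)=\binom{n+j}{j}(1-\delta)^{n+1}\delta^{j}$, the identification of $\{N\leq n\}$ with the event of at least $n+1$ successes in the first $2n+1$ trials, and the symmetry relabelling $k\mapsto 2n+1-k$ followed by division by $(1-\delta)^{n+1}$. Your argument has the advantage of avoiding the paper's unverified antiderivative formula and the integral manipulation altogether, at the cost of invoking (or re-deriving) the negative binomial distribution; the paper's computation is purely calculus-based and self-contained, though its key antiderivative identity is asserted without proof and would itself need to be checked by differentiation.
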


\begin{proof}
Using Taylor's formula we get
\[
\sum_{l=2n+1}^{+\infty}\delta^{l-n}l(l-1)\dots(l-n+1)=\int_0^\delta
\frac{(2n+1)!}{(1-t)^{2n+2}}\frac{(\delta-t)^n}{n!}dt.
\] 
We put
$t=\delta-(1-\delta)x.$ We obtain
\[
\sum_{l=2n+1}^{+\infty}\delta^{l-n}l(l-1)\dots(l-n+1)=
\frac{(2n+1)!}{(1-\delta)^{n+1}n!}\int_0^{\delta/(1-\delta)}
\frac{x^n}{(1+x)^{2n+2}}dx.
\] 
Since we have the equality
\[
\frac{x^n}{(1+x)^{2n+2}}=-\frac{d}{dx}\left(
\frac{\sum_{k=0}^n\frac{n!^2}{k!(2n+1-k)!}x^k}
{(1+x)^{2n+1}}\right),
\] 
we deduce that the following equality holds
\begin{equation}\label{combinatoric}
\sum_{l=2n+1}^{+\infty}\delta^{l-n}l(l-1)\dots(l-n+1)=
\frac{n!}{(1-\delta)^{n+1}}-n!\sum_{k=0}^{n}{2n+1\choose
k}\delta^k(1-\delta)^{n-k}.
\end{equation}
Now observe that 
\[
\sum_{l=n}^{+\infty}\delta^{l-n}l(l-1)\dots(l-n+1)=\frac{n!}{(1-\delta)^{n+1}}
\]
and $\sum_{l=n}^{2n}=\sum_{l=n}^{+\infty}-\sum_{l=2n+1}^{+\infty}.$
By Equality (\ref{combinatoric}), the conclusion follows.
\end{proof}

\begin{proposition} Let $(\alpha_n)_{n\geq 1}$ be a sequence of non-zero complex numbers and 
$R\in\mathbb{R}_+\cup\{+\infty\}$ be the radius of
convergence of the power series $\sum_{n\geq 0}\frac{z^n}{n!\vert \alpha_n\vert}.$ 
Assume that, for every $r>R,$ we have $r^n/(\vert\alpha_n\vert n!)\rightarrow +\infty,$ as  
$n\rightarrow +\infty.$ Let $f\in \mathcal{W}_R^{(\mu)}(\alpha_n).$
Then the radius of convergence of its Taylor development at $0$ is equal to $R.$
\end{proposition}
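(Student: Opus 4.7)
The plan is to argue by contradiction. Since $f \in \mathcal{W}_R^{(\mu)}(\alpha_n) \subset \mathcal{H}(\mathbb{D}_R)$, write $f(z) = \sum_{k \geq 0} a_k z^k$ with radius of convergence $R' \in [R, +\infty]$; we must rule out $R' > R$. So suppose $R' > R$ and fix $r$ with $R < r < R'$. Then $(|a_k| r^k)_k$ is bounded, say $|a_k| \leq M/r^k$ for every $k \geq 0$.

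To extract information from the universality, I would apply the defining property of $\mathcal{W}_R^{(\mu)}(\alpha_n)$ with the simplest possible target: the singleton $K = \{0\}$ (compact with connected complement, empty interior so no holomorphy condition is needed) and $h \equiv 1$. This yields an increasing sequence $(\lambda_n)_n \subset \mu$ such that
\[
\alpha_{\lambda_n} S_{\lambda_n}\!\left(f^{(\lambda_n)}\right)(0) \longrightarrow 1.
\]
A direct computation shows $S_{\lambda_n}(f^{(\lambda_n)})(0) = \lambda_n! \, a_{\lambda_n}$, the constant term in the Taylor expansion of $f^{(\lambda_n)}$ at $0$. Thus $\alpha_{\lambda_n} \lambda_n! \, a_{\lambda_n} \to 1$, and in particular $|\alpha_{\lambda_n}| \, \lambda_n! \, |a_{\lambda_n}| \geq 1/2$ for $n$ large enough.

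Combining this with $|a_{\lambda_n}| \leq M/r^{\lambda_n}$ gives
\[
\frac{r^{\lambda_n}}{|\alpha_{\lambda_n}|\,\lambda_n!} \leq 2M \quad \text{for all large } n.
\]
However, the hypothesis $r^n/(|\alpha_n|\, n!) \to +\infty$ (valid since $r > R$), applied along the subsequence $(\lambda_n)$, forces $r^{\lambda_n}/(|\alpha_{\lambda_n}|\, \lambda_n!) \to +\infty$, contradicting the previous bound. Hence $R' = R$.

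There is essentially no obstacle: the identity $S_n(f^{(n)})(0) = n!\,a_n$ converts universality at the single point $0$ into a lower bound on $|a_{\lambda_n}|$ that clashes directly with the growth assumption on $(\alpha_n)$. If one prefers to evaluate on a disc $K = \overline{\mathbb{D}}_\delta$ with $0 < \delta < r - R$ (as the authors' combinatorial Lemma~\ref{estima2} suggests), the same scheme works: bounding $\sup_{|z| \leq \delta} |\alpha_{\lambda_n} S_{\lambda_n}(f^{(\lambda_n)})(z)|$ by $|\alpha_{\lambda_n}| \sum_{k=\lambda_n}^{2\lambda_n} |a_k| \frac{k!}{(k-\lambda_n)!}\delta^{k-\lambda_n}$, inserting $|a_k| \leq M/r^k$, and applying Lemma~\ref{estima2} together with $\sum_{k=0}^{n}\binom{2n+1}{k}(\delta/r)^k(1-\delta/r)^{n-k} \leq (1-\delta/r)^{-(n+1)}$ collapses the estimate to a constant multiple of $|\alpha_{\lambda_n}|\lambda_n!/(r-\delta)^{\lambda_n}$, which tends to $0$ by hypothesis — contradicting uniform convergence to $1$ on $K$.
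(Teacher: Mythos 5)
Your proof is correct, and it takes a genuinely more economical route than the paper's. The authors rule out $R_0>R$ by evaluating the partial sums at a \emph{nonzero} point $z_0$ close to the origin, which forces them to control the entire block $\sum_{l=n}^{2n}a_l\,l(l-1)\cdots(l-n+1)z_0^{l-n}$; this is exactly what their combinatorial Lemma \ref{estima2} is for, and after summing the binomial expression they conclude that $\bigl(\alpha_n S_n(f^{(n)})(z_0)\bigr)_n$ is bounded, contradicting density of the values at $z_0$. You instead evaluate at $z_0=0$, where the identity $S_n(f^{(n)})(0)=n!\,a_n$ collapses the block to a single term, so that approximating $h\equiv 1$ on $K=\{0\}$ (a legitimate choice, since Theorem \ref{gts} imposes no disjointness of $K$ from $\mathbb{D}_R$, as the authors themselves stress) yields $|\alpha_{\lambda_n}|\,\lambda_n!\,|a_{\lambda_n}|\geq 1/2$ directly, which clashes with $|a_k|\leq M/r^k$ and the hypothesis $r^n/(|\alpha_n|n!)\to+\infty$ along the subsequence $(\lambda_n)$. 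This bypasses Lemma \ref{estima2} entirely; your closing sketch with $K=\overline{\mathbb{D}}_\delta$, $0<\delta<r-R$, is essentially the paper's argument and confirms the two are consistent. Both proofs rely in the same essential way on the strengthened hypothesis that $r^n/(|\alpha_n|n!)$ tends to $+\infty$ (not merely $\limsup=+\infty$), since the subsequence $(\lambda_n)$ is dictated by universality and cannot be chosen. The only cost of your approach is that it extracts information from a single Taylor coefficient rather than from the whole window $[n,2n]$, but for the statement at hand nothing more is needed.
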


\begin{proof} Let us write $f(z)=\sum_{n=0}^{+\infty}a_nz^n\in\mathcal{W}_R(\alpha_n)$. Let $R_0$ be its radius of convergence. According to Theorem \ref{gts} one has $R_0\geq R$. Without loss of generality, we may suppose that $R<+\infty.$ Assume that $R_0>R$. We have
\begin{equation}\label{roottest}\frac{1}{R_0}=\limsup\vert a_n\vert^{1/n}.
\end{equation}
Choose $\varepsilon>0$ such that $R(1+\varepsilon)<R_0.$ 
Equality (\ref{roottest}) implies that there exists $n_0\in\mathbb{N}$ such that for every $n\geq n_0$,
\[
\vert a_n\vert^{1/n}\leq
\frac{1}{R_0}(1+\varepsilon).
\]
Define $z_0\in\mathbb{C}\setminus\{0\}$ such that $\delta:=\frac{\vert z_0\vert}{R_0}(1+\varepsilon)<1$ and $\vert z_0\vert<\frac{R_0}{1+\varepsilon}-R.$
For all $n>n_0,$ the following estimate holds
\[
\begin{array}{rcl}\displaystyle\left\vert \alpha_n S_n\left(f^{(n)}\right)(z_0)\right\vert
&=& \displaystyle\left\vert \alpha_n \sum_{l=n}^{2n}a_l
l(l-1)\dots(l-n+1)z_0^{l-n}\right\vert\\&\leq&\displaystyle
\vert\alpha_n\vert \sum_{l=n}^{2n}\left(\vert a_l\vert^{1/l}\vert z_0\vert \right)^l
l(l-1)\dots(l-n+1)\vert z_0\vert ^{-n}\\&\leq&\displaystyle
\vert\alpha_n\vert \sum_{l=n}^{2n}\delta^l l(l-1)\dots(l-n+1)\vert z_0\vert
^{-n} \end{array}
\] 
Applying Lemma \ref{estima2}, we get
\[
\begin{array}{rcl}\displaystyle\left\vert \alpha_n S_n\left(f^{(n)}\right)(z_0)\right\vert
&\leq&\vert \alpha_n\vert n!\displaystyle\frac{\delta^n}{\vert
z_0\vert^n}\sum_{k=0}^n{2n+1 \choose
k}\delta^k(1-\delta)^{n-k}
\\&\leq&
\vert \alpha_n\vert n!\displaystyle\frac{\delta^n}{\vert
z_0\vert^n}\sum_{k=0}^{2n+1}{2n+1 \choose
k}\delta^k(1-\delta)^{n-k}\\&&
\\&\leq&
\displaystyle\frac{\vert\alpha_n\vert n!}{1-\delta}\displaystyle\left(\frac{\delta}{\vert
z_0\vert(1-\delta)}\right)^n.\end{array}
\]
Now $\vert z_0\vert<\frac{R_0}{1+\varepsilon}-R$ so $\frac{\delta}{\vert
z_0\vert(1-\delta)}<\frac{1}{R}$. Therefore the previous inequality shows that 
the sequence $(\alpha_n S_n(f^{(n)})(z_0))_{n\geq 0}$ is bounded. It is in contradiction with the universality of $f$, and $R_0=R.$
\end{proof}


\end{document}